\newcommand{\ep}{\varepsilon}
\newcommand{\bg }{\bar{g}}
\newcommand{\Sp }{\mathbb{S}^{n-1}}
\renewcommand{\a }{\alpha }
\renewcommand{\b }{\beta }
\renewcommand{\d}{\delta }
\newcommand{\D }{\Delta }
\newcommand{\e }{\varepsilon }
\newcommand{\g }{\gamma}
\newcommand{\n }{\nabla }
\newcommand{\Sig }{\Sigma}
\renewcommand{\t }{\theta }
\newcommand{\Oie}{\Omega_{i,\varepsilon}}
\newcommand{\Oue }{\Omega_{1,\varepsilon}}
\newcommand{\Ode }{\Omega_{2,\varepsilon}}
\newcommand{\Ouei }{\Omega_{1,\varepsilon_{i}}}
\newcommand{\bw}{\overline{w}}
\newcommand{\bwj}{\overline{w}^{j}}
\newcommand{\p}{\partial}
\newcommand{\N}{\mathbb{N}}
\newcommand{\NN}[2]{\mathcal{N}_{#1}(#2)}
\renewcommand{\L}[3]{\mathbb{L}_{#1}(#2)\,[#3]}
\newcommand{\Q}[3]{\mathcal{Q}_{#1}(#2)\,(#3)}
\newcommand{\dt}{\partial_{t}}
\newcommand{\intbar}{\mathop{\int\makebox(-13.5,0){\rule[4pt]{.7em}{0.3pt}}%
\kern-6pt}\nolimits}
\newcommand{\be}{\begin{eqnarray}}
\newcommand{\ee}{\end{eqnarray}}
\newcommand{\hs}{\hspace{1cm}}
\newcommand{\R}{\mathbb{R}}
\newcommand{\bigo}[1]{\mathcal{O} \big( #1 \big)}
\newcommand{\nor}[2]{\|{#1}\|_{#2}}
\def\bea{\begin{eqnarray*}}
\def\eea{\end{eqnarray*}}
\def\f{\frac}
\author{Giovanni CATINO$^{a}$ and Lorenzo MAZZIERI$^{a,b}$}
\date{}
\title{\bf Connected sum construction for $\sigma_{k}$-Yamabe metrics}
\begin{document}

\linespread{1.02}

\parindent=0pt

\newtheorem{lem}{Lemma}[section]
\newtheorem{pro}[lem]{Proposition}
\newtheorem{thm}{Theorem}
\newtheorem{rem}[lem]{Remark}
\newtheorem{cor}[lem]{Corollary}
\newtheorem{df}[lem]{Definition}
\newtheorem{claim}[lem]{Claim}
\newtheorem{conj}[lem]{Conjecture}
\newtheorem{ass}[lem]{Assumption}
\numberwithin{equation}{section}
\newtheorem{ackn}{Acknowledgments\!\!}\renewcommand{\theackn}{}

\maketitle

\begin{center}

\

{\small

\noindent $^a$ SISSA - International School for Advanced Studies

Via Beirut 2-4,

I-34014 Trieste - Italy
}

\

{\small

\noindent $^b$ Max-Planck-Institut f\"ur Gravitationsphysik, Albert-Einstein-Institut

Am M\"uhlenberg 1,

D-14476 Golm - Germany

}

\end{center}

\footnotetext[1]{E-mail addresses: catino@sissa.it, mazzieri@sissa.it}

\begin{center}
{\bf Abstract}

\end{center}

In this paper we produce families of Riemannian metrics with positive constant $\sigma_k$-curvature equal to $2^{-k} {n \choose k}$ by performing the connected sum of two given compact {\em non degenerate} $n$--dimensional solutions $(M_1,g_1)$ and $(M_2,g_2)$ of the (positive) $\sigma_k$-Yamabe problem, provided $2 \leq 2k < n$. The problem is equivalent to solve a second order fully nonlinear elliptic equation.

\begin{center}

\noindent{\em Key Words: $\sigma_k$-curvature, fully nonlinear elliptic equations, conformal geometry, connected sum}

\bigskip

\centerline{\bf AMS subject classification:  53C24, 53C20,
53C21, 53C25}

\end{center}

\parindent=0pt

\centerline{}

\vspace{-1cm}


\section{Introduction and statement of the result}\label{s:intro}

In recent years much attention has been given to the study of the Yamabe problem for $\sigma_{k}$--curvature, briefly the $\sigma_{k}$--Yamabe problem. To introduce the analytical formulation, we first recall some background materials from Riemmanian geometry. Given $(M,g)$, a compact Riemannian manifold of dimension $n\geq 3$, we denote respectively by $Ric_g$, $R_g$ the Ricci tensor and the scalar curvature of $(M,g)$. The Schouten tensor of $(M,g)$ is defined as follows  
\begin{eqnarray*}
A_g & := & \tfrac{1}{ n-2} \,\, \big( \, Ric_g \,\, - \,\, \tfrac{1}{2(n-1)} \, R_g g \, \big) \,\, . 
\end{eqnarray*}
If we denote by $\lambda_{1}, \ldots, \lambda_{n}$ the eigenvalues of the symmetric endomorphism $g^{-1}A_{g}$, then the $\sigma_k$-curvature of $(M,g)$ is defined as the $k$-th symmetric elementary function of $\lambda_{1},\ldots,\lambda_{n}$, namely
\begin{eqnarray*}
\sigma_k(g^{-1} A_{g}) \,\,\, := \, \sum_{i_1\, <\,\ldots\,< \, i_k}\lambda_{i_i}\cdot \, \ldots \, \cdot \lambda_{i_k} \,\, \quad \hbox{for $1\leq k \leq n$} \quad \quad  & \hbox{and} & \quad \quad 
\sigma_0 (g^{-1} A_{g}) \,\,\,:= \,\,\,1 .
\end{eqnarray*}
The $\sigma_{k}$--Yamabe problem on $(M,g)$
consists in finding metrics with constant $\sigma_{k}$--curvature in the same conformal class of $g$. The case $k=1$ is the well known Yamabe problem, whose progressive resolution is due to Yamabe \cite{Yamabe}, Trudinger \cite{trudinger}, Aubin \cite{aubin} and Schoen \cite{schoen}. Before presenting the historical overview of the existence results for $k\geq 2$, we need to recall the following notions: a metric $g$ on $M$ is said to be $k$--admissible if it belongs to the $k$--th positive cone $\Gamma^{+}_{k}$, where
$$
g\in\Gamma^{+}_{k}\quad\Longleftrightarrow\quad \sigma_{j}(g^{-1}A_{g})>0\quad\hbox{for}\quad j=1,\ldots,k.
$$
Under the assumption that $g$ is $k$--admissible the $\sigma_{k}$--Yamabe problem has been solved in the case $k=2$, $n=4$ by Chang, Gursky and Yang \cite{cgy1} \cite{cgy2}, for locally conformally flat manifolds by Li and Li \cite{ll} (see also Guan and Wang \cite{gw}), and for $2k>n$ by Gursky and Viaclovsky \cite{gv}. For $2 \leq 2k \leq n$ the problem has been solved by Sheng, Trudinger and Wang \cite{stw} under the extra--hypothesis that the operator is variational. We point out that for 
$k=1,2$ this hypothesis is always fulfilled, whereas for $k\geq 3$ it has been shown in \cite{bg} that this extra assumption is equivalent to the locally conformally flatness. Hence, the (positive) $\sigma_k$--Yamabe problem still remain open for $3 \leq k \leq n/2$ with $(M,g)$ non locally conformally flat. In this optic, our result may eventually be used to produce families of new solutions to this problem showing that it is topologically unobstructed also in the remaining cases. At the end of this section we will give a simple example which will show how to use the connected sum construction to produce a metric in $\Gamma^+_3$ with constant $\sigma_3$--curvature on $(\mathbb{S}^6 \times \mathbb{T}^{2}) \, \sharp \, (\mathbb{S}^6 \times \mathbb{T}^{2})$, which is non locally conformally flat. 

\medskip

To put in perspective our work we briefly recall some results which can be found in literature for connected sum and generalized connected sum of positive scalar curvature metrics, metrics with constant scalar curvature and metrics in the positive cone $\Gamma^+_k$. To fix the notations we recall that the connected sum of two $n$--dimensional Riemannian manifolds $(M_1,g_1)$ and $(M_2,g_2)$ is the topological operation which consists in removing an open ball from 
both $M_1$ and $M_2$ and identifying the leftover boundaries, obtaining a new manifold with possibly different topology. Formally, if $p_i \in M_i$ and for a small enough $\e>0$ we excise the ball $B(p_i, \e)$ from $M_i$, $i=1,2$, the (pointwise) connected sum $M_\e$ of $M_1$ and $M_2$ along $p_1$ and $p_2$ with {\em necksize} $\e$ is the topological manifold defined as
$$
M_{\e}\,\, := \,\, M_{1}\sharp_{\e}M_{2} \,\, = \,\,\left[M_{1}\setminus B(p_{1},\e)\,\cup\, M_{2}\setminus B(p_{2},\e)\right]\big / \sim  \,\, ,
$$
where $\sim$ denotes the identification of the two boundaries $\partial B(p_i,\e)$, $i=1,2$. Of course the new manifold $M_\e$ can be endowed with both a differentiable structure and a metric structure, as it will be explicitly done in Section \ref{s:as}. Even though from a topological point of view the value of the {\em necksize} is forgettable, it will be important to keep track of it when we will deal with the metric structure. The generalized connected sum (or fiber sum) is the same operation where instead of removing tubular neighborhoods of points (i.e., balls), one excises the tubular neighborhood of a submanifold which is embedded in both $M_1$ and $M_2$. 

\medskip

The first issue concerning the interaction between generalized connected sum and the scalar curvature is due to Gromov and Lawson \cite{gl} and Schoen and Yau \cite{sy}. They proved that the generalized connected sum of manifolds with positive scalar curvature metrics performed along submanifolds of codimension at least $3$ can be endowed with a new metric whose scalar curvature is still positive. Later this construction has been extended to the pointwise  connected sum of manifolds carrying $k$--admissible metrics by Guan-Lin-Wang \cite{glw}, under the assumption  $2\leq 2k <n$. As a byproduct of our construction we will be able to reproduce this result, with the additional properties that our metrics have constant $\sigma_k$--curvature and can be chosen as close as desired to the initial metrics $g_1$ and $g_2$. In this sense they may represent a canonical choice among all the possible $k$--admissible metrics on the connected sum manifold. 

\medskip

Concerning the solvability of the Yamabe equation ($k=1$) on the pointwise connected sum of manifolds with constant scalar curvature, we mention the results of Joyce \cite{joyce} for the compact case and Mazzeo, Pollack and Uhlenbeck \cite{mpu} for the non compact case. The generalized connected sum has been treated by the second author in \cite{mazzieri1} and \cite{mazzieri2}. Most part of the geometric features of these issues are common to our construction. 
The main differences come from the analytical nature of the problem. In fact for $k=1$ the equation of interest is a second order semilinear elliptic equation, whereas for $k \geq 2$ the equation becomes fully nonlinear and in general it is not elliptic. To guarantee the ellipticity one has to assume that the (background) metric lies either in the $k$--th positive or in the $k$--th negative cone (for a definition of the $k$--th negative cone $\Gamma^-_k$ see for example \cite{gv2}). Here we just focus on the positive cone case, which for several reasons seems to be the most natural one. In fact the general treatment of the $\sigma_k$--Yamabe problem seems still far to be understood in the negative cone.

\

Before giving the precise statement of our result, we set up the problem and briefly describe the strategy of the proof. Since the aim of our work is to produce metrics with constant positive $\sigma_k$--curvature, it is natural to normalize the constant to be the same as the one of the standard sphere, which is $2^{-k}{n \choose k}$. Hence, we will end up with a family of metrics $\{\widetilde{g}_{\e} \}_\e$ parametrized in terms of the {\em necksize} which satisfy 
\begin{eqnarray}
\label{sigmak=const}
\sigma_k \big( \, \widetilde{g}_\e^{-1}  A_{\widetilde{g}_\e} \big) & = &  2^{-k} \, \hbox{${n \choose k}$} .
\end{eqnarray}
To show the existence of these solutions we start by writing down (see Section \ref{s:as}) an explicit family of approximate solution metrics $\{g_\e \}_\e$ (still parametrized by the {\em necksize}) on $M_\e$. This metrics coincide with $g_i$ on $M_i \setminus B(p_i, \e)$, $i=1,2$, and are close to a model metric on the remaining piece of the connected sum manifold, which in the following will be referred as neck region. The metric which we are going to use as a model in the neck region is described in Section \ref{SCH}. Since it is a complete metric on $\R \times \Sp$ with zero $\sigma_k$--curvature, it yields a natural generalization of the scalar flat Schwarzschild metric. For these reasons we have decided to label it as $\sigma_k$--Schwarzschild metric. The heuristic motivation for choosing this model comes from the fact that for $k=1$ it has been successfully employed in the analogous connected sum constructions for consant scalar curvature metrics, and on the other hand it represents the intrinsic counterpart of the catenoidal neck used in the famous gluing constructions of Kapouleas for constant mean curvature surfaces \cite{k1} \cite{k2}.  

\medskip

The next step in our strategy amounts to look for a suitable correction of the approximate solutions to the desired exact solutions. This will be done by means of a conformal perturbation. At the end it will turn out that for sufficiently small values of the parameter $\e$ such a correction can actually be found together with a very precise control on its size and this will ensure the smooth convergence of the new solutions $\tilde{g}_\e$ to the former metrics $g_i$ on the compact subsets of $M_i \setminus \{p_i\}$, $i=1,2$.

\medskip

Having this picture in mind, we pass now to fix the notations that will be used throughout this paper in order to exploit the conformal perturbative program mentioned above and explained in details in the last part of Section \ref{s:as}. Let $(M,\bar g)$ be a compact smooth $n$--dimensional Riemannian manifold without boundary an let $2 \leq 2k <n$. Taking advantage of this second assumption, we introduce the following formalism for the conformal change 
\begin{eqnarray*}
\bar{g}_u  & := & u^{\frac{4k}{n-2k}}\, \bar{g} ,
\end{eqnarray*}
where the conformal factor $u>0$ is a positive smooth function. In this context $\bar g$ will be referred as the background metric. At a first time the $\sigma_k$--equation for the conformal factor $u$ can be formulated as 
\begin{eqnarray*}
\sigma_k \big(\, \bg_u^{-1} A_{\bg_u} \big) & = &  
2^{-k} \, \hbox{${n \choose k}$}.
\end{eqnarray*}
We recall that the Schouten tensor of $\bg_u$ is related to the one of $A_{\bg}$ by the conformal transformation law
\bea
A_{\bg_{u}} & = & A_{\bg} - 
\tfrac{2k}{n-2k}  u^{-1}{\nabla^2 u}  + 
\tfrac{2kn}{(n-2k)^2} u^{-2}  {du \otimes du}  - 
\tfrac{2k^2}{(n-2k)^2}  u^{-2}  {|du|^2}  \bg  ,
\eea
where $\nabla^2$ and $|\cdot|$ are computed with respect to the background metric $\bg$.
For technical reasons, it is convenient to set
\begin{eqnarray}
\label{endo}
B_{\bg_u}  & := & \tfrac{n-2k}{2k} \,  u^{\frac{2n}{n-2k}} \, \bg_u^{-1} \cdot A_{\bg_u}
\end{eqnarray}
and to reformulate the $\sigma_k$--equation as 
\begin{eqnarray}
\label{eq}
  \mathcal{N}_{\bg} (u)  & := & \sigma_k  \left( B_{\bg_{u}}
  \right)  - \hbox{ ${n \choose k}$} \big( \tfrac{n-2k}{4k} \big)^k u^{\frac{2kn}{n-2k}}  \,\,\,\,   = \,\,\,\,  0 .
\end{eqnarray}
We notice that if two metrics $\bg$ and ${g}$ are related by $\bg=(v/u)^{4k/(n-2k)}{g}$, then the nonlinear operator enjoys the following {\em conformal equivariance property}
\be\label{confequi}
\mathcal{N}_{\bg} \, (u)& = & (v/u)^{-\frac{2kn}{n-2k}} \, \mathcal{N}_{{g}} \, (v).
\ee 
The linearized operator of $\mathcal{N}_{\bg}$ about $u$ is defined as
\be
\label{linope}
\mathbb{L}_{\bg}(u)\,[w]  &:= & \left.
\frac{d}{ds} \right|_{s=0} \mathcal{N}_{\bg}\, (u  + sw).
\ee
This last quantity will play a crucial role in our approach. In fact, as explained in Sections \ref{s:as} and \ref{s:linear}, most part of the analysis in this paper is concerned with the study of the mapping properties of the linearized operator about the approximate solutions $g_\e$'s, that we will write in the form $u_\e^{4k/(n-2k)} \bg$. Here the key point is to provide the linearized operator $\mathbb{L}_{\bg}(u_\e) \, [\,\cdot\,]$ with invertibility and {\em a priori} estimates which are uniform with respect to the parameter $\e$. In fact, if we have this and if the error term $\mathcal{N}_{\bg}(u_\e)$ (which measures the failure of the approximate solutions to be exact solutions) becomes smaller and smaller as $\e \rightarrow 0$, we will be in the position to perform a Newton iteration scheme based on the implicit function theorem, which will finally provide us with a correction $w$ satisfying
$\mathcal{N}_{\bg}(u_\e + w) \, = \,0$. The exact solutions will then be recovered as $\widetilde{g}_\e \,= \, (u_\e + w)^{4k/(n-2k)} \bg$. In order to be able to exploit the linear program (invertibility, {\em a priori} estimates, etc.), it is natural to ask  the linearized operators about the initial metrics to be somehow {\em non degenerate}. The concept of {\em non degeneracy} that we need is made precise in the following 
\begin{df}\label{nondegeneracy}
Let $2 \leq 2k <n$ and suppose that the Riemannan manifold $(M,g)$ is a compact $n$--dimensional and $k$--admissible solution to the (positive) $\sigma_k$--Yamabe problem, in the sense that
$$
g \in \Gamma_k^+ \quad \quad \quad \quad \hbox{and} \quad \quad \quad \quad \NN{g}{1} \,\, = \,\, 0 \quad \quad \hbox{in} \,\, M. 
$$ 
Then $(M,g)$ (as well as the metric $g$) is said to be {\em non degenerate} if
$$  
\mathbb{L}_{g}(1)\,[w]\, = \, 0\quad\hbox{in}\quad M\quad \Longrightarrow \quad w\equiv 0,
$$
where $\L{g}{1}{\,\cdot \,}$ is the linearized operator about the metric $g$.
\end{df}
Our main result reads:
\begin{thm}
\label{main}
Let $(M_{1},g_{1})$ and $(M_{2},g_{2})$ be two compact $n$-dimensional $k$-admissible {\em non degenerate} solutions to the positive $\sigma_k$-Yamabe problem, with $2\leq 2k <n$. Then there exists a positive real number $\e_0 >0$ only depending on $n$, $k$, 
and the $C^2$--norm of the coefficients of 
$g_1$ and $g_2$ such that, for every $\e \in (0, \e_0]$, the connected sum $M_{\e}=M_{1}\sharp_{\e} M_{2}$ can be endowed with a $k$--admissible {\em non degenerate} metric $\widetilde{g}_{\e}$ with constant $\sigma_{k}$--curvature equal to $2^{-k} {n \choose k}$. Moreover $\Vert \widetilde{g}_{\e}- g_{i}\Vert_{C^{r}(K_{i})}\rightarrow 0$ for 
any $r>0$ and 
any compact set $K_{i}\subset M_{i}\setminus\{p_{i}\}$,  the $p_i$'s, $i=1,2$, being the points about which the connect sum is performed.
\end{thm}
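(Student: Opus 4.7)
The plan is to carry out the perturbative scheme outlined in Section \ref{s:as}: first assemble a family of approximate solutions $g_\e = u_\e^{4k/(n-2k)}\bar g$ on $M_\e$ whose $\sigma_k$-curvature is close to $2^{-k}\binom{n}{k}$, then produce a conformal correction $w_\e$, small as $\e\to 0$, such that $u_\e+w_\e$ solves \eqref{eq} exactly. The exact metric will then be $\widetilde g_\e=(u_\e+w_\e)^{4k/(n-2k)}\bar g$.

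For the construction of $g_\e$, the idea is to keep $g_i$ on $M_i\setminus B(p_i,2\e)$ and on an annular neighborhood of each $p_i$ to interpolate conformally with the $\sigma_k$-Schwarzschild model of Section \ref{SCH}, matched across a thin transition region. Because the model has vanishing $\sigma_k$-curvature, the error $\mathcal{N}_{\bar g}(u_\e)$ is supported essentially in the two cut-off regions, and measured in a weighted Hölder norm adapted to the degenerating neck geometry, a weight that treats the neck as an asymptotically cylindrical manifold with a prescribed decay rate on each end, should satisfy $\|\mathcal{N}_{\bar g}(u_\e)\|=O(\e^{\alpha})$ for some $\alpha>0$ depending on $n,k$. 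The admissible exponents are dictated by the indicial analysis of the Schwarzschild-type neck toward its asymptotically Euclidean ends.

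The hardest step is the uniform invertibility of the linearized operator $\mathbb{L}_{\bar g}(u_\e)$. I would first identify the limit operators as $\e\to 0$: away from the neck, using the conformal equivariance \eqref{confequi} one reduces to $\mathbb{L}_{g_i}(1)$ on each $M_i$, which is an elliptic second-order operator since $g_i\in\Gamma_k^+$, and by the non-degeneracy assumption of Definition \ref{nondegeneracy} it is an isomorphism between suitable Hölder spaces on each $M_i$; inside the neck, the limit is the model operator on the full $\sigma_k$-Schwarzschild cylinder $\R\times\Sp$, whose kernel and cokernel structure is analyzed by Fourier decomposition on $\Sp$, determining the indicial roots and dictating the choice of weight. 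A blow-up/contradiction argument then promotes these pointwise statements into a uniform bound $\|w\|\le C\,\|\mathbb{L}_{\bar g}(u_\e)[w]\|$: assume a sequence $\e_j\to 0$ and normalized $w_j$ with $\|w_j\|=1$ and $\|\mathbb{L}_{\bar g}(u_{\e_j})[w_j]\|\to 0$, pass to weak limits on each piece after the appropriate rescaling, and derive a contradiction from the two non-degeneracies together with the weight-driven decay at the neck ends. Standard Fredholm theory then yields a uniformly bounded right inverse.

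With the linear theory in hand, the nonlinear step is a contraction mapping: rewrite \eqref{eq} for $u_\e+w$ as $w=-\mathbb{L}_{\bar g}(u_\e)^{-1}\bigl[\mathcal{N}_{\bar g}(u_\e)+\mathcal{Q}(w)\bigr]$, where $\mathcal{Q}(w)=\mathcal{N}_{\bar g}(u_\e+w)-\mathcal{N}_{\bar g}(u_\e)-\mathbb{L}_{\bar g}(u_\e)[w]$ is the quadratic remainder. The uniform inverse bound together with $\|\mathcal{N}_{\bar g}(u_\e)\|\to 0$ makes this a contraction on a sufficiently small ball in the weighted space, and Banach's fixed point theorem produces a correction $w_\e$ of norm $o(1)$. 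It then remains to verify that $\widetilde g_\e\in\Gamma_k^+$, which follows from continuity of the cone condition applied to the approximate metric $g_\e$ (itself $k$-admissible, since the neck is modeled on $\sigma_k$-Schwarzschild) together with the smallness of $w_\e$ in $C^2$; that $\widetilde g_\e$ is non-degenerate for small $\e$, which is a perturbation statement from the uniform invertibility; and that $\|\widetilde g_\e-g_i\|_{C^{r}(K_i)}\to 0$, which is immediate from the weighted estimate since the weight grows away from the neck. The single most delicate point throughout is the choice of weights: it must simultaneously avoid the indicial roots of the model, make the error term $\mathcal{N}_{\bar g}(u_\e)$ decay in $\e$, and produce a uniform right inverse whose norm does not blow up as the neck pinches off.
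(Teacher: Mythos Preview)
Your outline matches the paper's overall architecture: approximate solutions built from a $\sigma_k$--Schwarzschild neck, weighted H\"older spaces, a blow--up argument for uniform linear estimates, and a fixed--point closure. Two points, however, are not as routine as you make them sound and are precisely where the paper invests its effort.

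\medskip

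\textbf{Removable singularities.} In the blow--up argument, when the points of concentration stay away from the neck you obtain a limit $w_\infty$ solving $\mathbb{L}_{g_1}(1)[\,\cdot\,]=0$ on $M_1\setminus\{p_1\}$, with only a polynomial growth bound $|w_\infty|\le C\,\mathrm{dist}(\cdot,p_1)^{-\mu}$, $0<\mu<n-2$, inherited from the weight. To invoke the non--degeneracy of $(M_1,g_1)$ you must extend $w_\infty$ across $p_1$. For $k=1$ this is classical, but for $k\ge 2$ the linearized operator has coefficients coming from $T_{k-1}(B_{g_u})$ and the removability is not automatic: the paper devotes Lemma~\ref{exp}, Lemma~\ref{removable} and Corollary~\ref{removable_cor} to it, via a refined asymptotic expansion of $u$ and a bootstrap on the angular modes. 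Your sentence ``by the non--degeneracy assumption \ldots it is an isomorphism'' hides this step.

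\medskip

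\textbf{$k$--admissibility of the approximate metric.} You write that $g_\e$ is itself $k$--admissible ``since the neck is modeled on $\sigma_k$--Schwarzschild''. This is false: the $\sigma_k$--Schwarzschild metric has $\sigma_k\equiv 0$, so $g_\Sigma\in\overline{\Gamma}_k^+\cap\Gamma_{k-1}^+$ only, and $g_\e$ is merely $(k-1)$--admissible on the neck (this is the content of Lemma~\ref{SGUAN}). The correct argument for $\widetilde g_\e\in\Gamma_k^+$ is: the exact solution satisfies $\sigma_k(\widetilde g_\e^{-1}A_{\widetilde g_\e})=2^{-k}\binom{n}{k}>0$ by construction, and the smallness of $u_\e^{-1}w_\e$ in $C^2$ combined with $g_\e\in\Gamma_{k-1}^+$ gives $\sigma_j>0$ for $j=1,\dots,k-1$.

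\medskip

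A secondary methodological difference: rather than invoking ``standard Fredholm theory'' to pass from the a~priori estimate to a global inverse on $M_\e$, the paper proceeds constructively. It solves the Dirichlet problem on each half $\Omega_{i,\e}=M_i\setminus B(p_i,\e)$, establishes the blow--up estimate there (Proposition~\ref{stime:local2}), and then matches the two pieces across $\{0\}\times\Sp$ by inverting the difference $T_\e-S_\e$ of the associated Dirichlet--to--Neumann maps (Propositions~\ref{D to N 1}--\ref{D to N 2}); the limit $T_0-S_0$ is diagonalized by the spherical harmonics with eigenvalues $2\mu_j>0$. This yields the global inverse of Proposition~\ref{stime:global} with explicit control. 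Your direct blow--up on all of $M_\e$ is a legitimate alternative, but note that in Case~2 of the paper's argument the limiting domain is a \emph{half}--cylinder with Dirichlet condition, not the full Schwarzschild cylinder, and the indicial roots $\mu_j$ you need there are those of $\mathbb{L}^0_{cyl}(v_\Sigma)$, not of the asymptotically Euclidean end.
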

We want to point out that the restriction on $k$ in terms of the dimension $n$ perfectly agrees with the hypothesis needed by Guan, Lin and Wang \cite{glw} to prove their gluing result for $k$--admissible metrics. Moreover, the condition $2\leq2k<n$ turns out to be optimal. In fact we will show in Section \ref{obstr} that $\R \mathbb{P}^3$ and $\R \mathbb{P}^4$ with their standard metrics are {\em non degenerate} and $2$-admissible but both the connected sums $\R \mathbb{P}^3 \sharp \, \R \mathbb{P}^3$ and $\R \mathbb{P}^4 \sharp \,\R \mathbb{P}^4$ do not admit any $2$-admissible metric.

\medskip

Some comments are due concerning the {\em non degeneracy} condition introduced in Definition \ref{nondegeneracy}. On one hand this kind of hypothesis is common to all the gluing results based on the implicit function theorem and the perturbative approach (such as the previously mentioned works \cite{joyce}, \cite{mpu}, \cite{mazzieri1} and \cite{mazzieri2}) for the reasons explained above. On the other hand it must be pointed out that this condition is not fulfilled by the standard sphere $\mathbb{S}^n$, since its linearized operator is given by
$$
\mathbb{L}_{\mathbb{S}^n} (1) \, [\,\cdot\,] \,\, = \,\, - \hbox{${n-1 \choose k-1}$} \, \big( \tfrac{n-2k}{4k}\big)^{k-1} \, \big[ \, \Delta_{\mathbb{S}^n}  + \, n  \,\big] \, [\, \cdot \,] \,\, .
$$
This fact will prevent us from using Theorem \ref{main} to attach a sphere to another given solution of the $\sigma_k$--Yamabe problem. However, it is clear that this gluing is not relevant from a topological point of view. A more interesting observation is that, for $k=1$, sequences of spheres can actually be glued together via Schwarzschild--type necks, in order to obtain complete non compact (briefly singular) solutions to the Yamabe problem with isolated singularities on $\mathbb{S}^n$, as it has been done in \cite{schoen2}. For $2\leq 2k <n$ the second author proved in a joint work with Ndiaye \cite{mn} the existence of complete non compact and conformal metrics with constant $\sigma_k$--curvature on $\mathbb{S}^n \setminus \Lambda$ where $\Lambda$ is given by a finite number of points with a symmetric disposition. In this case (as well as in \cite{mp} which is an alternative construction in the case $k=1$) the metrics on the complete ends of the manifold are perturbations of $\sigma_k$--Delaunay metrics (for a definition see \cite{mn}). The $\sigma_k$--Delaunay metrics, on the other hand, are periodic metrics on the cylinder $\R \times Ê\mathbb{S}^{n-1}$ with positive constant $\sigma_k$--curvature, which in an appropriate limit become closer and closer to a sequence of standard $n$--dimensional spheres joined together by means of (infinitely many) $\sigma_k$--Schwarzschild necks. In this sense all the mentioned constructions for the singular problem (\cite{schoen2}, \cite{mp} and \cite{mn}) are consistent.
To conclude this remark about the non compact situation, we recall that for $k=1$ another construction is available and it is the one performed in \cite{mpu}. The solutions provided in this work are the so called dipole metrics, which in other words are connected sum of cylinders $\R \times \Sp$ endowed with $\sigma_1$--Delunay metrics. In a forthcoming paper \cite{cm} we extend this result to $2 \leq 2k <n$ by taking advantage of the connected sum techniques developed in this article.

\medskip

Before proceeding with the rest of the paper, we would like to illustrate with an easy example how Theorem \ref{main} may provide the existence of a nontrivial $k$--admissible metric with constant $\sigma_k$--curvature in one of the cases not covered by the present literature. 

\medskip

\textbf{Example:} $n=8$, $k=3$. Let $(M_i, g_i) = (\mathbb{S}^6 \times \mathbb{T}^2 , g := g_{\mathbb{S}^6} + g_{\mathbb{T}^2})$, $i=1,2$. Clearly this metric is not locally conformally flat and belongs to the $3$ positive cone $\Gamma^{+}_{3}$, since
$$
\sigma_{1}\big(g^{-1}A_{g}\big)=\big(\tfrac{5}{42}\big)\,18,\quad \sigma_{2}\big(g^{-1}A_{g}\big)=\big(\tfrac{5}{42}\big)^{2}\,105,\quad \sigma_{3}\big(g^{-1}A_{g}\big)=\big(\tfrac{5}{42}\big)^{3}\,56.
$$
We verify now that $(\mathbb{S}^6 \times \mathbb{T}^2 , g)$ is {\em non degenearate} in the sense of Definition \ref{nondegeneracy}. Let us assume that $v$ satisfies $\mathbb{L}_{g}(1)\,[v] = 0$. 
A direct computation shows that this is equivalent to 
$$
\big[\,-\Delta_{\mathbb{T}^{2}}\,\,-\,\tfrac{7}{24}\,\Delta_{\mathbb{S}^{6}} \, - \,\tfrac{25}{126}\, \big]\,v \,\, = \,\, 0 .
$$
Using separation of variables, we have the following expansion for $v$
$$
v=\hbox{$\sum_{j=0}^{+\infty}$} \, v^{j}(x)\,\,\phi_{j}(\theta),
$$ 
where $x\in \mathbb{T}^{2}$, $\theta \in \mathbb{S}^{6}$ and $\phi_{j}$ are the eigenfunctions of $\Delta_{\mathbb{S}^{6}}$ satisfying $-\Delta_{\mathbb{S}_{6}}\phi_{j}=\lambda_{j}\,\phi_{j}$ for every $j\in\N$. Hence, we have
$$
-\,\Delta_{\mathbb{T}^{2}}\, v^{j}\,=\, \big[ \,\tfrac{25}{126} \, - \, \tfrac{7}{24}\, \lambda_{j}\,\big]\,v^{j}, \quad j \in \N.
$$
Recalling that $spec(\mathbb{S}^{6})= \{i\,(i+5): \, i\in\N\}$, we have $\lambda_{j}\geq 6$ for $j\geq 1$, which clearly implies $v^{j}\equiv 0$ for $j\geq 1$. On the other hand, for $j=0$ we have
$$
-\,\Delta_{\mathbb{T}^{2}}\, v^{0}\,=\, \tfrac{25}{126} \,v^{0},
$$
but it is well known that the spectrum of the standard flat torus $\mathbb{T}^{2}$ is given by $spec(\mathbb{T}^{2})=\{\,4\pi^{2} i: i\in\N \}$. This implies $v^{0}\equiv0$ and thus the non degeneracy of $(\mathbb{S}^6 \times \mathbb{T}^2 , g)$ is proven. Theorem \ref{main} can now by applied to produce on $(\mathbb{S}^6 \times \mathbb{T}^{2}) \, \sharp \, (\mathbb{S}^6 \times \mathbb{T}^{2})$ a family of constant $\sigma_3$--curvature metrics lying in $\Gamma^+_3$. Moreover, since these metrics are obtained via conformal perturbation of approximate solutions which agree with $g_{i}$ on $M_{i}\setminus B(p_{i},1)$, we conclude that the $\sigma_{3}$--Yamabe metrics produced on $(\mathbb{S}^6 \times \mathbb{T}^{2}) \, \sharp \, (\mathbb{S}^6 \times \mathbb{T}^{2})$ are not locally conformally flat.

\medskip

The plan of the paper is the following: in Section \ref{SCH} we define the $\sigma_{k}$--Schwarzschild metric. In Section \ref{s:as} we construct the approximate solution metrics $\{g_\e\}_\e$ on the connected sum $M_\e$. In Section \ref{s:linear} we provide existence, uniqueness and $\e$--{\em a priori} estimates for solutions to the linearized problem. In Section \ref{s:nonlinear} we  deal with the nonlinear analysis and we will conclude the proof of Theorem \ref{main} by means of a Newton iteration scheme. Finally, in Section \ref{obstr} we  will illustrate with two counterexamples the geometric obstruction which prevent the extension of our gluing theorem to the case $2k\geq n$.

\

\begin{ackn} 
This project started when the second author was a post--doc at the Max-Planck-Institut f\"ur Gravitationsphysik. The authors are partially supported by the Italian project FIRB--IDEAS ``Analysis and Beyond''.
\end{ackn}

\

\section{$\sigma_{k}$--Schwarzschild metric on $\mathbb{R}\times\mathbb{S}^{n-1}$}\label{SCH}

As anticipated in the introduction, the first step in our strategy amounts to build approximate solutions on the connected sum of $(M_{1}, g_{1})$ and $(M_{2}, g_{2})$. To do that we need to change the metric in a neighborhood of the points that we are going to excise, obtaining a new metric in the so called {\em neck region}. In the scalar curvature case a clever choice turns out to be the Schwarzschild metric. This is a complete scalar flat metric conformal to the cylindrical metric $g_{cyl}$ on $\mathbb{R}\times\mathbb{S}^{n-1}$. The explicit formula is given by
$$g:=\cosh\left(\tfrac{n-2}{2}t\right)^{\f{4}{n-2}}g_{cyl}.$$

In a similar way, it is easy to construct a complete conformal metric on $\mathbb{R}\times\mathbb{S}^{n-1}$ with zero $\sigma_{k}$--curvature, for all $2\leq2k<n$. Namely 

\begin{pro} Let $g_{v}$ be a metric on $\mathbb{R}\times\mathbb{S}^{n-1}$ defined by $g_{v}=v^{4k/(n-2k)}g_{cyl}$, $v$ being a positive smooth function depending only on $t\in\mathbb{R}$.  Let us define the quantity $$h(t):=v^{2}(t)-\big(\tfrac{2k}{n-2k}\big)^{2}\dot{v}^{2}(t).$$ Then, if $h_{0}:=h(0)>0$,  the family of positive solutions $v=v(t)$ to the equation
$$
\sigma_{k}(B_{g_{v}})=0\quad\mbox{in }\mathbb{R}\times\mathbb{S}^{n-1}\\
$$
is given by $v(t)=\sqrt{h_{0}}\cosh\left(\tfrac{n-2k}{2k}t-c\right),$ $c\in\mathbb{R}$. 
\end{pro}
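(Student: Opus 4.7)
The plan is to translate the equation $\sigma_k(B_{g_v}) = 0$ into an ordinary differential equation for $v(t)$ by exploiting the high symmetry of the situation: the cylindrical background, the fact that $v$ depends only on $t$, and the presence of only two distinct eigenvalues in the associated Schouten-type endomorphism. Throughout, set $\alpha := \frac{n-2k}{2k}$ to lighten notation.

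First I would compute $A_{g_{cyl}}$ on $\R \times \Sp$. Because $g_{cyl} = dt^2 + g_{\Sp}$ is a Riemannian product, the Ricci tensor equals $(n-2) g_{\Sp}$ and the scalar curvature equals $(n-1)(n-2)$. A direct substitution gives that, in the orthonormal frame adapted to the product structure, $A_{g_{cyl}}$ has eigenvalue $-\tfrac12$ in the $t$-direction and $\tfrac12$ with multiplicity $n-1$ in the spherical directions. Next, since $v$ depends only on $t$ and the fibers $\{t\}\times\Sp$ are totally geodesic in $g_{cyl}$, the Hessian reduces to $\nabla^2 v = \ddot v\, dt\otimes dt$ and $|dv|^2 = \dot v^2$. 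Plugging these into the conformal transformation law for $A_{\bar g_u}$ displayed in the introduction and then using the definition (\ref{endo}) of $B_{\bar g_u}$ with $\bar g = g_{cyl}$, one readily finds that $B_{g_v}$ has only two distinct eigenvalues, namely
\bea
\nu_t  & = & -\,\tfrac{n-2k}{4k}\, v^2 \,-\, v\,\ddot v \,+\, \tfrac{n-k}{n-2k}\, \dot v^2, \\
\nu_s  & = & \tfrac{n-2k}{4k}\, v^2 \,-\, \tfrac{k}{n-2k}\, \dot v^2,
\eea
with multiplicities $1$ and $n-1$ respectively.

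The algebraic identity $\sigma_k(\nu_t, \nu_s, \ldots, \nu_s) = \binom{n-1}{k-1}\, \nu_s^{k-1} \big[\tfrac{n-k}{k} \nu_s + \nu_t\big]$ then factors the equation $\sigma_k(B_{g_v}) = 0$ into two cases. The factor $\nu_s$ vanishes exactly when $\dot v^2 = \alpha^2 v^2$, which yields $v = C e^{\pm\alpha t}$ and hence $h(t) \equiv 0$; this is excluded by the hypothesis $h_0 > 0$. So we must have $\tfrac{n-k}{k}\nu_s + \nu_t = 0$. Substituting the two expressions above, the $\dot v^2$ terms cancel identically and what remains collapses to the single second-order linear ODE
\be
\label{odev}
\ddot v \,\, = \,\, \alpha^2\, v, \qquad \alpha \,=\, \tfrac{n-2k}{2k}.
\ee

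Finally I would recognize $h(t)$ as a first integral of (\ref{odev}): indeed $\dot h = 2\dot v(v - \alpha^{-2}\ddot v) = 0$, so $h \equiv h_0 > 0$. The general solution of (\ref{odev}) has the form $v(t) = A\cosh(\alpha t) + B\sinh(\alpha t)$; imposing positivity for all $t$ forces $A \geq |B|$, and the conservation of $h$ then gives $A^2 - B^2 = h_0$. Writing $A = \sqrt{h_0}\cosh c$, $B = \sqrt{h_0}\sinh c$ collapses the family to
$$
v(t) \,\,=\,\, \sqrt{h_0}\,\cosh\bigl(\tfrac{n-2k}{2k}\, t \,-\, c \bigr), \qquad c \in \R,
$$
as claimed. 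The main labor is the bookkeeping in computing the two eigenvalues $\nu_t$ and $\nu_s$; once that is in hand, the reduction to (\ref{odev}) and the integration via the conserved quantity $h$ are immediate.
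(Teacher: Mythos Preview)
Your proof is correct and follows essentially the same approach as the paper: compute the two eigenvalues of $B_{g_v}$, factor $\sigma_k$ for a matrix with one simple and one $(n-1)$-fold eigenvalue, use $h_0>0$ to discard the $\nu_s=0$ branch, and reduce to the linear ODE $\ddot v=\alpha^2 v$ with first integral $h$. You actually supply more detail than the paper on the final integration step (the $A\cosh+B\sinh$ parametrization and the identification $A=\sqrt{h_0}\cosh c$, $B=\sqrt{h_0}\sinh c$), whereas the paper simply states that the conclusion ``follows at once.''
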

\begin{rem} We will refer to the metric defined by $$g_{\Sig}:=v_{\Sig}^{\frac{4k}{n-2k}}g_{cyl},$$
with $v_{\Sig}:=\cosh\left(\tfrac{n-2k}{2k}t\right)$, as the $\sigma_{k}$--Schwarzschild metric.    
\end{rem}
\begin{rem} We notice that the family of solutions $v^{4k/(n-2k)}g_{cyl}$ on $\mathbb{R}\times\mathbb{S}^{n-1}$ obtained in the proposition above give rise to a family of complete conformal radial metrics $u(|x|)^{4k/(n-2k)}g_{\mathbb{R}^{n}}$ on $\mathbb{R}^{n}\setminus\{0\}$ with zero $\sigma_{k}$--curvature via the correspondence $u(|x|)=|x|^{-(n-2k)/2k}v(-\log|x|)$.
\end{rem}

\begin{proof} For convenience the cylindrical metric $g_{cyl}$ will also be denoted by $dt^2+d\theta^2$, where $d\theta^{2}$ is the standard metric on $\mathbb{S}^{n-1}$. Moreover let us denote by $A_{cyl}$ the Schouten tensor of the cylindrical metric. We have for $A_{g_v}$.
\be
\label{conf. Schouten} A_{g_{v}}  =  A_{cyl} - 
\tfrac{2k}{n-2k}  v^{-1}{\nabla^2 v}  + 
\tfrac{2kn}{(n-2k)^2} v^{-2}  {dv \otimes dv}  - 
\tfrac{2k^2}{(n-2k)^2}  v^{-2}  {|dv|^2}  g_{cyl}  ,
\ee
where $\nabla^2$ and $|\cdot|$ are computed with respect to
$g_{cyl}$. Since the Schouten tensor of the cylindrical metric is
explicitly given by
\bea
A_{cyl}  =  -\tfrac{1}{2}  dt^2  + \tfrac{1}{2}
 d\theta^2  .
\eea
From (\ref{endo}) we get
\begin{eqnarray}\label{coeff}
(B_{g_{v}})^t_t & = & -\tfrac{n-2k}{4k}v^2  -  v \ddot{v} 
 +  \tfrac{n-k}{n-2k} \dot{v}^2 \nonumber = -\tfrac{n-2k}{4k}\left(\tfrac{n-k}{k}\right)h+v\left[\left(\tfrac{n-2k}{2k}\right)^{2}v-\ddot{v}\right]\\
(B_{g_{v}})^{i}_j & = & \left( \tfrac{n-2k}{4k} v^2  - 
\tfrac{k}{n-2k} \dot{v}^2 \right)  \delta^{\, i}_j =\tfrac{n-2k}{4k}\,h\,\,\delta^{\,i}_{j}\\
(B_{g_{v}})^{t}_j & = & 0   \,\,= \,\,(B_{g_{v}})^i_t  ,\nonumber
\end{eqnarray}
for $1 \leq i,j \leq n-1$. A straightforward computation yields 
$$\sigma_{k}(B_{g_{v}})=\hbox{${n-1\choose k-1}$}\left(\tfrac{n-2k}{4k}\,h\right)^{k-1}\,v\,\left[\left(\tfrac{n-2k}{2k}\right)^{2}v-\ddot{v}\right].$$ Since $h_{0}:=h(0)>0$ and $v>0$, by continuity, the zero $\sigma_{k}$--curvature equation is equivalent to 
$$
\ddot{v}(t)=\left(\tfrac{n-2k}{2k}\right)^{2}v(t)\quad\mbox{in }\mathbb{R}\\
$$
and $h(t)=h_{0}$ for all $t\in\R$. The statement follows at once. 
\end{proof}

Since in the following we will need to study the mapping properties of the linearized operator about (a (scaled version of) the $\sigma_{k}$--Schwarzschild metric, we consider the conformal perturbation, 
\begin{eqnarray*}
s  \longmapsto  g_s  := (v_{\Sig} + sw)^{\frac{4k}{n-2k}} \, g_{cyl},
\end{eqnarray*}
for $s \in \R$ and $w \in \mathcal{C}^{\infty}\left(\mathbb{R}\times\mathbb{S}^{n-1}\right)$.
Obviously $g_0  =  g_{\Sig}$. Let now $A_{{s}}$ be the Schouten tensor of
the metric $g_s$, and let $B_{{s}}$ be the symmetric $(1,1)$--tensor
defined by
\begin{eqnarray}
\label{def B} B_{{s}}  :=  \tfrac{n-2k}{2k} 
({v_{\Sig}+sw})^{\frac{2n}{n-2k}}  g_{s}^{-1} \cdot A_{{s}} .
\end{eqnarray}
Notice that $A_0  =  A_{g_{\Sig}}$ and $B_0  =  B_{g_{\Sig}}$.

\

Even thought our main purpose is to solve the equation (\ref{eq}), it will be useful to understand the features of the linear operator given by
\bea
\mathbb{L}^{0}_{cyl}(v_{\Sig})[w]:=\left.\frac{d}{ds}\right|_{s=0}  \sigma_k \left( B_{s} \right) .
\eea
To calculate the derivative of $\sigma_k\left( B_s \right)$, we use
the formula
\begin{eqnarray}
\frac{d}{ds}\,\, \sigma_k\left( B_s \right) & = & {\rm tr} \,  \,\,
T_{k-1} \left( B_s \right)  \, \cdot \, \frac{d  B_s}{ds} \,\, ,
\end{eqnarray}
where, for an integer $0 \leq m \leq n$, $T_m \left( B_s \right)$ is
defined as
\begin{eqnarray*}
T_m(B_s) & := & \hbox{$\sum_{j=0}^{m}$} \,\, (-1)^j \,\,\, \sigma_{m-j}(B_s)
\,\, \, B_s^j
\end{eqnarray*}
and it is known as the $m$-th Newton transform of $B_s$ (in the
formula above we use the conventions: $B^0 = \mathbb{I}_n$ and
$\sigma_0(B_s) = 1$). As a consequence we get:
\begin{eqnarray}
\label{formal linearized} \left. \frac{d}{ds}\right|_{s=0}\,\,
\sigma_k\left( B_s \right) & = & \hbox{$\sum_{j=0}^{k-1}$} \,\,(-1)^j \,\,\,
\sigma_{k-1-j} \left( B_0 \right) \,\,\, {\rm tr}  \,\,B^j_0 \cdot
\left. \frac{d B_s}{ds}\right|_{s=0}  \,\, .
\end{eqnarray}
To make the expression above more explicit, we need to compute the
coefficients of $B_s$ and their derivatives at $s=0$. For the coefficients of $B_{0}$, from  formulae \eqref{coeff}, we obtain 
\begin{eqnarray*}
(B_0)^t_t \,\,\, = \,\,\, \tfrac{n-2k}{4k} \tfrac{k-n}{k}\,\, h_{\Sig} \,\, \,\,\quad & \hbox{and} & \quad
\,\, (B_0)^{\,i}_j \,\,\, = \,\,\, \,\tfrac{n-2k}{4k}\,\, h_{\Sig} \,\,
\delta^{\, i}_j \,\, .
\end{eqnarray*}
Replacing $v$ by $v_{\Sig}+sw$ in the identity (\ref{conf.
Schouten}), one can easily obtain the expression for $A_s$. Using
(\ref{def B}) again, it is straightforward to see that:
\begin{eqnarray}
\left. \frac{d \, (B_s)^t_t }{ds}\right|_{s=0} & = & -\,\,v_{\Sig} \,\,
\partial_t^2 w \,\, + \,\, \tfrac{2(n-k)}{n-2k} \, \dot{v_{\Sig}} \,\,
\partial_t w \,\, - \,\, \left(\, \tfrac{n-2k}{2k} \, v_{\Sig} \, + \, \ddot{v_{\Sig}}  \,
\right) \,\, w \nonumber\\
\left. \frac{d \, (B_s)^{\, i}_j }{ds}\right|_{s=0} & = & - \,\, v_{\Sig}
\,\, g_{\, \theta}^{il} \, \left(\nabla^2_\theta \, w \right)_{lj}
\,\, - \,\, \tfrac{2k}{n-2k} \, \dot{v_{\Sig}} \, \delta^{\, i}_j \,\,
\partial_t w \,\, + \,\, \tfrac{n-2k}{2k} \, v_{\Sig} \, \delta^{\,i}_j
\,\, w \\
\left. \frac{d \, (B_s)^{\, t}_j }{ds}\right|_{s=0} & = & - \,\, v_{\Sig}
\,\, \partial_{t}\partial_j w \,\, + \,\, \tfrac{n}{n-2k} \, \dot{v_{\Sig}}
\,\,
\partial_j w \nonumber \\
\left. \frac{d \, (B_s)^{i}_{ t} }{ds}\right|_{s=0} & = & g_{\,
\theta}^{il} \, \left( \, -\,\,v_{\Sig} \,\, \partial_t \partial_l w  \,\,
+ \,\, \tfrac{n}{n-2k} \, \dot{v_{\Sig}} \,\, \partial_l w \, \right)
\nonumber
\end{eqnarray}
This implies, for $1 \leq j \leq k-1$,
\begin{eqnarray*}
{\rm tr}\, \, B_0^{\, j} \,\cdot \,
\left.\frac{dB_s}{ds}\right|_{s=0}  & = & \left( \tfrac{n-2k}{4k}
\right)^j \, h_{\Sig}^j \,\, v_{\Sig} \,\, \left\{ \, \left(\tfrac{k-n}{k}\right)^j \,\, \left[ \, -\,\,
\partial_t^2 w \,\, + \,\, \tfrac{2(n-k)}{n-2k} \, \frac{\dot{v_{\Sig}}}{v_{\Sig}} \,\,
\partial_t w \,\, - \,\, \left( \tfrac{n-2k}{2k} +  \frac{\ddot{v_{\Sig}}}{v_{\Sig}}
\right) \,\, w \,\right] \right. \\
& &  \,\,\,\,\,\,\, \quad  \quad   \quad \,\,\quad \quad
\quad \quad \,\,\,\,\,\,\,\, + \,\,\,\,\, \left[ \, - \,
\Delta_\theta \, w \,\, - \,\, \tfrac{2k(n-1)}{n-2k} \,
\frac{\dot{v_{\Sig}}}{v_{\Sig}} \,\,\,
\partial_t w \,\, + \,\, \tfrac{(n-2k)(n-1)}{2k} \,\, w  \,
\right] \,\, \bigg\}
\end{eqnarray*}
and
\begin{eqnarray}
\label{sigmaj}
\sigma_{k-1-j} (B_0) & = & \left(\tfrac{n-2k}{4k} \right)^{k-1-j} \, h_{\Sig}^{k-1-j}\,\,\tfrac{1+j}{k}\,\hbox{${n\choose k-1-j}$}.
\end{eqnarray}
Using these in the formal expression of the derivative of
$\sigma_k({B_s})$ and using a similar computation as in \cite{mn}, we obtain
\begin{eqnarray}
\label{1st expression D} \mathbb{L}^{0}_{cyl}(v_{\Sig}) [w] = -C_{n,k}\, v_{\Sig}\,h_{\Sig}^{k-1} \big[ \, \partial^{2}_{t}+\tfrac{n-k}{k(n-1)}\D_{\t}-\big(\tfrac{n-2k}{2k}\big)^{2}\big]\, w,
\end{eqnarray}
where
$C_{n,k} \,\, = \,\,  {n-1 \choose k-1} \,\, \left( \tfrac{n-2k}{4k}
\right)^{k-1}$ and $h_\Sig \equiv 1$.

\

Notice that from \eqref{sigmaj} one has immediately that the $\sigma_{k}$--Schwarzschild metric $g_{\Sig}$ belongs to $\overline{\Gamma}_{k}^{+}\cap \Gamma_{k-1}^{+}$, for $2\leq 2k<n$.
\

\section{Approximate solutions and perturbative approach}\label{s:as}

\medskip

In this section we construct the connected sum $M_{\e}:=M_{1}\sharp_{\e} M_{2}$ of the two manifolds $(M_{1},g_{1})$, $(M_{2},g_{2})$ obtained by excising two geodesic balls of radius $\e\in(0,1)$ centered at $p_{1}\in M_{1}$ and $p_{2}\in M_{2}$ and identifying the two left over boundaries. At the same time we will define on $M_{\e}$ a new metric $g_{\e}$ which agrees with the old ones outside the balls of radius one and which is modeled on (a scaled version of) the $\sigma_{k}$--Schwarzschild metric in the neck region.

\

To describe the construction we consider the diffeomorphisms given by the  exponential maps
$$\exp_{p_{i}}: B(O_{p_{i}},1)\subset T_{p_{i}}M_{i}\longrightarrow B(p_{i},1)\subset M_{i}, \quad i=1,2.$$ Next, to fix the notation, we identify the tangent spaces $T_{p_{i}}M_{i}$ with $\R^{n}$. It is well known that this identification yields normal coordinates centered at the points $p_{i}$, namely
$$x: B(p_{1},1)\longrightarrow \R^{n}\quad\quad \mbox{and}\quad\quad y: B(p_{2},1)\longrightarrow \R^{n}.$$
We introduce now asymptotic cylindrical coordinates on the punctured ball $B^{*}(0,1)=x\left(B^{*}(p_{1},1)\right)$ setting $t:=\log\e-\log|x|$ and $\t:=x/|x|$. In this way we have the diffeomorphism $B^{*}(0,1)\simeq(\log\e,+\infty)\times\mathbb{S}^{n-1}$. Analogously, we consider the diffeomorphism $y\left(B^{*}(p_{2},1)\right)=B^{*}(0,1)\simeq(-\infty,-\log\e)\times\mathbb{S}^{n-1}$, this time setting $t:=-\log\e+\log|y|$ and $\t:=y/|y|$. 

\

In order to define the differential structure of $M_{\e}$, we excise a geodesic ball $B(p_{i},\e)$ from $M_{i}$, obtaining an annular region $A(p_{i},1,\e):=B(p_{i},1)\setminus B(p_{i},\e)$, $i=1,2$. The asymptotic cylindrical coordinates introduced above can be used to define a natural coordinate system on the neck region
$$
(t,\t):\,\left[A(p_{1},1,\e)\sqcup A(p_{2},1,\e)\right]/\sim\,\,\longrightarrow (\log\e,-\log\e)\times\mathbb{S}^{n-1}=:T_{\e},
$$
where $\sim$ denotes the equivalence which identifies the boundaries of $B(p_{1},\e)$ and $B(p_{2},\e)$, namely
$$q_{1}\sim q_{2}\,\,\Longleftrightarrow\,\, x/|x|(q_{1})=y/|y|(q_{2})\quad\mbox{and}\quad |x|(q_{1})=\e=|y|(q_{2}).
$$
Clearly, in this coordinates, the two identified boundaries correspond now to the set $\{0\}\times\mathbb{S}^{n-1}$. To complete the definition of the differential structure of the connected sum $M_{\e}$ it is sufficient to consider the old coordinate charts on $M_{i}\setminus B(p_{i},1)$, $i=1,2$. 

\

We are now ready to define on $M_{\e}$ the approximate solution metric $g_{\e}$. First of all we define $g_{\e}$ to be equal to the $g_{i}$ on $M_{i}\setminus B(p_{i},1)$, $i=1,2$. To define $g_{\e}$ in the neck region, we start by observing that the choice of the normal coordinate system allows us to expand the two metric $g_{1}$ and $g_{2}$ around $p_{1}$ and $p_{2}$ respectively as 
$$
g_{1}=\left[\d_{\a\b}+\bigo{|x|^{2}}\right]\,dx^{\a}\otimes dx^{\b}\,\quad\mbox{and}\,\quad g_{2}=\left[\d_{\a\b}+\bigo{|y|^{2}}\right]\,dy^{\a}\otimes dy^{\b}.
$$
In terms of the $(t,\t)$--coordinates we get, for $i=1,2$,
\bea
g_{i}&=& u_{i}^{\frac{4k}{n-2k}}    \left[(1+a^{(i)}_{tt})dt\otimes dt \, + \, (g^{\t}_{jl}+a^{(i)}_{jl})d\t^{j}\otimes d\t^{l} \,
+\,a_{tj}^{(i)}(dt\otimes d\t^{j}+d\t^{j}\otimes dt)\right] ,
\eea
where, as usual, $g^{\t}_{jl}$ are the coefficients of the round metric on $\mathbb{S}^{{n-1}}$, the conformal factors $u_{i}$ are given by
$$
u_{1} := \e^{\frac{n-2k}{2k}} e^{-\frac{n-2k}{2k} t} \quad \mbox{and} \quad u_{2} := \e^{\frac{n-2k}{2k}} e^{\frac{n-2k}{2k} t} 
$$
and, finally, the remainders $a^{(i)}_{\cdot \,\cdot\cdot }$ verify
$$
a^{(1)}_{\cdot \, \cdot\cdot} = \bigo{\e^{2} e^{-2t}} \quad \mbox{and} \quad a^{(2)}_{\cdot \, \cdot\cdot} = \bigo{\e^{2} e^{2t}} .
$$

\

We choose a cut-off functions $\eta: (\log\e, -\log\e)
\rightarrow [0,1]$ to be a non increasing smooth function which is
identically equal to $1$ in $(\log\ep, -1]$ and $0$ in
$[1,-\log\ep)$,  and we choose another cut-off function $\chi :
(\log\ep, -\log\ep) \rightarrow [0,1]$ to be a non increasing smooth
function which is identically equal to $1$ in $(\log\ep, -\log \ep
-1]$ and which satisfies $\lim_{t\rightarrow -\log\ep} \chi = 0$.
Using these two cut-off functions, we can define a new 
conformal factor $u_{\e}$ by
\bea
u_{\e} : = \chi(t) \, u_{1}  + \chi(-t) \,
u_{2}
\eea
and the metric $g_{\e} $ by
\be\label{gepgen}
g_{\e}&=& u_{\e}^{\frac{4k}{n-2k}}    \left[(1+a_{tt})dt\otimes dt \, + \, (g^{\t}_{jl}+a_{jl})d\t^{j}\otimes d\t^{l}
\,+\,a_{tj}(dt\otimes d\t^{j}+d\t^{j}\otimes dt)\right] ,
\ee
where the remeinder $a_{\cdot \,\cdot\cdot }$ verifes
$$
a_{\cdot \, \cdot\cdot} = \eta\, a^{(1)}_{\cdot\,\cdot\cdot}+(1-\eta)\,a^{(2)}_{\cdot\, \cdot\cdot}=\bigo{\e^{2} \cosh(2t)}.
$$
We want to point out that the conformal factor $u_{\e}$ in $(\log\e+1,-\log\e-1)\times\mathbb{S}^{n-1}$ is a scaled version of the conformal factor $v_{\Sig}$ of the $\sigma_{k}$--Schwarzschild metric, namely
$$
u_{\e}(t)=\e^{\frac{n-2k}{2k}}\cosh\left(\tfrac{n-2k}{2k}t\right)
$$ 
in $(\log\e+1,-\log\e-1)\times\mathbb{S}^{n-1}$. In force of this, the approximate solution metric $g_{\e}$ can be viewed as a perturbation of a scaled version of $g_{\Sig}=v_{\Sig}^{4k/(n-2k)}g_{cyl}$, namely
\be
\label{geps}
g_{\e}& = &  \big( 2\,\e^{\frac{n-2k}{2k}}\big)^{\frac{4k}{n-2k}}g_{\Sig}+A^{\e},
\ee
where 
\bea
A^{\e}&=& u_{\e}^{\frac{4k}{n-2k}}    \left[a_{tt}dt\otimes dt + a_{jl}d\t^{j}\otimes d\t^{l}+a_{tj}(dt\otimes d\t^{j}+d\t^{j}\otimes dt)\right],
\eea
and the coefficients verify $A^{\e}_{\cdot\,\cdot\cdot}=u_{\e}^{\frac{4k}{n-2k}} a_{\cdot\,\cdot\cdot}=\bigo{\e^{\frac{n+2k}{2k}}\cosh\big(\tfrac{n+2k}{2k}t\big)}$.

\

To simplify all the computations in the analysis we will make the following assumption

\medskip

\begin{ass}\label{ass} The metric $g_{i}$ is conformally flat in $B(p_{i},1)$, $i=1,2$. 
\end{ass}
\medskip

Later we will show that this assumption can be removed. Now we are going to describe how the expression of $g_{\e}$ simplifies under the conformally flatness of the metric around the gluing locus. First of all, we observe that for $i=1,2$ the metric $g_{i}$ can now be expanded around $p_{i}$ as 
\be\label{asd}
g_{i}=u_{i}^{\frac{4k}{n-2k}}(1+c_{i})\,g_{cyl},
\ee
with $c_{1}=\bigo{\e^{2}e^{-2t}}$ and $c_{2}=\bigo{\e^{2}e^{2t}}$. Thus, it is natural to define the approximate solution metric $g_{\e}$ as 
\be\label{gep}
g_{\e}=u_{\e}^{\frac{4k}{n-2k}}(1+c)\,g_{cyl},
\ee
where $c:=\eta\,c_{1}+(1-\eta)\,c_{2}=\bigo{\e^{2}\cosh(2t)}$. Notice that this definition perfectly agrees with \eqref{gepgen} with 
$$
c_{i}=a^{(i)}_{tt},\quad a^{(i)}_{jl}=c_{i}\,g_{jl}^{\t}\quad\,\mbox{and}\quad\, a^{(i)}_{tj}=0,\,\,\,i=1,2.
$$
To summarize, we fix a background metric $\bar{g}$ defined by 
$$
\bar{g}:=
\begin{cases}
g_{i}\quad\quad&\hbox{on }M_{i}\setminus B(p_{i},1)\\
u_{\e}^{-\frac{4k}{n-2k}}g_{\e}\quad\quad&\hbox{on }A(p_{1},1,\e)\sqcup A(p_{2},1,\e)]/\sim
\end{cases}
$$
In particular we notice that $\bar{g}=(1+c)\,g_{cyl}$ on $A(p_{1},1,\e)\sqcup A(p_{2},1,\e)$ under the Assumption \ref{ass}.
In order to write the approximate solution $g_{\e}$ as a conformal deformation of the background metric $\bar{g}$, it is sufficient to extend the definition of $u_{\e}$ setting
$u_{\e}\equiv1$ on $M_{\e}\setminus T_{\e}$. It is clear that $$g_{\e}=u_{\e}^{\frac{4k}{n-2k}}\bar{g}\,.$$
To conclude the description of the approximate solutions we observe that from our definition it follows immediately that $g_{\e}\rightarrow g_{i}$ with respect to the $C^{m}$--topology on the compact subsets of $M_{i}\setminus\{p_{i}\}$, for $i=1,2$ and every $m\in\mathbb{N}$. A consequence of this fact is the following
\begin{lem}
\label{SGUAN}
Let $g_1$ and $g_2$ be two $(k-1)$--admissible metrics on $M_1$ and $M_2$, respectively.
Then there exists a positive real number $\e_0>0$ only depending on $n$, $k$ and the $C^2$--norm of the coefficients of the metrics $g_1$ and $g_2$ such that, for every $\e \in (0,\e_0]$, the approximate solution $g_\e$ lies in $\Gamma^{+}_{k-1}$.
\end{lem}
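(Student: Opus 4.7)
\emph{Plan.} The approximate metric $g_\e$ agrees with $g_i$ outside $B(p_i, 1)$ and, in the interior of the neck (where both cutoffs equal $1$), takes the explicit form $g_\e = 2^{4k/(n-2k)} \e^2 (1+c) g_\Sig$ with $c = \bigo{\e^2 \cosh(2t)}$, by Assumption \ref{ass}. Exploiting the openness of the cone $\Gamma^+_{k-1}$, the plan is to verify the cone condition separately on two pieces that together cover $M_\e$: an outer piece $R^{out}_i := M_i \setminus B(p_i, e^{-T})$ (a compact subset of $M_i \setminus \{p_i\}$, independent of $\e$), and an inner-neck piece $R^{in}_\e := (\log\e + T - 1, -\log\e - T + 1) \times \mathbb{S}^{n-1}$, for a large fixed $T>0$ to be chosen.

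On $R^{out}_i$, the $C^m$-convergence $g_\e \to g_i$ on compact subsets of $M_i \setminus \{p_i\}$ stated immediately before the lemma, combined with the uniform positivity of $\sigma_j(g_i^{-1} A_{g_i})$ on the compact $R^{out}_i$ granted by $(k-1)$-admissibility of $g_i$, yields $g_\e \in \Gamma^+_{k-1}$ on $R^{out}_1 \cup R^{out}_2$ for every sufficiently small $\e$. On $R^{in}_\e$ the constant rescaling $g_\e \mapsto (1+c) g_\Sig$ (which preserves $\Gamma^+_{k-1}$) reduces the question to verifying that the conformal perturbation $(1+c) g_\Sig$ lies in $\Gamma^+_{k-1}$. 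Formula \eqref{sigmaj} from the proof of Proposition~2.1 furnishes the uniform lower bounds $\sigma_j(B_{g_\Sig}) = \bigl(\tfrac{n-2k}{4k}\bigr)^j \binom{n}{j} \tfrac{k-j}{k} > 0$ for $j=1,\ldots,k-1$, and on $R^{in}_\e$ the elementary estimate $\cosh(2t) \leq \tfrac{1}{2} \e^{-2} e^{-2T}$ gives $\|c\|_{C^0} \leq C e^{-2T}$, with analogous control on the first and second derivatives once they are transferred into $g_\Sig$-norms (each $g_\Sig$-derivative picks up a factor $v_\Sig^{-2k/(n-2k)}\leq 1$ that exactly compensates the $\cosh(2t)$-growth). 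The standard conformal-change formula for the Schouten tensor then controls $B_{(1+c)g_\Sig} - B_{g_\Sig}$ in terms of $\|c\|_{C^2_{g_\Sig}}$, so that choosing $T$ large enough that this perturbation stays below the uniform lower bounds above, and then $\e_0$ small enough to close the outer-region argument, completes the proof.

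The main obstacle is the inner-neck step itself. The region $R^{in}_\e$ is unbounded with width $\sim 2|\log\e|$, and the eigenvalues of $g_\Sig^{-1} A_{g_\Sig}$ decay like $v_\Sig^{-2n/(n-2k)}$ at infinity, so a naive operator-norm perturbation of that endomorphism is too crude for the task. One has to work with $B_{g_\Sig}$ — whose $\sigma_j$ values are positive constants independent of the point by \eqref{sigmaj} — extend the radial $B$-formulas from Proposition~2.1 to conformal factors depending on both $t$ and $\theta$ (the $\theta$-dependence entering through $c$), and verify that the cross terms produced by the non-radial perturbation do not disturb the strict positivity of $\sigma_j(B)$ for $j\leq k-1$.
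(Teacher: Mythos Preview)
Your two-region decomposition and the direct perturbative strategy are natural, but the inner-neck step contains a genuine gap. The claim that ``the standard conformal-change formula for the Schouten tensor then controls $B_{(1+c)g_\Sig} - B_{g_\Sig}$ in terms of $\|c\|_{C^2_{g_\Sig}}$'' is not correct. The conformal change formula with base metric $g_\Sig$ does control the endomorphism $g_\Sig^{-1}A_{(1+c)g_\Sig} - g_\Sig^{-1}A_{g_\Sig}$ in terms of $\|c\|_{C^2_{g_\Sig}}$ (which you correctly bound by $O(e^{-2T})+O(\e)$ on $R^{in}_\e$), but $B$ carries an additional factor $v_\Sig^{2n/(n-2k)}$, and this factor blows up at the ends of $R^{in}_\e$. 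Concretely, if you compute $(B_{(1+c)g_\Sig})^t_t - (B_{g_\Sig})^t_t$ directly from \eqref{coeff} with $v$ replaced by $\tilde v = (1+c)^{(n-2k)/4k}v_\Sig$, the leading perturbation terms are of the form $v_\Sig\dot v_\Sig\,\partial_t\tilde c$ and $v_\Sig^2\,\partial_t^2\tilde c$, both of size $\e^2\, v_\Sig^2\cosh(2t) \sim \e^2 e^{\frac{n}{k}|t|}$. Near $|t| = |\log\e|-T$ this is of order $\e^{\,2-n/k}$, which diverges as $\e\to 0$ since $n>2k$. Thus your $g_\Sig$-comparison only closes on the central portion $|t|\lesssim \tfrac{2k}{n}|\log\e|$ of the neck, leaving the intermediate range $\tfrac{2k}{n}|\log\e| \lesssim |t| \lesssim |\log\e|-T$ (equivalently $\e^{(n-2k)/n}\lesssim |x|\lesssim e^{-T}$) uncovered by either of your two arguments.

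The paper sidesteps this by arguing by contradiction through a blow-up sequence $(\e_i,q_i)$ with $\sigma_j(g_{\e_i}^{-1}A_{g_{\e_i}})(q_i)\le 0$ and distinguishing three regimes: $q_i$ staying away from $p_1$; $\mathrm{dist}(q_i,p_1)$ comparable to $\e_i$; and $\mathrm{dist}(q_i,p_1)\to 0$ with $\mathrm{dist}(q_i,p_1)/\e_i\to\infty$. The first two correspond to your outer and inner regions, with limit models $g_1$ and $g_\Sig$ respectively. The third case is precisely the intermediate zone you miss: after translating in $t$ and rescaling by $\alpha_i=\mathrm{dist}(q_i,p_1)$, the approximate metrics converge to the \emph{cylindrical} metric $g_{cyl}$, and one concludes via $\sigma_j(g_{cyl}^{-1}A_{cyl}) = 2^{-j}\binom{n}{j}\tfrac{n-2j}{n}>0$. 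To salvage a direct argument you would need to introduce a third region with the cylinder as model and match the transition scales carefully.
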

\begin{proof}
We argue by contradiction. We fix an index $j \in \{1,\ldots, k-1 \}$ and we suppose that there exists a sequence of parameters $\{\e_i\}_{i\in \mathbb{N}}$ and a corresponding sequence of points $q_i \in M_{\e_i}$, ${i \in \mathbb{N}}$ such that 
\begin{itemize}
\item $\e_i \rightarrow 0$,\quad  as $i \rightarrow +\infty$,
\item $\sigma_j \big( g_{\e_i}^{-1} A_{g_{\e_i}} \big) \, (q_i)\, \leq \, 0 $, \quad  $i \in \N$.
\end{itemize}
Without loss of generality we can suppose, up to pass to a subsequence, that for every $i \in \N$ the point $q_i$ belongs to $M_1 \setminus B(p_1, \e_i)$. Depending on the behavior of the $q_i$'s, we have to distinguish three possible cases.

\medskip

{\bf Case 1:} There exists a subsequence of $q_i$'s such that 
$$
q_{i}\longrightarrow q_{\infty}\in M_{1}\setminus\{p_{1}\}.
$$
Since by construction the metrics $g_{\e_i}$'s  converge to $g_1$ on the compact sets of $M_1 \setminus \{ p_1 \}$ with respect to the $C^2$--norm, we deduce that $\sigma_j \big( g_1^{-1} A_{g_1}\big) \, (q_\infty) \, \leq \, 0$, which is a contradiction.

\medskip

{\bf Case 2:} There exists a subsequence of $q_i$'s such that 
$$
dist_{g_1}(q_{i},p_{1}) \, = \, \bigo{\e_i}, \quad \hbox{as}\quad i\rightarrow+\infty .
$$
Since $q_{i}\rightarrow p_{1}$, the sequence of points $q_{i}$ will stay definitely in the annulus $A(p_{1},1,\e_{i})$, which is mapped to $(\log \e_i , 0 ) \times \mathbb{S}^{n-1}$ via the asymptotic cylindrical coordinates $(t,\t)$. Setting $t_{i}:=t(q_{i})$, we have that, for large enough $i$'s, $(t_i,\t) \in [-C, 0] \times \Sp$, for some fixed positive constant $C>0$.
In this compact region we have that the Schouten tensors $A_{g_{\e_i}}$'s of the approximate solutions $g_{\e_i}$'s converge uniformly to the Schouten tensor $A_{g_\Sig}$ of the $\sigma_k$--Schwarzschild metric $g_\Sig$, according to the conformal transformation law \eqref{conf. Schouten}. The contradiction follows from \eqref{sigmaj}.

\medskip

{\bf Case 3:} There exists a subsequence of $q_i$'s such that 
$$
dist_{g_1}(q_{i},p_{1}) \, = \, {o}\,(\e_i)  \quad \hbox{as}\quad i\rightarrow+\infty .
$$
It is convenient to set $\a_i := dist_{g_1}(q_{i},p_{1}) $. Again, since $q_{i}\rightarrow p_{1}$, the sequence of points $q_{i}$ will stay definitely in the annulus $A(p_{1},1,\e_{i})$, which is mapped to $N_{1,\e_{i}} = (\log \e_i , 0 ) \times \mathbb{S}^{n-1}$ via the asymptotic cylindrical coordinates $(t,\t)$. In this case we have that $t_i = t(q_i) \rightarrow -\infty$, as $i \rightarrow +\infty$. To investigate the behavior of the Schouten tensors $A_{g_{\e_i}}$'s about the points $q_i$'s, it is preferable to translate and rescale all of our quantities by setting 
$$
\hat{g}_{\e_i}(t,\t) \,\, := \,\, \a_i^{-\frac{4k}{n-2k}} \, g_{\e_i}(t+t_i,\t) \,\, = \,\,\big[ \a_i^{-1} \,u_{\e_i}(t+t_i)\big]^{\frac{4k}{n-2k}} \,  (1+ c(t+t_i,\t)) \, g_{cyl}
$$
In terms of these new objects, we have by assumption that $\sigma_j \big(g_{cyl}^{-1} \, A_{\hat{g}_{\e_i}}   \big) (0, \t) \, \leq \, 0$. For any fixed positive constant $C>0$ we have now that on the compact subsets of the form $[-C, C] \times \Sp$ the functions $\a_i^{-1} u_{\e_i}(\, \cdot  \, + t_i)$ converge to $1$ in $C^2$--norm, we deduce that the Schouten tensors $A_{\hat{g}_{\e_i}}$'s converge uniformly to $A_{cyl}$.
A straightforward computation yields
\be
\label{sjc}
\sigma_j \big( g^{-1}_{cyl} \,A_{cyl} \big) & = & 2^{-j} \hbox{${n \choose j}$} \big( \tfrac{n-2j}{n} \big) \,\, > \,\, 0,
\ee
which is a contradiction.
\end{proof}

\


To introduce the analysis which follows, we recall that our
ultimate goal is to show that, up to choose the parameters $\e$ in a suitable range, it is possible to find a smooth perturbation $w$ of the conformal factor $u_{\e}$ such that
\be
\label{perturbation} \mathcal{N}_{\bar{g}}(u_{\e} + w) = 0\,,
\ee
where the nonlinear operator is defined as in \eqref{eq}.

\

As mentioned in the introduction, we want to solve the
fully nonlinear equation (\ref{perturbation}) by means of a fixed
point argument. To do that, we consider the Taylor expansion:
\be\label{equation}
\mathcal{N}_{\bar{g}}(u_{\e} + w) & : = & \mathcal{N}_{\bar{g}}(u_{\e}) \, + \, \mathbb{L}_{\bar{g}}(u_{\e}) [ w] \, + \, \mathcal{Q}_{\bar{g}}(u_{\e}) \,(w) ,
\ee
where according to \eqref{linope}
\be
\label{linearized def.} 
\mathbb{L}_{\bar{g}}(u_{\e})\,[w]  & := &  \left.
\frac{d}{ds} \right|_{s=0} \mathcal{N}_{\bar{g}}\, (u_{\e}  + sw)
\ee
represents the linearized operator of $\mathcal{N}_{\bg}$ around the
approximate solution $u_{\e}$ and
$$
\mathcal{Q}_{\bar{g}}(u_{\e})\,(w) \,\, := \,\, -  \int_0^1 \big[ \, \mathbb{L}_{\bar{g}}(u_{\e}) \, - \, \mathbb{L}_{\bar{g}}(u_{\e} + sw) \, \big]    \, [w] \, ds
$$
is the quadratic remainder.

\



Now, we are ready to study the mapping
properties of $\mathbb{L}_{\bg}(u_{\e})$. In particular we will find the
functional setting where the equation
\be\label{lineareq}
\mathbb{L}_{\bg}(u_{\e}) [w]  =  f\quad\hbox{in}\, M_{\e}
\ee
can be solved with $\e$--uniform {\em a priori} estimates. Combining this
with the estimates of the error term
\bea
\mathcal{E}_{\bg}(u_{\e})(w) & := & - \, \mathcal{N}_{\bg}(u_{\e}) - \mathcal{Q}_{\bg}(u_{\e}) \,(w)  ,
\eea
we will be able to solve the fixed point problem
\begin{eqnarray}
\label{fixed point} w & = & \mathbb{L}_{\bg}(u_{\e})^{-1} \, \circ \,
\mathcal{E}_{\bg}(u_{\e})(w) \,\, .
\end{eqnarray}

\



To solve \eqref{lineareq} we split our domain, namely the connected sum manifold $M_{\e}$, into the regions $\Oie:=M_{i}\setminus B(p_{i},\e)$, $i=1,2$. Notice that $\partial\Oue=\partial\Ode$ by construction. \

As a first step we will produce solutions $w_{i}$ to the Dirichlet problem
\bea
\left\{
\begin{split}
\mathbb{L}_{\bg}(u_{\e})\,[w_{i}] &= f \hs\Omega_{i,\e}\\
w_{i} &= 0 \hs\partial\Omega_{i,\e}
\end{split}
\right.
\eea
Clearly $w_{1}$ and $w_{2}$ have a $C^{0}$--matching on the common boundary $\{0\}\times\mathbb{S}^{n-1}$, but to produce a (weak) global solution on $M_{\e}$, one needs to improve this matching to be at least $C^{1}$. For this purpose we set
\be\label{solution}
w:=\left\{
\begin{split}
w_{1}+\bw_{1} \hs\Oue\\
w_{2}+\bw_{2} \hs\Ode
\end{split}
\right.
\ee
where $\bw_{i}$ are two corrections which verify the homogenous problem
\bea
\left\{
\begin{split}
\mathbb{L}_{\bg}(u_{\e})\,[\bw_{i}] &= 0 \hs\,\,\,\,\Omega_{i,\e}\\
\bw_{i} &= \psi \hs\partial\Omega_{i,\e}
\end{split}
\right.
\eea
with the same Dirichlet boundary data $\psi$ and the $C^{1}$--matching condition
$$\partial_{\nu}(w_{1}+\bw_{1})=-\partial_{\nu}(w_{2}+\bw_{2}),$$ where $\nu$ denotes the outward normal to $\Oue$. We want to point out that in the second part of this program, the datum will be the gap $\partial_{\nu}(w_{1}+w_{2})$ between the normal derivatives of $w_{1}$ and $w_{2}$ and the unknown will be represented by the Dirichlet boundary data $\psi$. The existence of such a function $\psi$ will be deduced from the invertibility of (the difference of) Dirichlet to Neumann maps (for a precise definition see Section \ref{DN}).

\

\

\section{Linear analysis}
\label{s:linear}

The aim of this section is to provide existence, uniqueness and {\em a priori} estimates for solutions to the linear problem
\be\label{eq:global}
\mathbb{L}_{\bg}(u_{\e})\,[w] &=& f \hs M_\e.
\ee
As anticipated in last part of the previous section, we start by dividing the connected sum manifold $M_\e$ into the subdomains $\Oue$ and $\Ode$ and since the situation is  symmetric we will focus for most part of the time on the domain $\Oue$ and we will study the problem
\be\label{eq:local}
\left\{
\begin{split}
\mathbb{L}_{\bg}(u_{\e})\,[w] &= f \hs\Oue\\
w &= 0 \hs\partial\Oue
\end{split}
\right.
\ee
Most part of the work here will amount to establish uniform {\em a priori} estimates for solutions to this problem which do not depend on the necksize parameter $\e$. To do that we will employ a blow--up technique which, in the limit, will lead us to analyze some model situations, depending on where the blow--up points are going to concentrate. As it will be made clear in the proof of Propsition \ref{stime:local2}, in two of the three possible cases, when the blow--up points concentrate on the neck region, we will take advantage of our geometric construction, whereas in the remaining one, when the blow--up points stay away from the gluing locus, we will exclude the blow--up phenomenon thanks to the {\em non degeneracy} condition \ref{nondegeneracy} and Corollary \ref{removable_cor}, which is the main issue of the following subsection.



\

\subsection{A removable singularities lemma}

This subsection, whose content is somehow independent of the rest of the paper, is concerned with the proof of a removable singularities result for the linearized $\sigma_k$--Yamabe equation on the puctured unit ball endowed with a conformally flat metric, see Corollary \ref{removable_cor}.

\

To begin, let $u=u(s,\t)$, $(s,\t)\in(0,+\infty)\times\mathbb{S}^{n-1}$, be a smooth solution to the equation 
$$\mathcal{N}_{{cyl}}(u):= \sigma_k  \left( B_{g_{u}}
  \right)  - \hbox{${n \choose k}$} \left( \tfrac{n-2k}{4k} \right)^k
   u^{\frac{2kn}{n-2k}} =0,$$ on $(0,+\infty)\times\mathbb{S}^{n-1}$. We recall that $g_{u}=u^{\frac{4k}{n-2k}}g_{cyl}$ and 
$$B_{g_{u}}=\tfrac{n-2k}{2k}g_{cyl}^{-1}\left[u^{2} A_{cyl} - 
\tfrac{2k}{n-2k}  u{\nabla^2 u}  + 
\tfrac{2kn}{(n-2k)^2}  {du \otimes du}  - 
\tfrac{2k^2}{(n-2k)^2}  {|du|^2}  g_{cyl}\right].
$$
Suppose to have the expansion $$u(s,\t)=e^{-\frac{n-2k}{2k}s}(1+b_{0}(s,\t)),$$ where $b_{0}(s,\t)=\bigo{e^{-2s}}$. Passing from cylindrical to the flat background metric on the punctured ball, this corresponds to the usual expansion of $g_{u}$ in normal coordinates centered at the removed point.

\begin{lem}\label{exp} Under this hypothesis, if $4\leq 2k<n$, then the solution $u$ verifies
$$u(s,\t)=v(s)(1+b(s,\t)),$$ where $v(s)=\cosh^{-\frac{n-2k}{2k}}(s-s_{0})(1+c(s))$, for some $s_{0}\in\mathbb{R}$, $c(s)=\bigo{e^{-2s}}$ and $b(s,\t)=\bigo{e^{-2ks}}$. 
\end{lem}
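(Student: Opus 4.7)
The plan is to compare $u$ with the one-parameter family of \emph{standard bubbles} $V_{s_{0}}(s) := \cosh(s - s_{0})^{-\alpha}$, where $\alpha := (n-2k)/(2k)$. A direct computation, analogous to the one carried out in Section \ref{SCH} for the $\sigma_{k}$--Schwarzschild profile, shows that each $V_{s_{0}}$ is an exact radial solution of $\mathcal{N}_{cyl}(V_{s_{0}}) = 0$: the conserved quantity $h_{V_{s_{0}}} := V_{s_{0}}^{2} - \alpha^{-2}\dot{V}_{s_{0}}^{2}$ reduces to $V_{s_{0}}^{2n/(n-2k)}$, so by the formulae \eqref{coeff} the endomorphism $B_{g_{V_{s_{0}}}}$ becomes a positive multiple of the identity; feeding this into $\sigma_{k}$ yields exactly $\binom{n}{k}\bigl(\tfrac{n-2k}{4k}\bigr)^{k} V_{s_{0}}^{2kn/(n-2k)}$. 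These bubbles are the cylindrical lifts, via the correspondence of the Remark in Section \ref{SCH}, of the Euclidean conformal factors $2^{\alpha}(1+|x|^{2})^{-\alpha}$ realising the round metric on $\mathbb{R}^{n}$. Matching $u = e^{-\alpha s}(1+b_{0})$, $b_{0} = \mathcal{O}(e^{-2s})$, against $V_{s_{0}}(s) = 2^{\alpha}e^{-\alpha(s-s_{0})}(1+\mathcal{O}(e^{-2s}))$ pins down a unique value of $s_{0}$.

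I then pass to ball coordinates. The hypothesis on $u$ is equivalent to the statement that $u_{\mathrm{ball}}(x) := |x|^{\alpha}u(-\log|x|,\,x/|x|)$ extends across $x = 0$ with $u_{\mathrm{ball}}(0) = 1$ and $u_{\mathrm{ball}}(x) = 1 + \mathcal{O}(|x|^{2})$. Standard elliptic regularity for the second-order fully nonlinear operator $u \mapsto \sigma_{k}(B_{g_{u}})$, which is elliptic in the positive cone $\Gamma_{k}^{+}$, upgrades this to $C^{\infty}$-regularity at the origin, so $u_{\mathrm{ball}}$ admits a Taylor expansion $u_{\mathrm{ball}}(x) = 1 + \sum_{m\geq 2} Q_{m}(x)$ with each $Q_{m}$ a homogeneous polynomial of degree $m$. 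Using the explicit form
$$
B_{g_{u_{\mathrm{ball}}}}\;=\;-u_{\mathrm{ball}}\,\nabla^{2}u_{\mathrm{ball}} \,+\, \tfrac{n}{n-2k}\,du_{\mathrm{ball}}\otimes du_{\mathrm{ball}} \,-\, \tfrac{k}{n-2k}|du_{\mathrm{ball}}|^{2}\,I
$$
(which is the Euclidean version of \eqref{endo}) and expanding the equation $\mathcal{N}(u_{\mathrm{ball}}) = 0$ order by order in $|x|$, the balance at order $|x|^{m-2}$ involves the Newton transform $T_{k-1}(-H)$ of the leading Hessian $H$ of $u_{\mathrm{ball}}$ at $0$. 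Since $u_{\mathrm{ball}}$ lies in $\Gamma_{k}^{+}$, $T_{k-1}(-H)$ is a positive-definite symmetric matrix, and the corresponding obstructions can be solved iteratively to show that the non-radial components of $Q_{m}$ vanish for each $m = 2, 3, \ldots, 2k-1$.

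Translating back to the cylinder, the surviving radial Taylor coefficients $\bar Q_{2}(|x|), \ldots, \bar Q_{2k-1}(|x|)$ of $u_{\mathrm{ball}}$ assemble into the factor $v(s)$, which by comparison with the closest bubble takes the form $v(s) = \cosh(s-s_{0})^{-\alpha}(1+c(s))$ with $c(s) = \mathcal{O}(e^{-2s})$, while the remaining Taylor tail $\mathcal{O}(|x|^{2k})$ becomes precisely $b(s,\theta) = \mathcal{O}(e^{-2ks})$ after dividing by $v$. I expect the main obstacle to be the rigorous execution of the order-by-order obstruction argument at Step 2: at each order $m$, the constraint on $Q_{m}$ is a system of linear conditions coupling the previously-determined coefficients through the Newton transforms of $-H$, and proving that these systems force the non-radial parts of $Q_{m}$ to vanish for all $m \leq 2k-1$ requires careful bookkeeping; an alternative route, which sidesteps the combinatorics, is to apply a moving-planes argument in the spirit of Caffarelli--Gidas--Spruck (adapted to the $\sigma_{k}$ equation by Li and Li) to force asymptotic radial symmetry, and then to use indicial-root analysis of the linearisation about $V_{s_{0}}$ together with the ansatz $\phi = V_{s_{0}}w$, via the conformal equivariance \eqref{confequi}, to get the advertised decay rate. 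The assumption $4 \leq 2k$ is exactly what enables the iterative improvement of the decay over $k-1$ stages, while $2k < n$ is needed for the ellipticity and for the underlying conformal bubble to exist.
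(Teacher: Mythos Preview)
Your order-by-order Taylor argument in ball coordinates has a genuine gap. At order $|x|^{m-2}$ the equation $\mathcal{N}(u_{\mathrm{ball}})=0$ yields a \emph{single scalar} constraint of the form
\[
\mathrm{tr}\big[T_{k-1}(-H)\cdot(-\nabla^{2}Q_{m})\big]\;+\;(\text{lower-order data})\;=\;(\text{RHS from }u^{p}),
\]
which is one equation on a space of homogeneous degree-$m$ polynomials. Positive-definiteness of $T_{k-1}(-H)$ tells you the second-order symbol is elliptic, but it cannot force the non-radial part of $Q_{m}$ to vanish: already at $m=2$ the condition $\sigma_{k}(-H)=\binom{n}{k}\big(\tfrac{n-2k}{4k}\big)^{k}$ is satisfied by many non-scalar Hessians $H$, and nothing in the pointwise jet calculus at a single order rules them out. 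So the sentence ``the corresponding obstructions can be solved iteratively to show that the non-radial components of $Q_m$ vanish'' is not a proof, and you acknowledge this is the main obstacle. Your fallback (Li--Li moving planes plus indicial analysis about $V_{s_{0}}$) could in principle deliver asymptotic radiality, but you would still owe an argument that the decay rate is exactly $\mathcal{O}(e^{-2ks})$, and none of this is carried out.

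The paper's proof proceeds by a completely different mechanism and never appeals to a Taylor expansion at the origin. Working in cylindrical coordinates, one expands the components of $B_{g_{u}}$ directly and exploits the algebraic identity $\sigma_{k}(B_{g_{u}})=\sum C_{\alpha\beta}^{lm}[\mathrm{tr}\,B^{l}]^{\alpha}[\mathrm{tr}\,B^{m}]^{\beta}$ to obtain a structural decomposition
\[
0=\mathcal{N}_{cyl}(u)\;=\;[A_{n,k}\,u(\partial_{s}u)^{-1}]\,\partial_{s}\big(h^{k}-u^{\frac{2kn}{n-2k}}\big)\;+\;P_{2k}(u,\partial_{s}u)\cdot\Delta_{\theta}b_{0}\;+\;Q,
\]
where the remainder $Q$ is at least quadratic in $b_{0}$ and its angular derivatives, hence $Q=\mathcal{O}(u^{2k}e^{-4s})$. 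The crucial point, which has no analogue in your sketch, is that for $k\geq 2$ the first term is of size $\mathcal{O}(e^{-ns})$ (since $h=\mathcal{O}(u^{2}e^{-2s})$ makes $h^{k}$ and $u^{2kn/(n-2k)}$ both $\mathcal{O}(e^{-ns})$), which is \emph{strictly smaller} than the coefficient $P_{2k}\simeq u^{2k}=\mathcal{O}(e^{-(n-2k)s})$ multiplying $\Delta_{\theta}b_{0}$. Projecting on each nonzero eigenmode $\phi_{j}$ then forces $b_{0}^{j}=\mathcal{O}(e^{-4s})$; feeding this back into $Q$ improves it to $\mathcal{O}(u^{2k}e^{-6s})$, and iterating this bootstrap $k-1$ times yields $b=\mathcal{O}(e^{-2ks})$. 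The radial statement for $v$ then follows from the zero-mode projection, which becomes an ODE for $h_{v}^{k}-v^{2kn/(n-2k)}$. This gain-by-degree mechanism---the $\sigma_{k}$ term being $k$-multilinear in a quantity that is itself $\mathcal{O}(e^{-2s})$-small---is what actually drives the result, and it is invisible in the flat Taylor picture you propose.
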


\begin{proof} First we observe that
\bea
du&=&\p_{s}u\,\,ds+u(1+b_{0})^{-1}\p_{j}b_{0}\,d\t^{j}=\p_{s}u\,\,ds+u(1+b_{0})^{-1}d_{\t}b_{0},\\
\n^{2}_{cyl}u&=&\p^{2}_{s} u\,ds\otimes ds+\p^{2}_{sj}u\,(ds\otimes d\t^{j}+d\t^{j}\otimes ds)+\n^{2}_{\t}u,
\eea
where $d_{\t}$ and $\n^2_{\t}$ denote the exterior differential and the Hessian computed with respect to the standard differential structure and standard metric of $\mathbb{S}^{n-1}$. Hence, the components of $B_{g_{u}}$ can be written as
\bea
\tfrac{2k}{n-2k}(g_{cyl}B_{g_{u}})_{ss}&=&-\tfrac{1}{2}u^{2}-\tfrac{2k}{n-2k}u\,\p^{2}_{s}u+\tfrac{2k(n-k)}{(n-2k)^{2}}|\p_{s}u|^{2}-\tfrac{2k^{2}}{(n-2k)^{2}}u^{2}(1+b_{0})^{-2}|d_{\t}b_{0}|^{2}\\
&=&-\tfrac{1}{2}u^{2}-\tfrac{2k}{n-2k}u\,\p^{2}_{s}u+\tfrac{2k(n-k)}{(n-2k)^{2}}|\p_{s}u|^{2}+\bigo{u^{2}e^{-4s}},\\
&&\\
\tfrac{2k}{n-2k}(g_{cyl}B_{g_{u}})_{sj}&=&\big(\tfrac{2k}{n-2k}\big)^{2}u\,\p_{s}u(1+b_{0})^{-1}\p_{j}b_{0}+\tfrac{2k}{n-2k}u^{2}(1+b_{0})^{-2}\p_{j}b_{0}\p_{s}b_{0}-\tfrac{2k}{n-2k}u^{2}(1+b_{0})^{-1}\p^{2}_{sj}b_{0}\\
&=&\big(\tfrac{2k}{n-2k}\big)^{2}u\,\p_{s}u\,\p_{j}b_{0}-\tfrac{2k}{n-2k}u^{2}\p^{2}_{sj}b_{0}+\bigo{u^{2}e^{-4s}},\\
&&\\
\tfrac{2k}{n-2k}(g_{cyl}B_{g_{u}})_{ij}&=&\tfrac{1}{2}u^{2}g^{\t}_{ij}-\tfrac{2k}{n-2k}u^{2}(1+b_{0})^{-1}(\n^{2}_{\t}b_{0})_{ij}+\tfrac{2kn}{(n-2k)^{2}}u^{2}(1+b_{0})^{-2}(d_{\t}b_{0}\otimes d_{\t}b_{0})_{ij}\\
&&-\tfrac{2k^{2}}{(n-2k)^{2}}|\p_{s}u|^{2}g^{\t}_{ij}-\tfrac{2k^{2}}{(n-2k)^{2}}u^{2}(1+b_{0})^{-2}|d_{\t}b_{0}|^{2}g^{\t}_{ij}\\
&=&\tfrac{1}{2}\left(u^{2}-\big(\tfrac{2k}{n-2k}\big)^{2}|\p_{s}u|^{2}\right)g^{\t}_{ij}-\tfrac{2k}{n-2k}u^{2}(\n^{2}_{\t}b_{0})_{ij}+\bigo{u^{2}e^{-4s}}\
\eea
For algebraic reasons $\sigma_{k}(B_{g_{u}})$ can be written as
$$\sigma_{k}(B_{g_{u}})\, = \,\hbox{$\sum_{\a l+\b m=k}$}  \,\, C_{\a\b}^{lm}\,\left[tr(B_{g_{u}}^{l})\right]^{\a}\left[tr(B_{g_{u}}^{m})\right]^{\b},$$
where $C_{\a\b}^{lm}$ are 
constant coefficients and we assume that $\a,\b,l,m\in\mathbb{N}$, with $0\leq m,l\leq k$. A direct computation shows that for every $0\leq l \leq k$ 
$$tr(B_{g_{u}}^{l})=[(B_{g_{u}})^{s}_{s}]^{l}+\hbox{$\sum_{i=1}^{n-1}$}\left(B_{g_{u}}^{l}\right)^{i}_{i}+\bigo{u^{2l}e^{-4s}}.$$
Moreover, if we set 
$$h=h(u):=u^{2}-\big(\tfrac{2k}{n-2k}\big)^{2}|\p_{s}u|^{2},$$ we have
\bea2^{l}\,\hbox{$\sum_{i=1}^{n-1}$}  \left(B_{g_{u}}^{\, l}\right)^{i}_{i}&=&h^{l}\,\,tr\left(\mathbb{I}_{n-1}-\tfrac{4k}{n-2k}u^{2}h^{-1}\n^{2}_{\t}b_{0}+\bigo{u^{2}h^{-1}e^{-4s}}\right)^{l}\\
&=&h^{l}\,\,tr\left(\mathbb{I}_{n-1}-\tfrac{4kl}{n-2k}u^{2}h^{-1}\n^{2}_{\t}b_{0}+\bigo{u^{2}h^{-1}e^{-4s}}\right)\\
&=&h^{l}\left((n-1)-\tfrac{4kl}{n-2k}u^{2}h^{-1}\D_{\t}b_{0}\right)+\bigo{u^{2l}e^{-4s}}.
\eea
In force of this considerations we obtain that
\bea 0\,\,\,=\,\,\,N_{{cyl}}(u) &=&\sigma_k  \left( B_{g_{u}}
\right)  - \hbox{${n \choose k}$} \left( \tfrac{n-2k}{4k} \right)^k u^{\frac{2kn}{n-2k}}\\
&=& \big[ A_{n,k}\,u\,(\p_{s}u)^{-1} \big] \cdot  \p_{s}\big(h^{k}-u^{\frac{2kn}{n-2k}}\big) \, + \, \big[ 
\,P_{2k}(u,\p_{s}u) \big] \cdot \D_{\t}b_{0} \, +  \, Q(u,\p u,\p^{2}u),
\eea
where $A_{n,k}$ is a constant only depending on $n$ and $k$, $P_{2k}(\cdot,\cdot\cdot)$ is an homogeneous polynomial of degree $2k$ and the reminder $Q(u,\p u, \p^{2}u)$ verifies the estimate $Q(u,\p u, \p^{2}u)=\bigo{u^{2k}e^{-4s}}$. The gain $e^{-4s}$ is due to the presence of (at least) quadratic terms in $b_{0}$ and its derivatives. Using the eigenfunctions decomposition, we write 
$$
b_{0}(s,\t)=b_{0}^{0}(s)+ \hbox{$\sum_{j=1}^{+\infty}$} \,   b_{0}^{j}(s)\,\phi_{j}(\t)\quad \hbox{and}\quad \D_{\t}b_{0}(s,\t)=- \hbox{ $ \sum_{j=1}^{+\infty} $} \, \lambda_{j}\,b_{0}^{j}(s)\,\phi_{j}(\t),
$$ 
where $- \D_\t \, \phi_j \, = \, \lambda_j \, \phi_j$, $j \in \mathbb{N}$. Since we have
$$
h^{k}=\bigo{e^{-ns}}\quad \hbox{and}\quad u^{\frac{2kn}{n-2k}}=\bigo{e^{-ns}},
$$
$k\geq 2$ and $n\geq 5$, we infer from the equation above that
$$
b_{0}^{j}(s)=\bigo{e^{-4s}},\,\,\,j\geq 1.
$$
So we have found that $u$ expands as
$$
u(s,\t)=v(s)(1+b(s,\t)),
$$
where 
$$v(s):=e^{-\frac{n-2k}{2k}s}(1+b_{0}^{0}(s))\quad \hbox{and} \quad b(s,\t)\, := \, (1+b_{0}^{0}(s))^{-1} \,  \hbox{$\sum_{j=1}^{+\infty}$} \,  b_{0}^{j}(s)\,\phi_{j}(\t)  \, = \, \bigo{e^{-4s}}.$$
We have
\bea
u(\p_{s}u)^{-1} & = & -\tfrac{2k}{n-2k}(1+\bigo{\p_{s}b_{0}^{0}}), \\
h^{k}-u^{\frac{2kn}{n-2k}} & = & h_{v}^{k}-v^{\frac{2kn}{n-2k}}+\bigo{v^{2k}(\p_{s}b_{0}^{0})^{k-1}(\p_{s}b)}+\bigo{v_{1}^{\frac{2kn}{n-2k}}b}, \\
\D_{\t}b_{0} & = & (\D_{\t}b) \, (1+b_{0}^{0}),
\eea
where
$$
h_{v}:=v^{2}-\big(\tfrac{2k}{n-2k}\big)^{2}\dot{v}^{2}.
$$
Combining the new expression for $u$ with the same formal computation used to give a first expansion for the coefficients of $B_{g_{u}}$, we obtain
\be\label{eqsub}
- \big[ \, A_{n,k}\big(\tfrac{2k}{n-2k}\big)\, \big] \cdot \p_{s}\left(h_{v}^{k}-v^{\frac{2kn}{n-2k}}\right)+
\big[\,P_{2k}(v,\p_{s}v) \, \big] \cdot \D_{\t}b \, + \, R \, + \, S \, = \, 0,
\ee
where 
$$R \, = \, \bigo{e^{-(n+2)s}}\, \quad\quad  \hbox{and} \quad \quad \,S \,= \,\bigo{(\D_{\t}b)e^{-{(n-2k+2)}s}}.
$$
This comes from the fact that the leading term of the reminders $R$ and $S$ are given by $\p_{s}b_{0}^{0}\,\p_{s}\big(h_{v}^{k}-v^{\frac{2kn}{n-2k}}\big)$ and $v^{2k}b_{0}^{0}\,\D_{\t}b$ respectively. At this level, we know that $\Delta_{\t}b=\bigo{e^{-4s}}$, so that $S=\bigo{e^{-(n-2k+6)}}$. If $k=2$, then $R$ and $S$ decay 
with the same velocity, namely $\bigo{e^{-(n+2)}}$. For $k>2$, we have $n-2k+6< n+2$. Using again the eigenfunction decomposition, since the first term of the left hand side is radial, we obtain that 
$$b(s,\t)=\bigo{e^{-6s}}.$$
Hence we have obtained an improvement of the expansion for the function $b$ which will improve the estimate of the reminder $S$. Iterating this argument, we will have that, after a finite number of steps, the decay rate of $S$ will be comparable with the one of $R$, which remains fixed during the bootstrap. At the end, both in the cases $k=2$ and $k>2$, we obtain 
$$
b(s,\t)=\bigo{e^{-2ks}}.
$$
Moreover, projecting $\eqref{eqsub}$, we obtain
\be\label{asym}
- \big[ \, A_{n,k}\big(\tfrac{2k}{n-2k}\big) \, \big] \cdot \p_{s}\big(h_{v}^{k}-v^{\frac{2kn}{n-2k}}\big) \,+ \, R^{0} \, + \, S^{0} \, = \, 0,
\ee
where $R^{0}$ and $S^{0}$ are given by
$$
R^{0}(s)\, := \, \int_{\mathbb{S}^{n-1}}R\,\phi_{0}\,\, dV_{\mathbb{S}^{n-1}}\quad \quad  \hbox{and} \quad \quad S^{0}(s) \,:= \,\int_{\mathbb{S}^{n-1}}S\,\phi_{0}\,\, dV_{\mathbb{S}^{n-1}},
$$
and both of them are $\bigo{e^{-(n+2)s}}$. It is now easy to see from equation \eqref{asym} that $v(s)\simeq \cosh^{-\frac{n-2k}{2k}}(s-s_{0})$ for some $s_{0}\in\mathbb{R}$. Plugging the ansatz $v(s)= \cosh^{-\frac{n-2k}{2k}}(s-s_{0})(1+c(s))$ into the equation \eqref{asym} we have that the reminder $c(s)$ is estimated as a $\bigo{e^{-2s}}$.

\end{proof}

We consider now a conformal perturbation of the metric
$g_u$, namely, for $r \in \R$ and $w \in
\mathcal{C}^{2}(\mathbb{R}\times\mathbb{S}^{n-1})$, we consider the
assignment
\begin{eqnarray*}
r & \mapsto & g_r \,\, := \,\, (u + rw)^{\frac{4k}{n-2k}} \cdot \,
dt^2 \otimes \,\, d\theta^2 \,\,.
\end{eqnarray*}
Obviously $g_0  =  g_r$. Let now $A_{{r}}$ be the Schouten tensor of
the metric $g_r$, and let $B_{{r}}$ be the symmetric $(1,1)$-tensor
defined by
\begin{eqnarray*}
B_{{r}} & := & \tfrac{n-2k}{2k} \, \, \,
({u+rw})^{\frac{2n}{n-2k}} \, \, \, g_{r}^{-1} \cdot A_{{r}} \,\,.
\end{eqnarray*}
Again $A_0  =  A_{g_u}$ and $B_0  =  B_{g_u}$. We compute
\begin{eqnarray}\label{lin}
 \quad\quad \quad  \mathbb{L}_{cyl}(u)[w]:= \left.\frac{d}{dr}\right|_{r=0} \mathcal{N}_{{cyl}} (u+rw)  = 
\left.\frac{d}{dr}\right|_{r=0}  \sigma_k \left(  B_{r} \, \right)
 -  \hbox{ ${n \choose k}$} \left( \tfrac{n-2k}{4k}
\right)^k  \left.\frac{d}{dr}\right|_{r=0}
(u+rw)^{\frac{2nk}{n-2k}} 
\end{eqnarray}
To calculate the derivative of $\sigma_k\left( B_r \right)$, we use
the formula
\begin{eqnarray*}
\frac{d}{dr}\,\, \sigma_k\left( B_r \right) & = & {\rm tr} \,  \,\,
T_{k-1} \left( B_r \right)  \, \cdot \, \frac{d  B_r}{dr} \,\, ,
\end{eqnarray*}
where, for an integer $0 \leq h \leq n$, $T_h \left( B_r \right)$ is
defined as
\begin{eqnarray*}
T_h(B_r) & := & \hbox{$\sum_{j=0}^{h}$}\,\, (-1)^j \,\,\, \sigma_{h-j}(B_r)
\,\, \, B_r^j
\end{eqnarray*}
and it is known as the $h$-th Newton transform of $B_r$ (in the
formula above we use the conventions: $B^0 = \mathbb{I}_n$ and
$\sigma_0(B_r) = 1$). As a consequence we get:
\begin{eqnarray}
\label{formlin} \left. \frac{d}{dr}\right|_{r=0}\,\,
\sigma_k\left( B_r \right) & = & \hbox{$\sum_{j=0}^{k-1}$} \,\,(-1)^j \,\,\,
\sigma_{k-1-j} \left( B_0 \right) \,\,\, {\rm tr}  \,\,B^j_0 \cdot
\left. \frac{d B_r}{dr}\right|_{r=0}  \,\, .
\end{eqnarray}

From the previous lemma we know that
$$u(s,\t)=v(s)(1+b(s,\t)),$$ where $v(s)=\cosh^{-\frac{n-2k}{2k}}(s-s_{0})(1+c(s))$, for some $s_{0}\in\mathbb{R}$, $c(s)=\bigo{e^{-2s}}$ and $b(s,\t)=\bigo{e^{-2ks}}$. Hence, the components of $B_{g_{u}}$ can be written as
\bea
\tfrac{2k}{n-2k}(g_{cyl}B_{0})_{ss}&=&-\tfrac{1}{2}u^{2}-\tfrac{2k}{n-2k}u\,\p^{2}_{s}u+\tfrac{2k(n-k)}{(n-2k)^{2}}|\p_{s}u|^{2}-\tfrac{2k^{2}}{(n-2k)^{2}}u^{2}(1+b)^{-2}|d_{\t}b|^{2}\\
&=&-\tfrac{1}{2}v^{2}-\tfrac{2k}{n-2k}v\,\ddot{v}+\tfrac{2k(n-k)}{(n-2k)^{2}}|\dot{v}|^{2}+\bigo{v^{2}e^{-2ks}},\\
& &\\
\tfrac{2k}{n-2k}(g_{cyl}B_{0})_{sj}&=&\big(\tfrac{2k}{n-2k}\big)^{2}u\,\p_{s}u(1+b)^{-1}\p_{j}b+\tfrac{2k}{n-2k}u^{2}(1+b)^{-2}\p_{j}b\p_{s}b-\tfrac{2k}{n-2k}u^{2}(1+b)^{-1}\p^{2}_{sj}b\\
&=&\bigo{v^{2}e^{-2ks}},\\
&&\\
\tfrac{2k}{n-2k}(g_{cyl}B_{0})_{ij}&=&\tfrac{1}{2}u^{2}g^{\t}_{ij}-\tfrac{2k}{n-2k}u^{2}(1+b)^{-1}(\n^{2}_{\t}b)_{ij}+\tfrac{2kn}{(n-2k)^{2}}u^{2}(1+b)^{-2}(d_{\t}b\otimes d_{\t}b)_{ij}\\
&&-\tfrac{2k^{2}}{(n-2k)^{2}}|\p_{s}u|^{2}g^{\t}_{ij}-\tfrac{2k^{2}}{(n-2k)^{2}}u^{2}(1+b)^{-2}|d_{\t}b|^{2}g^{\t}_{ij}\\
&=&\tfrac{1}{2}\left(v^{2}-\big(\tfrac{2k}{n-2k}\big)^{2}|\dot{v}|^{2}\right)g^{\t}_{ij}+\bigo{v^{2}e^{-2ks}}=\tfrac{1}{2}h_{v}\,g^{\t}_{ij}+\bigo{v^{2}e^{-2ks}}\
\eea
It is easy to see that:
\begin{eqnarray*}
\left. \frac{d \, (B_r)^s_s }{dr}\right|_{r=0} & = & -\,\,v \,\,
\partial_s^2 w \,\, + \,\, \tfrac{2(n-k)}{n-2k} \, \dot{v} \,\,
\partial_s w \,\, - \,\, \left(\, \tfrac{n-2k}{2k} \, v \, + \, \ddot{v}  \,
\right) \,\, w +A^s_{s}[w]\nonumber\\
\left. \frac{d \, (B_r)^{\, i}_j }{dr}\right|_{r=0} & = & - \,\, v
\,\, g_{\, \theta}^{il} \, \left(\nabla^2_\theta \, w \right)_{lj}
\,\, - \,\, \tfrac{2k}{n-2k} \, \dot{v} \, \delta^{\, i}_j \,\,
\partial_s w \,\, + \,\, \tfrac{n-2k}{2k} \, v \, \delta^{\,i}_j
\,\, w +A^{i}_{j}[w]\\
\left. \frac{d \, (B_r)^{\, s}_j }{dr}\right|_{r=0} & = & - \,\, v
\,\, \partial_{s}\partial_j w \,\, + \,\, \tfrac{n}{n-2k} \, \dot{v}+
\,\,
\partial_j w +A^{s}_{j}[w]\nonumber \\
\left. \frac{d \, (B_r)^{i}_{ s} }{dr}\right|_{r=0} & = & g_{\,
\theta}^{il} \, \left( \, -\,\,v \,\, \partial_s \partial_l w  \,\,
+ \,\, \tfrac{n}{n-2k} \, \dot{v} \,\, \partial_l w \, \right)+A^{i}_{s}[w],
\nonumber
\end{eqnarray*}
where $A^s_{s},A^{i}_{j},A^{s}_{j},A^{i}_{s}$, $i,j=1,\dots,n-1$, are second order linear operators whose coefficients depend on $v$, $b$ and their derivatives up to order two and can be estimated as $\bigo{v\,e^{-2ks}}$. Hence, from \eqref{formlin}, we obtain that the linearized operator \eqref{lin} splits in
\bea
\mathbb{L}_{cyl}(u)[w]=\mathbb{L}_{cyl}(v)[w]+\mathbb{P}(v,\dot{v},\ddot{v},b,\partial b,\partial^{2}b)[w],
\eea
where $\mathbb{P}$ is a second order linear operators with coefficients estimated by $\bigo{v^{2k-1}e^{-2ks}}$. Now, we recall that from Lemma \ref{exp} we have
$$
v(s)=\bar{v}(s)(1+c(s)),
$$ 
where $\bar{v}(s):=\cosh^{-\frac{n-2k}{2k}}(s-s_{0})$ for some $s_{0}\in\mathbb{R}$ and $c(s)=\bigo{e^{-2s}}$. Hence we can split $\mathbb{L}_{cyl}(v)$ as 
\be\label{split2}
\mathbb{L}_{cyl}(v)[w]=\mathbb{L}_{cyl}(\bar{v})[w]+\mathbb{M}(v,\dot{v},\ddot{v})[w],
\ee
where $\mathbb{M}$ is a second order linear operator with coefficients estimated by $\bigo{e^{-\frac{2kn-n-2k}{2k}s}e^{-2s}}$ and $\mathbb{L}_{cyl}(\bar{v})$ is given by
\be\label{splitconj}
\mathbb{L}_{cyl}(\bar{v})[w]=-C_{n,k}\,\bar{v}^{\frac{k(n-2)}{n-2k}}\,\big(\p^{2}_{s}+\Delta_{\mathbb{S}^{n-1}}-\big(\tfrac{n-2}{2}\big)^{2}+\tfrac{n(n+2)}{4}\bar{v}^{\frac{4k}{n-2k}}\big)\big[\bar{v}^{\frac{n(k-1)}{n-2k}}w\big],
\ee
where $C_{n,k} \,\, = \,\,  {n-1 \choose k-1} \left( \tfrac{n-2k}{4k}
\right)^{k-1}$. Hence it is easy to see that the coefficients of $\mathbb{L}_{cyl}(\bar{v})$ can be estimated by $\bigo{e^{-\frac{2kn-n-2k}{2k}s}}$. As a consequence we have that, for $s\sim+\infty$, the relevant part of the linearized operator $\mathbb{L}_{cyl}(u)$ is given by $\mathbb{L}_{cyl}(\bar{v})$. 

\

In force of these observations we are in the position to prove the following
\begin{lem}\label{removable} Suppose that $u\in C^{2}((0,+\infty)\times\mathbb{S}^{n-1})$ verifies 
$$
\mathcal{N}_{cyl}(u)=0 \quad \hbox{on} \quad (0,+\infty)\times\mathbb{S}^{n-1}
$$
as well as the expansion
$u(s,\t)=e^{-\frac{n-2k}{2k}s}(1+b_{0}(s,\t))$ with $b_{0}(s,\t)=\bigo{e^{-2s}}$.\\
Let $w\in C^{2}((0,+\infty)\times\mathbb{S}^{n-1})$ be such that
$$
\begin{cases}
\hspace{1.94cm}\mathbb{L}_{cyl}(u)[w]&= \,\,\,\,0 \quad  \quad \hbox{on} \quad (0,+\infty)\times\mathbb{S}^{n-1}\\
(\cosh s)^{-\frac{n(k-1)}{2k}}|w(s,\t)| & \leq  \,\,\,\, C\,e^{\delta s},
\end{cases}
$$
for some positive constant $C>0$ and some weight $-\tfrac{n-2}{2}<\delta<\tfrac{n-2}{2}$. Then 
$$
(\cosh s)^{-\frac{n(k-1)}{2k}}|w(s,\t)|\leq C\,e^{-\frac{n-2}{2} \, s}.
$$
\end{lem}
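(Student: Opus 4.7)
The strategy is to reduce the problem to a standard indicial-root analysis for an ODE system obtained after conjugating the equation to (a perturbation of) the conformal Laplacian on the round sphere pulled back to the cylinder, and then separating variables via spherical harmonics.

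\emph{Step 1 (conjugation).} Set $\widetilde w := \bar v^{\,n(k-1)/(n-2k)}\, w$. By formula \eqref{splitconj},
\[
\mathbb{L}_{cyl}(\bar v)[w] \;=\; -\,C_{n,k}\,\bar v^{\,k(n-2)/(n-2k)}\,\mathcal{L}_0[\widetilde w],
\qquad
\mathcal{L}_0 := \partial_s^2 + \Delta_\theta - \bigl(\tfrac{n-2}{2}\bigr)^2 + \tfrac{n(n+2)}{4}\,\bar v^{\,4k/(n-2k)}.
\]
Combining this identity with the splittings $\mathbb{L}_{cyl}(u)=\mathbb{L}_{cyl}(v)+\mathbb{P}$ and $\mathbb{L}_{cyl}(v)=\mathbb{L}_{cyl}(\bar v)+\mathbb{M}$ derived above, the assumed equation $\mathbb{L}_{cyl}(u)[w]=0$ rewrites as $\mathcal{L}_0[\widetilde w] \;=\; \mathcal{R}[\widetilde w]$, where $\mathcal R$ is a second-order linear operator. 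Tracking the conjugation weight $\bar v^{-k(n-2)/(n-2k)}\sim e^{(n-2)s/2}$ against the decay rates provided by Lemma \ref{exp} for the coefficients of $\mathbb{M}$ and $\mathbb{P}$, together with the growth $w = \bar v^{-n(k-1)/(n-2k)}\widetilde w \sim e^{\,n(k-1)s/(2k)}\widetilde w$, shows that the coefficients of $\mathcal R$ decay at least like $e^{-2s}$ as $s\to+\infty$. The zeroth-order potential $\tfrac{n(n+2)}{4}\cosh^{-2}(s-s_0)$ appearing in $\mathcal L_0$ is similarly integrable at $+\infty$.

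\emph{Step 2 (spherical harmonic decomposition and ODE asymptotics).} Expand $\widetilde w(s,\theta) = \sum_{j\ge 0} \widetilde w^j(s)\,\phi_j(\theta)$, with $-\Delta_\theta \phi_j = \lambda_j\phi_j$ and $\lambda_j=j(j+n-2)$. Projecting onto the $j$-th eigenspace gives
\[
\ddot{\widetilde w}^j - \mu_j^2\,\widetilde w^j \;=\; \bigl[\,O(e^{-2s})\bigr]\,\widetilde w^j + \bigl[\,O(e^{-2s})\bigr]\,\dot{\widetilde w}^j,
\qquad \mu_j := \sqrt{\lambda_j + \bigl(\tfrac{n-2}{2}\bigr)^{2}},
\]
so that the indicial roots at $+\infty$ are $\pm\mu_j$. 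Note that $\mu_0=\tfrac{n-2}{2}$ and $\mu_j\ge \tfrac{n}{2}$ for every $j\ge 1$. Since the perturbation is integrable at infinity, a standard asymptotic theorem for ODEs (Levinson type, via variation of parameters) produces two independent solutions asymptotic to $e^{\pm \mu_j s}$. The hypothesis $|\widetilde w^j(s)|\le C\,e^{\delta s}$ with $\delta<\tfrac{n-2}{2}\le\mu_j$ then forces the coefficient of the growing mode $e^{\mu_j s}$ to vanish, so that $|\widetilde w^j(s)|\le C\,e^{-\mu_j s}\le C\,e^{-(n-2)s/2}$. Summing in $j$ via Parseval and converting angular $L^2$-estimates into pointwise ones by elliptic regularity on $\mathbb{S}^{n-1}$ yields $|\widetilde w(s,\theta)|\le C\,e^{-(n-2)s/2}$; undoing the conjugation is exactly the stated conclusion.

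\emph{Main obstacle.} The genuinely delicate point is the bookkeeping in Step~1: one must verify that after conjugation the remainder $\mathcal R$ is not merely bounded but in fact of size $O(e^{-2s})$, so that the ODE asymptotics applies uniformly. This relies essentially on the precise decay rates for $b_0$ and $c$ provided by Lemma \ref{exp}, which must match the conjugation weight $e^{(n-2)s/2}$ to leave exactly two powers of $e^{-s}$ in the remainder. Once this cancellation is established, the argument in Step~2 is essentially mechanical ODE asymptotics.
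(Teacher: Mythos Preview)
Your approach is essentially the same as the paper's: conjugate by the power of $\bar v$ coming from formula \eqref{splitconj}, reduce to (a perturbation of) the operator $\partial_s^2+\Delta_\theta-\bigl(\tfrac{n-2}{2}\bigr)^2+\tfrac{n(n+2)}{4}\bar v^{4k/(n-2k)}$, expand in spherical harmonics, and read off the decay from the indicial roots $\pm\mu_j$. The only cosmetic difference is that the paper first replaces $w$ by an auxiliary $\bar w$ solving the \emph{clean} equation $\mathbb L_{cyl}(\bar v)[\bar w]=0$ exactly (which, since $\bar v$ is radial, separates into genuinely decoupled ODEs), whereas you keep the full equation and treat $\mathbb M+\mathbb P$ as an $O(e^{-2s})$ perturbation.

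One small caveat on your Step~2: because the coefficients of $\mathcal R$ (coming from $\mathbb P$) depend on $\theta$ through $b$, the projected equation for $\widetilde w^j$ is not literally of the form you wrote; the right-hand side couples to all modes $\widetilde w^i$. This is harmless---you can either run the Levinson argument on the coupled system, or simply bound the right-hand side by $O(e^{-2s})\cdot\sup_\theta|\widetilde w|\le C\,e^{(\delta-2)s}$ using the a priori bound and bootstrap---but the displayed ODE as stated is slightly misleading. The paper's shortcut of passing to $\bar w$ avoids this bookkeeping at the cost of being vaguer about what ``the asymptotic behavior coincides'' actually means.
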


\begin{proof} First of all we notice that the case $k=1$ is well known (see for example \cite{lp}). As we have already seen, for $k\geq 2$, the relevant part of the operator $\mathbb{L}_{cyl}(u)$ at $s\sim +\infty$ is given by $\mathbb{L}_{cyl}(\bar{v})$, defined as above. Hence, the asymptotic behavior of $w$ coincides with the one of a function $\bar{w}$ which satisfies 
$$
\mathbb{L}_{cyl}(\bar{v})[\bar{w}]=0\quad\hbox{on}\quad(0,+\infty)\times\mathbb{S}^{n-1}
$$  
and $(\cosh s)^{-\frac{n(k-1)}{2k}}|\bar{w}(s,\t)|\leq C\,e^{\delta s}$, with $C$ and $\delta$ as above. From \eqref{splitconj} we have that $z:=(\cosh s)^{-\frac{n(k-1)}{2k}}\bar{w}$ satisfies
$$
\big(\p^{2}_{s}+\Delta_{\mathbb{S}^{n-1}}-\big(\tfrac{n-2}{2}\big)^{2}+\tfrac{n(n+2)}{4}\cosh^{-2} (s-s_{0})\big)[z]=0\quad\hbox{on}\quad(0,+\infty)\times\mathbb{S}^{n-1}
$$
and $|z(s,\t)|\leq C\,e^{\delta s}$. Projecting along the eigenfunctions of $\Delta_{\mathbb{S}^{n-1}}$ and using standard ODE's arguments we obtain that
$$
|z(s,\t)|\leq C\,e^{-\frac{n-2}{2}s}.
$$
The statement follows at once.
\end{proof}
Thanks to Lemma \ref{removable} we are now able to prove the following removable singularities result
\begin{cor}\label{removable_cor} Let $g=(1+b_{0})^{\frac{4k}{n-2k}}g_{\mathbb{R}^{n}}$ be a conformally flat metric defined on a geodesic ball $B(p,1)$ verifying the equation
$$\sigma_{k}(g^{-1}A_{g})=2^{-k}\hbox{${n \choose k}$}.$$ Suppose $\bar{w}$ satisfies in the sense of distributions
$$\mathbb{L}_{\mathbb{R}^{n}}(1+b_{0})[\bar{w}]=0\quad\hbox{on}\quad B^{*}(p,1)$$ with $|\bar{w}(q)|\leq C |dist_{g}(q,p)|^{-\mu}$ for any $q\in B^{*}(p,1)$ for some positive constant $C>0$ and for some weight parameter $0<\mu<n-2$. Then $\bar{w}$ is a bounded smooth function on $B(p,1)$ and satisfies the equation above on the entire ball. 
\end{cor}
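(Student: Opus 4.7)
The plan is to lift everything to the cylinder via the change of variables $s=-\log|x|$, $\theta=x/|x|$, which maps $B^{*}(p,1)$ diffeomorphically onto $(0,+\infty)\times\mathbb{S}^{n-1}$ and pulls back the flat metric as $g_{\mathbb{R}^{n}}=e^{-2s}g_{cyl}=\tilde{u}^{\,\frac{4k}{n-2k}}g_{cyl}$, where $\tilde{u}(s):=e^{-\frac{n-2k}{2k}s}$. In these coordinates the metric $g$ reads $u^{\frac{4k}{n-2k}}g_{cyl}$ with $u(s,\theta)=\tilde{u}(s)(1+b_{0}(s,\theta))$, so the constant $\sigma_{k}$-curvature assumption becomes $\mathcal{N}_{cyl}(u)=0$ on $(0,+\infty)\times\mathbb{S}^{n-1}$ and Lemma \ref{exp} provides the refined expansion $u=v(1+b)$ needed to invoke Lemma \ref{removable}.

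Next, I would transfer the equation on $\bar{w}$ to the cylinder using the conformal equivariance property \eqref{confequi}. Applied with $g\leftarrow g_{cyl}$, $\bar{g}\leftarrow g_{\mathbb{R}^{n}}$, and the pair $(u,v)\leftarrow(\phi,\tilde{u}\phi)$ it gives $\mathcal{N}_{g_{\mathbb{R}^{n}}}(\phi)=\tilde{u}^{-\frac{2kn}{n-2k}}\mathcal{N}_{cyl}(\tilde{u}\phi)$ for every positive smooth $\phi$. Differentiating at $\phi=1+b_{0}$ in the direction $\bar{w}$ one obtains
\[
\mathbb{L}_{\mathbb{R}^{n}}(1+b_{0})[\bar{w}]\;=\;\tilde{u}^{-\frac{2kn}{n-2k}}\,\mathbb{L}_{cyl}(u)[\tilde{u}\bar{w}],
\]
so that, setting $w:=\tilde{u}\,\bar{w}$, the hypothesis on $\bar{w}$ reads $\mathbb{L}_{cyl}(u)[w]=0$ on the half-cylinder.

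I then convert the growth bound. Because $1+b_{0}$ is bounded above and below, $dist_{g}(q,p)$ is comparable to $|x|=e^{-s}$ near $p$, whence $|\bar{w}(s,\theta)|\leq C e^{\mu s}$. Multiplying by $\tilde{u}$ and by the decay factor $(\cosh s)^{-\frac{n(k-1)}{2k}}\sim e^{-\frac{n(k-1)}{2k}s}$, the exponents combine as
\[
-\tfrac{n-2k}{2k}+\mu-\tfrac{n(k-1)}{2k}\;=\;\mu-\tfrac{n-2}{2},
\]
and the assumption $0<\mu<n-2$ places the weight $\delta:=\mu-\tfrac{n-2}{2}$ strictly inside $\bigl(-\tfrac{n-2}{2},\tfrac{n-2}{2}\bigr)$, so Lemma \ref{removable} applies and upgrades this to $(\cosh s)^{-\frac{n(k-1)}{2k}}|w(s,\theta)|\leq C e^{-\frac{n-2}{2}s}$. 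Undoing the substitution, the same arithmetic shows the resulting exponent in $|\bar{w}|=\tilde{u}^{-1}|w|$ is exactly zero, i.e.\ $\bar{w}$ is \emph{bounded} on $B^{*}(p,1)$.

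The final step is standard elliptic theory. Since $g\in\Gamma_{k}^{+}$, the linearized operator $\mathbb{L}_{\mathbb{R}^{n}}(1+b_{0})$ is a uniformly elliptic linear second-order operator with smooth coefficients on $B(p,1)$, and $\bar{w}$ is a bounded distributional solution of $\mathbb{L}_{\mathbb{R}^{n}}(1+b_{0})[\bar{w}]=0$ on the punctured ball. As the singular set $\{p\}$ has zero capacity in dimension $n\geq 3$, a classical removable-singularities result yields that $\bar{w}$ extends to a smooth solution on all of $B(p,1)$. The main technical obstacle is the bookkeeping in the middle step: tracking how the conformal equivariance and the coordinate change interact so that the weight of $\bar{w}$ at $p$ lands precisely in the admissible range of Lemma \ref{removable}; once this is in place, the removability is essentially immediate.
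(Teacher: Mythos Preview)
Your proof is correct and follows essentially the same route as the paper's: pass to cylindrical coordinates, use conformal equivariance to rewrite the equation as $\mathbb{L}_{cyl}(u)[w]=0$ with $w=e^{-\frac{n-2k}{2k}s}\bar{w}$, translate the growth bound into the weighted form required by Lemma~\ref{removable} with $\delta=\mu-\tfrac{n-2}{2}$, and then invoke standard elliptic removability once $\bar{w}$ is shown to be bounded. The only small point you leave implicit (and which the paper states at the outset) is that $b_{0}=\mathcal{O}(e^{-2s})$, which follows from working in normal coordinates at $p$; this is the hypothesis needed to feed into Lemma~\ref{removable}.
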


\begin{proof} First of all we observe that, using normal coordinates centered at $p$, we have $b_{0}(q)=\bigo{|dist_{g}(q,p)|^{2}}$. Passing to cylindrical coordinates and using the conformal equivariance property \eqref{confequi} in order to recover the cylindrical background metric, we have that the equation satisfied by $\bar{w}$ becomes
$$
\mathbb{L}_{cyl}(e^{-\frac{n-2k}{2k}s}(1+b_{0}(s,\t)))[e^{-\frac{n-2k}{2k}s}\bar{w}(s,\t)]=0\quad\hbox{on}\quad(0,+\infty)\times\mathbb{S}^{n-1}.
$$
Letting $w(s,\t):=e^{-\frac{n-2k}{2k}s}\bar{w}(s,\t)$, we deduce from the decay assumption on $\bar{w}$ that
$$
|w(s,\t)|\leq C\,e^{\mu s}e^{-\frac{n-2k}{2k}s}.
$$
If we define $\delta:=\mu-\tfrac{n-2}{2}$, we have that $-\tfrac{n-2}{2}<\delta<\tfrac{n-2}{2}$ and 
$$
|w(s,\t)|\leq C\,e^{\delta s}e^{\frac{n(k-1)}{2k}s}.
$$
We are now in the position to apply Lemma \ref{removable} obtaining $|w(s,\t)|\leq C e^{-\frac{n-2k}{2k}s}$. From this we get
$$
|\bar{w}|\leq C\quad\hbox{on}\quad B^{*}(p,1).
$$
The standard elliptic theory is now sufficient to conclude that $\bar{w}$ can be extended through the point $p$ to a smooth solution on $B(p,1)$.
\end{proof}

\

\subsection{Uniform {\em a priori} estimates on $\Oue$ and $\Ode$}

To state the result, we have to introduce the functional setting. For $m\in\mathbb{N}$ and $\d\in\R$, we consider the weighted $C^{m}_{\d}$--norm defined by
$$
\Vert u\Vert_{C^{m}_{\d}(\Oue)}:=\Vert u \Vert_{C^{m}(M_{1}\setminus B(p_{1},1))}\,+\,\sum_{j=0}^{m}\,\sup_{(t,\t)\in N_{1,\e}}(\e\cosh t)^{\d+j}\,|\nabla^{j}_{g_{\e}}u|_{g_{\e}}(t,\t),
$$ 
where $N_{1,\e}:=(\log\e,0)\times\mathbb{S}^{n-1}$ and the first term is computed with respected to the metric $g_{1}$. Analogously, for $\b\in(0,1)$, we introduce the weighted H\"older $C^{m,\b}_{\d}$--seminorm    
\bea
[\,u\,]_{C^{m,\b}_{\d}(\Oue)}:=[\,u\,]_{C^{m,\b}(M_{1}\setminus B(p_{1},1))}\,+\,  \sup_{(t,\t) \in T_\e} \bigg\{  \, (\e \cosh t)^{\d + m}  \,    
\sup_{(t,\t)\neq(t',\t')} 
\frac{|\n^{m}_{g_{\e}}u(t,\t)-\n_{g_{\e}}^{m}u(t',\t')|_{g_{\e}}}{|dist_{g_{\e}}((t,\t),(t',\t'))|^{\b}}
\bigg\}
,
\eea
where, with the standard convention, the difference between the covariant derivatives is justified up to taking the parallel transport of one of them. The Banach space $C^{m,\b}_{\d}(\Oue)$ is defined by
$$
C^{m,\b}_{\d}(\Oue):=\left\{u\in C^{m,\b}_{loc}(\Oue):\,\,\Vert u\Vert_{C^{m,\b}_{\d}(\Oue)}:=\Vert u\Vert_{C^{m}_{\d}(\Oue)}+[u]_{C^{m,\b}_{\d}(\Oue)}<+\infty\right\}.
$$
We notice that the weighted Banach spaces $C^{m,\b}_{\d}(M_{\e})$, which will be used in the global analysis, can be defined in the same way, replacing $N_{1,\e}$ by $T_{\e}$ and $ M_1 \setminus B_{p_1, 1/2}$ by $\big(M_1 \setminus B_{p_1, 1/2}\big) \, \cup \, \big(M_2 \setminus B_{p_2, 1/2} \big)$. 
With these definitions at hand we are now ready to prove the uniform {\em a priori} estimate
for solutions to the linear problem \eqref{eq:local} on $\Oue$.
\begin{pro}\label{stime:local2} 
Suppose that $\d\in\left(-\f{n-2k}{2k},\f{n-2k}{2k}\right)$ and let $w\in C^{2,\b} (\Oue)$ and $f\in C^{\,0 , \b}(\Oue)$ be two functions satisfying 
\bea
\left\{
\begin{split}
\mathbb{L}_{\bg}(u_{\e})\,[w] &= f \hs\Oue\\
w &= 0 \hs\partial\Oue
\end{split}
\right.
\eea
Then there exist $C=C(n,k,\d,\b)>0$ and $\e_{0}=\e_{0}(n,k,\d)$ such that, for every $\e\in(0,\e_{0}]$, we have
\bea
 \Vert w\Vert_{C^{2,\b}_{\d}(\Oue)}\leq  C\,\Vert f\Vert_{C^{\,0, \b}_{ \d - \frac{n-2k}{2k} (2k-1) }(\Oue)}\,.
\eea
\end{pro}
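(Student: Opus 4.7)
My plan is a blow-up/contradiction argument paralleling the three-case split used in the proof of Lemma~\ref{SGUAN}. Suppose the desired estimate fails. Then I can extract sequences $\varepsilon_i\to 0$, $w_i\in C^{2,\beta}(\Omega_{1,\varepsilon_i})$ and $f_i\in C^{0,\beta}(\Omega_{1,\varepsilon_i})$ solving the Dirichlet problem with $\|w_i\|_{C^{2,\beta}_\delta(\Omega_{1,\varepsilon_i})}=1$ while $\|f_i\|_{C^{0,\beta}_{\delta-(n-2k)(2k-1)/(2k)}(\Omega_{1,\varepsilon_i})}\to 0$. Standard interior/boundary Schauder estimates combined with interpolation reduce the normalization to the weighted $L^\infty$ part, so I may select $q_i\in\overline{\Omega_{1,\varepsilon_i}}$ with $\varpi_i(q_i)^\delta|w_i(q_i)|\ge c>0$, where $\varpi_i\equiv 1$ on the bulk $M_1\setminus B(p_1,1)$ and $\varpi_i=\varepsilon_i\cosh t$ on the neck $N_{1,\varepsilon_i}$. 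Exactly three kinds of asymptotic behavior of $(q_i)$ then arise, in correspondence with the three cases of Lemma~\ref{SGUAN}.

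\emph{Case I} ($q_i\to q_\infty\in M_1\setminus\{p_1\}$). The smooth convergence $g_{\varepsilon_i}\to g_1$ on compact subsets of $M_1\setminus\{p_1\}$, the conformal equivariance \eqref{confequi} (used to conjugate the linearized operator back to the fixed background $g_1$), and elliptic compactness yield, after a multiplicative conjugation by $u_{\varepsilon_i}$, a limit $w_\infty\in C^2_{\mathrm{loc}}(M_1\setminus\{p_1\})$ with $w_\infty(q_\infty)\ne 0$ and $\mathbb{L}_{g_1}(1)[w_\infty]=0$. The weighted bound on $w_i$ translates into $|w_\infty(q)|\le C\,\mathrm{dist}_{g_1}(q,p_1)^{-(\delta+\lambda)}$ near $p_1$, with $\lambda=(n-2k)/(2k)$; the inequality $\delta+\lambda<n-2$ (which follows from $|\delta|<\lambda$ together with $2\le 2k<n$) places me within the hypotheses of Corollary~\ref{removable_cor} (valid under Assumption~\ref{ass}), so $w_\infty$ extends smoothly across $p_1$. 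The non-degeneracy of $(M_1,g_1)$ in the sense of Definition~\ref{nondegeneracy} then forces $w_\infty\equiv 0$, a contradiction.

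\emph{Cases II and III} concentrate in the neck and are handled by translation and rescaling, leveraging the $\sigma_k$-Schwarzschild model of Section~\ref{SCH}. In Case II, $t(q_i)\to t_\infty\in(-\infty,0]$; setting $\widetilde w_i:=\varepsilon_i^\delta w_i$ and invoking the expansion \eqref{geps} together with the conformal equivariance \eqref{confequi}, the sequence $\widetilde w_i$ converges on compacta of $(-\infty,0]\times\mathbb{S}^{n-1}$ to a nontrivial $\widetilde w_\infty$ satisfying $\mathbb{L}^{0}_{cyl}(v_\Sigma)[\widetilde w_\infty]=0$ with zero Dirichlet datum at $t=0$ and $|\widetilde w_\infty(t,\theta)|\le C(\cosh t)^{-\delta}$. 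The explicit expression \eqref{1st expression D} of the limit operator, Fourier decomposition on $\mathbb{S}^{n-1}$, and the indicial root analysis (the smallest indicial root in absolute value is exactly $(n-2k)/(2k)$, realized in the radial mode, while higher Fourier modes yield strictly larger values) combined with the strict bound $|\delta|<(n-2k)/(2k)$ and the Dirichlet condition force $\widetilde w_\infty\equiv 0$, contradicting the normalization. In Case III, $t(q_i)\to-\infty$ with $t(q_i)-\log\varepsilon_i\to+\infty$; translating $t\mapsto s+t_i$ and rescaling $\widetilde w_i(s,\theta):=(\varepsilon_i\cosh t(q_i))^\delta w_i(s+t_i,\theta)$, the ambient geometry converges on compacta of $\mathbb{R}\times\mathbb{S}^{n-1}$ to a flat Euclidean model, and a direct analysis of the rescaled limit operator --- or, equivalently, an application of Corollary~\ref{removable_cor} through the Euclidean correspondence $u(|x|)=|x|^{-(n-2k)/(2k)}v(-\log|x|)$ noted in the Remark following the definition of $g_\Sigma$ --- again yields $\widetilde w_\infty\equiv 0$.

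\emph{Main obstacle.} Case I is the crux, since this is where the hypotheses of the theorem (non-degeneracy and, through Assumption~\ref{ass}, conformal flatness of the background near the gluing locus) actually enter, via Corollary~\ref{removable_cor}. Cases II and III reduce to rigid model computations on the $\sigma_k$-Schwarzschild cylinder; the openness of the weight range $|\delta|<(n-2k)/(2k)$ is sharp, as its endpoints coincide with the smallest indicial roots of the radial mode of \eqref{1st expression D}, which is precisely what controls the uniform-in-$\varepsilon$ mapping properties of $\mathbb{L}_{\bar g}(u_\varepsilon)$.
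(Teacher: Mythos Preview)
Your proposal is essentially the paper's own proof: contradiction via blow-up, three-case split according to the location of the concentration points, with Case~I resolved by Corollary~\ref{removable_cor} plus the non-degeneracy hypothesis, and Cases~II--III by passage to the model operators on the half-cylinder and the full cylinder followed by separation of variables and the indicial-root inequality $\mu_j\ge (n-2k)/(2k)>|\delta|$.

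One small correction in Case~III: the alternative you propose, invoking Corollary~\ref{removable_cor} through the Euclidean correspondence, does not apply as stated. That corollary requires the background metric to satisfy $\sigma_k(g^{-1}A_g)=2^{-k}\binom{n}{k}$, whereas after your translation and rescaling the limit background is (conformally) Euclidean, for which $\sigma_k$ vanishes. The paper instead identifies the limit operator as $\mathbb{L}^0_{cyl}(1)=-C_{n,k}\big[\partial_t^2+\tfrac{n-2k+1}{n-1}\Delta_\theta-\tfrac{(n-2k)^2}{2k}\big]$ (note the coefficients differ from those of $\mathbb{L}^0_{cyl}(v_\Sigma)$, since $v\equiv 1$ is not $\sigma_k$-Schwarzschild) and rules out a bounded-growth solution on all of $\mathbb{R}\times\mathbb{S}^{n-1}$ by direct Fourier analysis, using that the indicial roots $\nu_j=\sqrt{\tfrac{n-2k+1}{n-1}\lambda_j+\tfrac{(n-2k)^2}{2k}}$ satisfy $\nu_j\ge (n-2k)/\sqrt{2k}>|\delta|$. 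Your ``direct analysis'' option is exactly this and is the correct route; just drop the claim of equivalence with the removable-singularities corollary.
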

\begin{proof} 
Here we just provide the uniform weighted $C^0$--bound
\be
 \label{C0bound} 
 \Vert w\Vert_{C^{0}_{\d}(\Oue)}\leq  C\,\Vert f\Vert_{C^{\,0}_{ \d - \frac{n-2k}{2k} (2k-1) }(\Oue)}\,,
\ee
since the uniform weighted $C^{2,\b}$--bound will follows easily from sandard scaling argument, see \cite{pacard}. 

\

To prove \eqref{C0bound} we argue by contradiction. Suppose that there exist a sequence $(\e_{i}, w_{i}, f_{i})$ such that
\begin{itemize}
\item $\e_{i} \, \longrightarrow  \, 0, \quad \mbox{ as } i\rightarrow + \infty,$
\item $\Vert w_{i}\Vert_{C^{0}_{\d}(\Ouei)} \equiv 1 ,\quad \,\, i \in \mathbb{N}$, 
\item $ \Vert f_{i}\Vert_{C^{\,0}_{\d  -  \frac{n-2k}{2k} (2k-1)}(\Ouei)} \longrightarrow 0 ,\quad \mbox{ as } i\rightarrow + \infty$
\end{itemize}
and 
\bea
\left\{
\begin{split}
\mathbb{L}_{\bg}(u_{\e_{i}})\,[w_{i}] &= f_{i} \hs\Ouei\\
w_{i} &= 0 \hs\partial\Ouei
\end{split}
\right.
\eea



To simplify the argument we introduce the function
$$
\zeta_{\e}(q):=\left\{
\begin{split}
\e\cosh(t(q)) &\hs B(p_{1},1)\setminus B(p_{1},\e)\\
1 &\hs M_{1}\setminus B(p_{1},1)
\end{split}
\right.
$$
where, according to Section \eqref{s:as}, $t(q):=\log\e-\log(dist_{g_{1}}(q,p_{1}))$.
With this notation we oberve that the weighted norms are equivalent to
$$
\Vert u\Vert_{C^{m,\b}_{\d}(\Oue)}:=\sum_{j=0}^{m}\,\sup_{\Oue}\zeta_{\e}^{\d+j}\,|\nabla^{j}_{g_{\e}}u|_{g_{\e}}\,+\,\sup_{p\neq q} \left|\min\{\zeta_{\e}(p),\zeta_{\e}(q)\}\right|^{\d+m}\frac{|\n^{m}_{g_{\e}}u(p)-\n_{g_{\e}}^{m}u(q)|_{g_{\e}}}{|dist_{g_{\e}}(p,q))|^{\b}}.
$$ 
Now we consider a sequence of points $q_{i}\in\Ouei$, $i \in \mathbb{N}$, where the maximum of the weighted norm of the functions $w_i$ is achieved, i.e., 
$$\zeta_{\e_{i}}^{\d}(q_{i}) \, |w_{i}(q_{i})|=1.$$ 
Depending on the behavior of the $q_i$'s, we have to distinguish three possible cases.

\medskip

{\bf Case 1:} There exists a subsequence of $q_i$'s such that $$q_{i}\longrightarrow q_{\infty}\in M_{1}\setminus\{p_{1}\}.$$

From the conformal equivariance property we have that
$$
f_i = \mathbb{L}_{\bg}(u_{\e_i}) \, [w_i] = u_{\e_i}^{-\frac{2kn}{n-2k}} \mathbb{L}_{g_{\e_i}} (1) \, [u_{\e_i}^{-1} w_i] .
$$
From the fact that the approximate solution metrics $g_{\e_i}$ converge to $g_1$ on the compact subsets of $M_1 \setminus \{ p_1\}$ with respect to the $C^r$--topology, $r \in \mathbb{N}$, and from the standard elliptic regularity theory, we get that the functions $w_{i}$ converge in the $C^{2}$--norm (computed with respect to the metric $g_{1}$) on each compact subsets of $M_{1}\setminus\{p_{1}\}$ to a function $w_{\infty}$ which satisfies
\be\label{eq:limit1}
\mathbb{L}_{g_{1}}(1)[u_{1}^{-1}w_{\infty}]=0\quad\hbox{on}\,\,\,M_{1}\setminus\{p_{1}\},
\ee 
in the sense of distributions.
Moreover, in the limit point, we have $|w_{\infty}(q_{\infty})|>0$, which means that $w_{\infty}$ is nontrivial. Using that $\Vert w_{i}\Vert_{C^{0}_{\d}(\Omega_{1,\e_{i}})}=1$ and passing to the limit on the compact subsets we obtain the estimate 
$$
|w_{\infty}(q)|\leq C |dist_{g_{1}}(p_{1},q)|^{-\d},\hspace{0.5cm}\,q\in B^{*}(p_{1},1),
$$ 
with $\d\in\left(-\tfrac{n-2k}{2k},\tfrac{n-2k}{2k}\right)$ and $C>0$ a positive constant. This is due to the fact that the weighting functions $\zeta_{\e_i}$ are uniformly comparable to the Riemannian $g_1$--distance to $p_1$. We are going to prove that, in force of this latter feature, the function $u_1^{-1} w_{\infty}$ can be extended to a nontrivial solution of (\ref{eq:limit1}) on the whole $M_{1}$. Using the conformal equivariance property \eqref{confequi} and the fact that, thanks to the assumption \ref{ass}, we can always write on $B(p_1,1)$ 
$$
g_{1}=(1+b_{1})^{\frac{4k}{n-2k}}g_{\R^{n}},
$$ 
with $b_{1}(q)=\bigo{|dist_{g_{1}}(q,p_{1})|^{2}}$, we have that equation above implies
$$
\mathbb{L}_{\R^{n}}(1+b_{1})[(1+b_{1})u_{1}^{-1}w_{\infty}]=0\quad\hbox{on}\quad B^{*}(p_{1},1).
$$ 
Recalling that $g_{1}$ solves the $\sigma_{k}$--Yamabe equation, and that 
$$
|(1+b_{1})u_{1}^{-1}w_{\infty}|(q)\leq C|dist_{g_{1}}(q,p_{1})|^{-\frac{n-2k}{2k}-\delta}
$$
we can apply Corollary \ref{removable_cor} to obtain that $u_1^{-1}w_{\infty}$ extends through $p_{1}$ to a nontrivial smooth solution of 
$$
\mathbb{L}_{g_{1}}(1)[u_{1}^{-1}w_{\infty}]=0\quad\hbox{on}\quad M_{1}.
$$
But this contradicts the {\em non degeneracy} of the metric $g_{1}$ on $M_{1}$ according to Definition \ref{nondegeneracy}.

\medskip

{\bf Case 2:} There exists a subsequence of $q_i$'s such that $$q_{i}\longrightarrow q_{\infty}=p_{1},\quad \a_{i}/\e_{i}=\bigo{1}\quad\hbox{as}\quad i\rightarrow+\infty$$
where $\a_{i}:=\zeta_{\e_{i}}(q_{i})\simeq dist_{g_{1}}(q_{i},p_{1})$. Notice that
$\a_{i}/\e_{i}\simeq\cosh(t_{i})$, where $t_{i}:=t(q_{i})$. Since $q_{i}\rightarrow p_{1}$, the sequence of points $q_{i}$ will stay definitely in the annulus $A(p_{1},1,\e_{i})$, which is mapped to $(\log \e_i , 0 ) \times \mathbb{S}^{n-1}$ via the asymptotic cylindrical coordinates $(t,\t)$. For this reason, with abuse of notation, we can say that $w_{i}(q)=w_{i}(t(q),\t(q))$. Hence, we have $|w_{i}(q_{i})|=|w_{i}(t_{i},\t_{i})|=\a_{i}^{-\d}$. So, if we define 
$$
\bw_{i}:=\a_{i}^{\d}w_{i},\,\,\,\, 
$$
we have $|\bw_{i}(t_{i},\t_{i})|=1$, $|\bw_{i}(t,\t)|\leq (\cosh t)^{-\d}$, for all $(t,\t)\in(-\log\e_{i},0]\times\mathbb{S}^{n-1}$. 
For all $C>0$, we observe that the sequence of points $(t_{i},\t_{i})$ will stay definitely in a compact set of the type $[-C,-1]\times\mathbb{S}^{n-1}$ and, up to a subsequence, they converge to a limit point $(t_{\infty},\t_{\infty})$.

\

In order to investigate the limit problem we introduce an auxiliary function $b$ defined on $T_{\e_i} = (\log {\e_i}, -\log \e_i) \times \mathbb{S}^{n-1}$ in such a way that the following identity is satisfied
$$
\bg = (1+b)^{\frac{4k}{n-2k}} g_{cyl} .
$$
It is immediate to verify that $b={\bigo{\e_i^2}}$ on the compact subset of $T_{\e_i}$. From the conformal equivariance property \eqref{confequi} applied to the problem
\bea
\left\{
\begin{split}
\mathbb{L}_{\bg}(u_{\e_i})\,[w_i] &= f_i \hs  (\log \e_i , 0 ) \times \mathbb{S}^{n-1} \\
w &= 0 \hs \,\, \{0\} \times \mathbb{S}^{n-1}
\end{split}
\right.
\eea
we get
$$
\big [ \, \mathbb{L}^0_{cyl} \big( v_\Sig (1+b)\big) \, - \, \e_i^{{2k}} \hbox{${n \choose k}$} \big( \tfrac{n-2k}{4k}\big)^k \tfrac{2kn}{n-2k} \big( v_{\Sig} (1+b) \big)^{\frac{2kn}{n-2k}-1}  \big] \, [ (1+b) \, \bw_i] \, = \, (1+b)^{\frac{2kn}{n-2k}} \e_i^{- \frac{n-2k}{2k} (2k-1)}  \a_i^{\d} f_i
$$
with $\bw_i (0,\theta) = 0 $, for every $i \in \mathbb{N}$ and $\theta \in \mathbb{S}^{n-1}$.
Since by hypothesis we have supposed that 
$$
\nor{ \,f_i}{\mathcal{C}^{\,\, 0}_{\d - \frac{n-2k}{2k} (2k-1) } (\Ouei)} \longrightarrow 0
$$ 
and $\a_i / \e_i = \bigo{1}$, as $i \rightarrow +\infty$, we deduce that the right hand side of the expression above tends to zero in ${C}^0_{loc} \big( (-\infty , 0) \times \mathbb{S}^{n-1}\big)$.
Moreover it is easy to see that the coefficients of the linear operator on the left hand side tends to the ones of 
$$
\mathbb{L}^0_{cyl}(v_\Sig) = -C_{n,k} \, v_\Sig \, \big[ \, \partial^{2}_{t}+\tfrac{n-k}{k(n-1)}\D_{\t}-\big(\tfrac{n-2k}{2k}\big)^{2}\big]
$$
in ${C}^0_{loc} \big( (-\infty , 0) \times \mathbb{S}^{n-1}\big)$.
By elliptic regularity we obtain the convergence of $\bw_{i}$ to a function $\bw_{\infty}$ in ${C}^{2}_{loc}\big( (-\infty,0)\times\mathbb{S}^{n-1} \big)$, which satisfies in the sense of distributions
\be\label{eq:limit2}
\left\{
\begin{split}
\big[ \, \partial^{2}_{t}+\tfrac{n-k}{k(n-1)}\D_{\t}-\big(\tfrac{n-2k}{2k}\big)^{2}\big]\,\bw_{\infty} &= 0 \hs(-\infty,0)\times\mathbb{S}^{n-1}\\
\bw_{\infty} &= 0 \hs\{0\}\times\mathbb{S}^{n-1}
\end{split}
\right.
\ee
Moreover $\bw_{\infty}$ is nontrivial since, in the limit point, $|\bw_{\infty}(t_{\infty},\t_{\infty})|>0$, and clearly verifies the inequality $|\bw_{\infty}(t,\t)|\leq(\cosh t)^{-\d}$. 
Expanding $\bw_{\infty}$ as $$\bw_{\infty}(t,\t)=\sum_{j=0}^{+\infty}\bw_{\infty}^{j}(t)\,\phi_{j}(\t),$$ where $\phi_{j}$ are the eigenfunctions of $\D_{\t}$ satisfying $-\D_{\t}\,\phi_{j}=\lambda_{j}\phi_{j}$, $j\in\mathbb{N}$, we obtain from \eqref{eq:limit2} that the components $\bw^{j}_{\infty}$ are of the form
$$
\bw^j_{\infty} (t) = A e^{-\mu_j t} +  B e^{\mu_j t} ,
$$
where $A,B \in \R$ and
$$
\mu_{j}:=\left[\tfrac{n-k}{k(n-1)}\lambda_{j}+\left(\tfrac{n-2k}{2k}\right)^{2}\right]^{1/2}.
$$ 
Since $\d\in\left(-\tfrac{n-2k}{2k},\tfrac{n-2k}{2k}\right)$ and $\mu_j \geq \tfrac{n-2k}{2k} > |\d|$, we have that $A$ must be zero. On the other hand the boundary condition implies that $B$ must be zero as well. Hence, $\bw_\infty \equiv 0$, which contradicts the nontriviality.


\medskip

{\bf Case 3:} There exists a subsequence such that $$q_{i}\longrightarrow q_{\infty}=p_{1},\quad \a_{i}/\e_{i}\rightarrow+\infty\quad\hbox{as}\quad i\rightarrow+\infty,$$
where $\a_{i}:=\zeta_{\e_{i}}(q_{i})\simeq dist_{g_{1}}(q_{i},p_{1})$ as before. Again, since $q_{i}\rightarrow p_{1}$, the sequence of points $q_{i}$ will stay definitely in the annulus $A(p_{1},1,\e_{i})$, which is mapped to $N_{1,\e_{i}} = (\log \e_i , 0 ) \times \mathbb{S}^{n-1}$ via the asymptotic cylindrical coordinates $(t,\t)$. With the same abuse of notations as in case 2, we have $|w_{i}(q_{i})|=|w_{i}(t_{i},\t_{i})|=\a_{i}^{-\d}$. To keep track of the nontriviality of the functions $w_{i}$ in the limit, it is convenient to set 
$$
\hat{w}_{i}(t,\t):=\a_{i}^{\d}w_{i}(t+t_{i},\t) .
$$ 
Clearly, we have $|\hat{w}_{i}(0,\t_{i})|=1$, $|\hat{w}_{i}(t,\t)|\leq 2 \, (\cosh t)^{-\d}$, for all $(t,\t)\in(\log\e_{i}-t_{i},-t_{i})\times\mathbb{S}^{n-1}$.
To study the limit problem, we first observe that in this case, due to our definitions, we have $t_{i}\rightarrow -\infty$ and $\log\e_{i}-t_{i}\rightarrow -\infty$ as $i\rightarrow+\infty$. Hence, in the limit, the domain becomes $\R	\times\mathbb{S}^{n-1}$. We define 
$$
\hat{u}_{\e_{i}}(t):=u_{\e_{i}}(t+t_{i})\quad
\hbox{and} \quad \hat{g}(t,\t):=[1+\hat b(t,\t)]g_{cyl},
$$ 
where $\hat b (t, \t) := b(t+t_i , \t)$ and we recall that $u_{\e_i} (t+t_i ) = \e^{(n-2k)/2k}\cosh \big( \tfrac{n-2k}{2k}(t+t_i)\big)$ in this region. In particular we have that 
\bea
\left\{
\begin{split}
\mathbb{L}_{\hat{g}}(\hat u_{\e_i})\,[\hat w_i] &=  \a_i^{\d} f_i (\, \cdot \, +t_i , \cdot	\cdot \, )\hs  (\log \e_i -t_i , -t_i) \times \mathbb{S}^{n-1}\\
\hat w_i &= 0 \hs\hs  \,\, \hs\{-t_i \} \times \mathbb{S}^{n-1}
\end{split}
\right.
\eea
Setting $\hat v_i (t) : = (\e_i / \a_i)^{(n-2k)/2k} \cosh \big(  \tfrac{n-2k}{2k} (t+t_i)\big) $  and using the conformal equivariance property \eqref{confequi} we get 
$$
\big [ \, \mathbb{L}^0_{cyl} \big( \hat v_i (1+ \hat b)\big) \, - \, \a_i^{{2k}} \hbox{${n \choose k}$} \big( \tfrac{n-2k}{4k}\big)^k \tfrac{2kn}{n-2k} \big( \hat v_{i} (1+ \hat b) \big)^{\frac{2kn}{n-2k}-1}  \big] \, [ (1+ \hat b) \, \hat w_i] \, = \, (1+ \hat b)^{\frac{2kn}{n-2k}} \a_i^{- \frac{n-2k}{2k} (2k-1) + \d} f_i (\, \cdot \,+ t_i , \cdot \cdot \,)
$$
with $\hat w_i (-t_i,\theta) = 0 $, for every $i \in \mathbb{N}$ and $\theta \in \mathbb{S}^{n-1}$. Since we have that the functions $\hat w_i$ are uniformly far from zero at $t=0$, we are interested in the limit behavior of the coefficients of our problem on the compact subset of $\R\times \mathbb{S}^{n-1}$ of the form $[-C, C] \times \mathbb{S}^{n-1}$. In this type of region it is immediate to verify that $\hat b = \mathcal{O}({\a_i^2})$ and $\hat v_i$ are uniformly converging to 1. Since by hypothesis we have supposed that 
$$
\nor{ \,f_i}{\mathcal{C}^{\,\, 0}_{\d - \frac{n-2k}{2k} (2k-1) } (\Ouei)} \longrightarrow 0,
$$  
we deduce that the right hand side of the expression above tends to zero in ${C}^0_{loc} \big(  \R\times \mathbb{S}^{n-1}\big)$. Again by elliptic regularity we have the convergence of $\hat{w}_{i}$ to a function $\hat{w}_{\infty}$ in ${C}^{2}_{loc}(\R\times\mathbb{S}^{n-1}) $, which satisfies in the sense of distributions
\be\label{eq:limit3}
\mathbb{L}^0_{cyl} (1) [\hat w_\infty] \,\, = \,\,-C_{n,k} \, \big[ \, \partial^{2}_{t}+\tfrac{n-2k +1 }{(n-1)}\D_{\t}- \tfrac{(n-2k)^2}{2k} \big]\,\hat w_{\infty}  \,\, =  \,\, 0 \hs \R \times\mathbb{S}^{n-1}
\ee
Moreover, up to a subsequence, we have that $\t_i \rightarrow \t_\infty \in \mathbb{S}^{n-1}$ and  $ |\hat w_{\infty} (0, \t_\infty)| = 1 $, hence $\hat w_\infty$ is nontrivial and clearly verifies the inequality $|\hat w_{\infty}(t,\t)| \leq  2 (\cosh t)^{-\d}$. Using the separation of variables as in case 2, we have for $\hat w_{\infty}$ the following expansion
$$
\hat w_{\infty}(t,\t)=\sum_{j=0}^{+\infty}\hat w_{\infty}^{j}(t)\,\phi_{j}(\t).
$$ 
Hence, we infer from \eqref{eq:limit3} that the components $\hat w^{j}_{\infty}$ are of the form
$$
\hat w^j_{\infty} (t) = A e^{-\nu_j t} +  B e^{\nu_j t} ,
$$
where $A,B \in \R$ and
$$
\nu_{j}:=\left[\tfrac{n-2k+1}{(n-1)}\lambda_{j}+\tfrac{(n-2k)^2}{2k}\right]^{1/2}.
$$ 
Since $\d\in\left(-\tfrac{n-2k}{2k},\tfrac{n-2k}{2k}\right)$ and $\nu_j \geq \tfrac{n-2k}{\sqrt{2k}}  > |\d|$, we have that both $A$ and $B$ must be zero. Hence, $\hat w_\infty \equiv 0$, which contradicts the nontriviality.
\end{proof}


We point out that thanks to the Fredholm alternative, see \cite{gt}, the previous proposition also provides existence and uniqueness of solutions to problem \eqref{eq:local}, for sufficiently small values of the parameter $\e$.

\

As an easy consequence of Proposition \ref{stime:local2} we get the following 
\begin{cor}
\label{stime:local2h}
Suppose that $\d\in\left(-\f{n-2k}{2k},\f{n-2k}{2k}\right)$ and let $\bw\in C^{2,\b} (\Oue)$ and $\psi\in C^{\,2 , \b}(\partial \Oue)$ be two functions satisfying 
\bea
\left\{
\begin{split}
\mathbb{L}_{\bg}(u_{\e})\,[\bw] &= 0 \hs\Oue\\
w &= \psi \hs\partial\Oue
\end{split}
\right.
\eea
Then there exist $C=C(n,k,\d,\b)>0$ and $\e_{0}=\e_{0}(n,k,\d)$ such that, for every $\e\in(0,\e_{0}]$, we have
\bea
 \Vert \bw \Vert_{C^{2,\b}_{\d}(\Oue)}\leq  C\, \e^\d\Vert \psi\Vert_{C^{\,2, \b}(\partial \Oue)}
 \,.
\eea
\end{cor}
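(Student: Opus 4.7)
The strategy is to reduce the corollary to Proposition \ref{stime:local2} by producing an explicit extension of the boundary datum $\psi$ into $\Oue$. Concretely, I would construct a function $W \in C^{2,\beta}(\Oue)$ satisfying $W|_{\partial \Oue} = \psi$, enjoying the \emph{a priori} weighted bound $\|W\|_{C^{2,\beta}_\delta(\Oue)} \leq C\, \e^\delta \|\psi\|_{C^{2,\beta}(\partial \Oue)}$, and such that the error term $\mathbb{L}_{\bar g}(u_\e)[W]$ admits a corresponding bound in $C^{0,\beta}_{\delta - \frac{n-2k}{2k}(2k-1)}(\Oue)$. Writing $\bar w = w' + W$, the difference $w'$ solves the zero-boundary Dirichlet problem with right hand side $-\mathbb{L}_{\bar g}(u_\e)[W]$, so Proposition \ref{stime:local2} controls $w'$, and the triangle inequality then yields the desired estimate for $\bar w$.

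To construct $W$ I would work in the asymptotic cylindrical chart near the boundary, in which $\partial \Oue$ corresponds to $\{t=0\}\times\mathbb{S}^{n-1}$. Expanding $\psi(\theta)=\sum_j \psi_j \phi_j(\theta)$ in the basis of eigenfunctions of $\Delta_{\mathbb{S}^{n-1}}$ and guided by the explicit form of the model operator $\mathbb{L}^{0}_{cyl}(v_\Sig)$ given in \eqref{1st expression D}, a natural candidate inside the neck region is
\be
W(t,\theta) \,=\, \chi(t)\, u_\e(t)^{-1}\, v_\Sig(t)\,\sum_{j\geq 0}\psi_j\, e^{\mu_j t}\,\phi_j(\theta),\qquad \mu_j := \Big[\tfrac{n-k}{k(n-1)}\lambda_j+\big(\tfrac{n-2k}{2k}\big)^{2}\Big]^{1/2},
\ee
where $\chi$ is a smooth cutoff equal to $1$ near $\{t=0\}$ and vanishing for $t\leq -1$, and $W$ is extended by zero to $\Oue\setminus T_\e$. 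Since the spectral gap $\mu_j\geq \tfrac{n-2k}{2k}>|\delta|$ is precisely the one already exploited in Case 2 of the proof of Proposition \ref{stime:local2}, each mode decays faster than the weight $\zeta_\e^\delta\sim \e^\delta e^{-\delta t}$ can grow as $t\to \log\e$. Consequently, the supremum defining the weighted norm of $W$ is attained near $t=0$, where $\zeta_\e=\e$, producing the factor $\e^\delta$.

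The error $\mathbb{L}_{\bar g}(u_\e)[W]$ has two sources. The first comes from the $\bigo{\e^{2}\cosh(2t)}$ deviation of $\bar g$ from the (scaled) $\sigma_k$--Schwarzschild model recorded in \eqref{geps} and the accompanying expansion of Section \ref{s:as}; this contribution is small because it multiplies the already decaying extension and carries an additional $\e^{2}$ factor. The second contribution comes from the commutator of $\mathbb{L}_{\bar g}(u_\e)$ with $\chi$, which is supported in a fixed compact annulus where $\mu_0=\tfrac{n-2k}{2k}$ provides exponential smallness of the extension, again yielding the required $\e^\delta$ bound. Both contributions land in $C^{0,\beta}_{\delta-\tfrac{n-2k}{2k}(2k-1)}(\Oue)$ with norm bounded by $C\e^\delta \|\psi\|_{C^{2,\beta}(\partial\Oue)}$.

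The main technical obstacle is ensuring that the series defining $W$ and the resulting error $\mathbb{L}_{\bar g}(u_\e)[W]$ converge in $C^{2,\beta}$ with constants independent of $\e$. This reduces to uniform elliptic estimates on $\mathbb{S}^{n-1}$ for sums of $e^{\mu_j t}\phi_j(\theta)$, handled by the $C^{2,\beta}$-regularity assumed on $\psi$, together with the spectral gap $\mu_j\geq \tfrac{n-2k}{2k}$ that guarantees the exponential decay of every mode in the interior direction. Once these bounds are in place, the remaining verifications are routine consequences of the computations already assembled for Proposition \ref{stime:local2} and of the analysis of $\mathbb{L}^{0}_{cyl}(v_\Sig)$ carried out in Section \ref{SCH}.
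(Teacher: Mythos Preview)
Your overall strategy is correct and matches the paper's: extend $\psi$ into $\Oue$, subtract the extension from $\bar w$, and apply Proposition~\ref{stime:local2} to the difference. However, the paper's extension is dramatically simpler than yours: it takes $\widetilde{\psi}(t,\theta) := \chi(t)\,\psi(\theta)$ with $\chi$ a cutoff supported in $[-1,0]$, and that is all. Because $\widetilde{\psi}$ is supported where $\zeta_\e = \e\cosh t \sim \e$, one has immediately $\|\widetilde{\psi}\|_{C^{2,\beta}_\delta(\Oue)} \leq 2\,\e^\delta\|\psi\|_{C^{2,\beta}(\partial\Oue)}$, and the error $\mathbb{L}_{\bar g}(u_\e)[\widetilde{\psi}]$ is likewise supported near $t=0$, where the coefficients of $\mathbb{L}_{\bar g}(u_\e)$ are of size $u_\e^{2k-1}\sim \e^{\frac{n-2k}{2k}(2k-1)}$; this gives the required bound on $\|f\|_{C^{0,\beta}_{\delta-\frac{n-2k}{2k}(2k-1)}}$ directly, with no spectral analysis.

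Your Poisson--type extension is a reasonable instinct (and indeed this kind of mode-by-mode analysis reappears in the paper's treatment of the Dirichlet--to--Neumann map in Proposition~\ref{D to N 1}), but here it is unnecessary and brings avoidable technicalities. Two small glitches are also worth flagging. First, your formula for $W$ does not satisfy $W|_{t=0}=\psi$: since $u_\e = \e^{(n-2k)/2k}v_\Sigma$ on the neck, the factor $u_\e^{-1}v_\Sigma$ equals $\e^{-(n-2k)/2k}$, so $W(0,\theta)=\e^{-(n-2k)/2k}\psi(\theta)$. Second, you specify that $\chi$ vanishes for $t\leq -1$, so your extension is already supported in $[-1,0]\times\mathbb{S}^{n-1}$; the argument about modes ``decaying faster than the weight as $t\to\log\e$'' is then vacuous, since nothing reaches that region. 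Once you drop the eigenfunction machinery and the stray $u_\e^{-1}v_\Sigma$ factor, you recover exactly the paper's one-line extension.
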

\begin{proof} It is sufficient to observe that it is always possible to define the extension of $\psi$ as $\widetilde{\psi} (t,\t) := \chi(t) \psi(\t)$, where $\chi$ is a smooth nondecreasing cut-off supported in $[-1,0]$ with $\chi(0) = 1$. Now we just apply the previous proposition to the function $v:=\bw-\widetilde{\psi}$. The desired estimate follows from the fact that $\nor{\widetilde \psi}{C^{2,\b}_\d (\Oue)} \leq 2 \e^\d \nor{\psi}{C^{2,\b}(\partial \Oue)}$ by construction.
\end{proof}

\

\subsection{Dirichlet to Neumann map}\label{DN}

We introduce now the Dirichlet
to Neumann map for the operator $\mathbb{L}_{\bg}(u_{\e})$ on $\Oue$. For any Dirichlet data $\psi \in
\mathcal{C}^{\, 2, \beta} (\partial \Oue)$, we consider the problem
\be
\label{eq:localh}
\left\{
\begin{split}
\mathbb{L}_{\bg}(u_{\e})\,\bw &= 0 \hs\Oue\\
\bw &= \psi \hs\partial\Oue
\end{split}
\right.
\ee
Thanks to Corollary \ref{stime:local2h} for $\e$ sufficiently small, we have (uniform) {\em a priori} estimates, existence and uniqueness of a solution $\bw_{1}(\psi)$ to this problem. In force of these considerations, we define the Dirichlet to Neumann map for the problem (\ref{eq:localh}) as
\bea
T_{\e}:\,C^{2,\b}(\mathbb{S}^{n-1})\longrightarrow C^{1,\b}(\mathbb{S}^{n-1}),\hs T_{\e}:\, \psi \longmapsto \dt{\bw_{1}(\psi)}_{\left|_{\partial\Oue}\right.}=:\partial_{\nu}{\bw_{1}(\psi)}_{\left|_{\partial\Oue}\right.},
\eea
where $\partial_{\nu}$ will denote the outward normal derivative to $\Oue$. It follows from the considerations above that this is a well defined linear operator, which is uniformly bounded in $\e$, for $\e$ sufficiently small. The definition can be obviously extended to an operator (denoted in the same manner) acting between $H^{1}(\mathbb{S}^{n-1})$ and $L^{2}(\mathbb{S}^{n-1})$. In this context we will show the following
\begin{pro} 
\label{D to N 1}
As $\e\rightarrow 0$, the operators $T_{\e}$ converge in norm to a limit operator $T_{0}$ acting between $H^{1}(\mathbb{S}^{n-1})$ and $L^{2}(\mathbb{S}^{n-1})$. Moreover the operator $T_{0}$ is determined by its values on the eigenfunctions $\phi_{j}$ of the Laplacian on $\mathbb{S}^{n-1}$, namely
\bea
T_{0}\,\phi_{j}&=&\sqrt{\tfrac{n-k}{k(n-1)}\lambda_{j}+\left(\tfrac{n-2k}{2k}\right)^{2}}\phi_{j}\hspace{0.5cm}  j\in\mathbb{N}
\eea
\end{pro}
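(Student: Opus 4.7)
The plan is to identify $T_0$ by separation of variables on the limit half-cylinder, establish pointwise convergence $T_\e\psi\to T_0\psi$ via a blow-up argument parallel to Case 2 in the proof of Proposition \ref{stime:local2}, and finally upgrade to operator-norm convergence through a compact-perturbation argument.

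For the identification of $T_0$: on the half-cylinder $(-\infty,0]\times\Sp$, the operator $\mathbb{L}^{0}_{cyl}(v_\Sig)$ from Section \ref{SCH} has the form $-C_{n,k}\,v_\Sig\big[\p_t^2+\tfrac{n-k}{k(n-1)}\Delta_\theta-\big(\tfrac{n-2k}{2k}\big)^2\big]$. Expanding $\psi=\sum a_j\phi_j$ in spherical harmonics with $-\Delta_\theta\phi_j=\lambda_j\phi_j$, the unique solution $\bw_\infty$ with $\bw_\infty|_{t=0}=\psi$ and decay at $t\to-\infty$ is $\bw_\infty(t,\t)=\sum a_j e^{\mu_j t}\phi_j(\t)$ with $\mu_j=\sqrt{\tfrac{n-k}{k(n-1)}\lambda_j+\big(\tfrac{n-2k}{2k}\big)^2}$. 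In the chosen cylindrical coordinates, the outward normal $\p_\nu$ to $\Oue$ at $\{t=0\}$ coincides with $+\p_t$ (moving toward $p_1$ decreases $|x|$, hence increases $t$), so $T_0\phi_j=\mu_j\phi_j$ as claimed.

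For pointwise convergence: fix $\psi\in C^{2,\b}(\Sp)$ and let $\bw_\e:=\bw_1(\psi)$. Corollary \ref{stime:local2h} with any weight $\d\in(0,\tfrac{n-2k}{2k})$ yields the uniform bound $|\bw_\e(t,\t)|\le C(\cosh t)^{-\d}$ on the neck $(\log\e,0]\times\Sp$. Using the conformal equivariance \eqref{confequi} together with the explicit expansions of $u_\e$ and of the perturbation $c$ derived in Section \ref{s:as}, after conformally conjugating to the cylindrical background the coefficients of $\mathbb{L}_{\bg}(u_\e)$ converge in $C^0_{loc}((-\infty,0]\times\Sp)$ to those of $\mathbb{L}^{0}_{cyl}(v_\Sig)$, exactly as in Case 2 of the proof of Proposition \ref{stime:local2}. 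Boundary Schauder estimates then give $C^{2,\b}_{loc}$-convergence up to $\{t=0\}$ of $\bw_\e$ to a solution $\bw_\infty$ of the limit problem, which by the decay bound and the inequality $\d<\mu_j$ for every $j$ must equal $\sum a_j e^{\mu_j t}\phi_j$. Consequently $\p_\nu\bw_\e|_{t=0}\to T_0\psi$ in $C^{1,\b}(\Sp)$, a fortiori in $L^2(\Sp)$.

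The final step---norm convergence as operators $H^1(\Sp)\to L^2(\Sp)$---is the main obstacle and requires more than pointwise convergence combined with uniform boundedness, since both $T_\e$ and $T_0$ are principally order-one operators. The strategy is to exhibit $T_\e-T_0$ as an operator of strictly lower order (hence compact from $H^1$ to $L^2$) by exploiting that in the neck $\mathbb{L}_{\bg}(u_\e)-\mathbb{L}^{0}_{cyl}(v_\Sig)$ is a perturbation of size $\bigo{\e^2}$ in $C^0$ (by Section \ref{s:as}), and tracking how this translates into a one-derivative gain for the difference of the DtN maps. Combined with a uniform bound $\sup_\e\|T_\e\|_{H^1\to L^2}<+\infty$ (obtained by re-running the a priori estimate of Corollary \ref{stime:local2h} in $L^2$-based Sobolev norms via duality) and the strong convergence from the previous step, this would yield the desired norm convergence. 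Making this compact-perturbation argument rigorous in the present non-smooth, non-translation-invariant setting is where the main technical work lies.
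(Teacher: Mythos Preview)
Your identification of $T_0$ and the pointwise convergence argument are sound, but the acknowledged gap in the norm-convergence step is real, and the compact-perturbation route you sketch is not how the paper closes it. Pointwise (strong) convergence together with uniform boundedness does not yield norm convergence for unbounded families of first-order pseudodifferential operators, and showing that $T_\e-T_0$ is genuinely of order $<1$ would require a pseudodifferential analysis that is not available here (the background is $\e$-dependent and only $C^2$-close to the model).

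The paper bypasses this entirely by a \emph{quantitative} argument that produces a rate uniform in the Fourier index $j$. Fixing $\psi=\phi_j$, one writes the equation for $\bw$ on $(\gamma\log\e,0)\times\Sp$ (any $\gamma\in(0,1)$), where conformal equivariance gives
\[
-C_{n,k}\,v_\Sig\Big[\partial_t^2+\tfrac{n-k}{k(n-1)}\Delta_\theta-\big(\tfrac{n-2k}{2k}\big)^2+\e^{2(1-\gamma)}\mathcal{P}\Big]\bw=0,
\]
with $\mathcal{P}$ second order with bounded coefficients. Projecting on $\phi_j$ yields an ODE for $\bw^j$; multiplying by $\chi(t)e^{\mu_j t}$ (with $\chi$ a cutoff supported in $[\gamma\log\e,0]$, $\chi(0)=1$) and integrating by parts gives an exact identity for $\partial_t\bw^j(0)-\mu_j$ whose right-hand side is bounded, using only the weighted bound $|\bw^j(t)|\le Ce^{-\delta t}$ from Corollary \ref{stime:local2h}, by
\[
|\partial_t\bw^j(0)-\mu_j|\;\le\;B\,\e^{\nu},\qquad \nu=\min\{\,2(1-\gamma),\ \gamma(\mu_0-\delta)\,\}>0,
\]
with $B=B(n,k,\delta,\gamma)$ \emph{independent of $j$}. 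This $j$-uniformity is the whole point: for general $\psi=\sum_j\psi^j\phi_j$ one then gets by Parseval
\[
\|(T_\e-T_0)\psi\|_{L^2(\Sp)}^2\;=\;\sum_j|\psi^j|^2\,|\partial_t\bw^j(0)-\mu_j|^2\;\le\;B^2\e^{2\nu}\,\|\psi\|_{H^1(\Sp)}^2,
\]
which is precisely operator-norm convergence with an explicit rate. The moral is that a direct energy/integration-by-parts estimate against the explicit model solution $e^{\mu_j t}$, rather than a soft blow-up limit, is what delivers the required uniformity in the spherical frequency.
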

\begin{proof} Let $\bw$ be the solution to the homogeneous problem (\ref{eq:localh}) with boundary datum $\psi = \phi_j$. Using the conformal equivariance property \eqref{confequi} on $(\g \log \e , 0 ) \times \mathbb{S}^{n-1}$, with $\g \in(0,1)$, we obtain the equation
$$
\big [ \, \mathbb{L}^0_{cyl} \big( v_\Sig (1+b)\big) \, - \, \e^{{2k}} \hbox{${n \choose k}$} \big( \tfrac{n-2k}{4k}\big)^k \tfrac{2kn}{n-2k} \big( v_{\Sig} (1+b) \big)^{\frac{2kn}{n-2k}-1}  \big] \, [ (1+b) \, \bw] \, =  \, 0.
$$
Since $b$ can be estimated in this region as $b = \bigo{\e^{2(1-\g)}}$, we have that the linear operator on the left hand side can be written as 
$$
-C_{n,k} \, v_\Sig \big[ \, \partial^{2}_{t}+\tfrac{n-k}{k(n-1)}\D_{\t}-\big(\tfrac{n-2k}{2k}\big)^{2}   
 +  \e^{2(1-\g)} \, \mathcal{P}
\big]
$$
where $\mathcal{P}$ is a linear second order partial differential operator with bounded coefficients. Using separation of variables we write $\bw$ as 
$$
\bw \,\, = \,\, \sum_{i=0}^{+\infty}\bw^i(t)\cdot\phi_{i}(\theta).
$$ 
Projecting along the $j$-th component, we obtain 
\bea
\left\{
\begin{split}
\partial^{2}_{t} \bwj -\big[\tfrac{n-k}{k(n-1)}\lambda_{j} +\big(\tfrac{n-2k}{2k}\big)^{2}\big] \, \bwj+\e^{2(1-\g)} \langle{\mathcal{P}} \, \bw,\phi_j\rangle_{L^2(\mathbb{S}^{n-1})} &= 0 \hs t\in[\g\log\e,0)\\
\bwj(0) &= 1 
\end{split}
\right.
\eea
As in previous subsection we let $\mu_i$ be the real number 
$$
\mu_{i}:=\sqrt{\tfrac{n-k}{k(n-1)}\lambda_{i} +\big(\tfrac{n-2k}{2k}\big)^{2}} \quad i \in \mathbb{N}.
$$ 
Let $\chi$ be a positive smooth non decreasing cutoff function defined on $[\log\e, 0]$, such that $\chi(t)=1$ for all $t\in[\g\log\e+1,0]$ and $\chi(t)=0$ for all $t\in[\log\e,\g\log\e]$. Multplying the equation above by 
$\chi(t)e^{\mu_{j}t}$ and integrating by parts, yields
\bea
\dt\bwj(0)-\mu_{j}&=&\int_{\g\log\e}^{0}\big[\tfrac{n-k}{k(n-1)}\lambda_{j} +\big(\tfrac {n-2k}{2k}\big)^{2}-\mu_{j}^{2}\big] \, \bwj(t)\,\chi(t)e^{\mu_{j}t}\,dt\\
&-&2\mu_{j}\int_{\g\log\e}^{\g\log\e+1}\bwj(t)\,(\dt\chi)(t)e^{\mu_{j}t}\,dt \,\, - \int_{\g\log\e}^{\g\log\e+1}\bwj(t)\,(\partial_{t}^{2}\chi)(t)e^{\mu_{j}t}\,dt\\
&+&\e^{2(1-\g)}\int_{\g\log\e}^{0}\,\langle{\mathcal{P}} \, \bw,\phi_j\rangle_{L^2(\mathbb{S}^{n-1})}  \, \chi(t)\, e^{\mu_{j}t}\,dt.
\eea
We claim that the right hand side tends to zero as $\e\rightarrow 0$. By Proposition \ref{stime:local2} we have that for every fixed $\d\in\left(-\f{n-2k}{2k},\f{n-2k}{2k}\right)$ and every $\e \in (0, \e_0]$
$$
|\bwj(t)| \, \leq  \, C\,e^{-\d t}  \quad t \in \R \,\,,
$$ 
where $C>0$ is a uniform positive constant.
Since $\mu_{j}>|\d|$, we get that there exist a positive constant $B=B(n,k,\d,\g)$ such that 
$$
|\dt\bwj(0)-\mu_{j}|\leq B\,\e^{\nu},
$$
where $\nu := \min \{ 2 (1-\g) , \g (\mu_0 - \d)   \}>0$. Now, the converge in norm of the operator $T_{\e}$ to $T_{0}$ for $\e\rightarrow 0$ follows easily. Infact, using separation of variables and writing $\psi$ as $\psi=\sum_{j=0}^{+\infty}\psi^{j}\phi_{j}(\theta)$, we get
\bea
\Vert(T_{\e}-T_{0})\,\psi\Vert^{2}_{L^{2}(\mathbb{S}^{n-1})} &=& \Vert\hbox{$\sum_{j=0}^{+\infty}$}\psi^{j}(T_{\e}-T_{0})\,(\phi_{j})\Vert^{2}_{L^{2}(\mathbb{S}^{n-1})}  \,\,\, = \,\,\, \Vert \hbox{$\sum_{j=0}^{+\infty}$}\psi^{j}\left(\dt\bwj(0)-\mu_{j}\right)\,\phi_{j}\Vert^{2}_{L^{2}(\mathbb{S}^{n-1})}\\
&=& \hbox{$\sum_{j=0}^{+\infty}$}\left|\psi^{j}(\dt\bwj(0)-\mu_{j})\right|^{2}  \,\,\, \leq \,\,\,  B^{2}\,\e^{2\nu}\Vert\psi\Vert^{2}_{H^{1}(\mathbb{S}^{n-1})}\,\,,
\eea
which ends the proof of the proposition.
\end{proof}
In the same way, we can define the Dirichlet to Neumann map for the problem
\bea
\left\{
\begin{split}
\mathbb{L}_{\bg}(u_{\e})\,\bw &= 0 \hs\Ode\\
\bw &= \psi \hs\partial\Ode
\end{split}
\right.
\eea
as
\bea
S_{\e}:\,C^{2,\b}(\mathbb{S}^{n-1})\longrightarrow C^{1,\b}(\mathbb{S}^{n-1}),\hs S_{\e}:\, \psi \longmapsto \dt{\bw_{\psi}}_{\left|_{\partial\Ode}\right.}=-\partial_{\nu}{\bw_{\psi}}_{\left|_{\partial\Ode}\right.},
\eea 
where $\partial_\nu$ is the outward normal derivative to $\Oue$, as before.
\begin{pro} 
\label{D to N 2}
As $\e\rightarrow 0$, the operators $S_{\e}$ converge in norm to a limit operator $S_{0}$ acting between $H^{1}(\mathbb{S}^{n-1})$ and $L^{2}(\mathbb{S}^{n-1})$. Moreover the operator $S_{0}$ is determined by its values on the eigenfunctions $\phi_{j}$ of the Laplacian on $\mathbb{S}^{n-1}$, namely
\bea
S_{0}\,\phi_{j}&=&-\sqrt{\tfrac{n-k}{k(n-1)}\lambda_{j}+\left(\tfrac{n-2k}{2k}\right)^{2}}\phi_{j}\hspace{0.5cm}  j\in\mathbb{N}.
\eea
\end{pro}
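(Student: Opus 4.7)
The plan is to mirror the proof of Proposition \ref{D to N 1} verbatim, making the two orientation-related adjustments that come from working on $\Ode$ in place of $\Oue$. In the asymptotic cylindrical coordinates of Section \ref{s:as}, the region $\Ode\cap T_\e$ corresponds to $(0,-\log\e)\times\mathbb{S}^{n-1}$ and the outward unit conormal to $\partial\Ode$ at the common boundary $\{0\}\times\mathbb{S}^{n-1}$ is $-\partial_t$. This second fact is precisely what produces the minus sign in the eigenvalues of the limiting operator $S_0$.

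Fix $\g\in(0,1)$. Applying the conformal equivariance property \eqref{confequi} on $(0,-\g\log\e)\times\mathbb{S}^{n-1}$, and using that $b=\bigo{\e^{2(1-\g)}}$ there, one recasts the homogeneous problem as
\bea
\big[\mathbb{L}^{0}_{cyl}(v_\Sig(1+b))-\e^{2k}\hbox{${n\choose k}$}\big(\tfrac{n-2k}{4k}\big)^k\tfrac{2kn}{n-2k}(v_\Sig(1+b))^{\frac{2kn}{n-2k}-1}\big]\big[(1+b)\bw\big]=0,
\eea
whose leading part is the model operator $-C_{n,k}\,v_\Sig\bigl[\partial_t^{2}+\tfrac{n-k}{k(n-1)}\D_\t-(\tfrac{n-2k}{2k})^{2}\bigr]$ plus an $\e^{2(1-\g)}$-small second order perturbation $\mathcal{P}$. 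Taking $\psi=\phi_j$ and writing $\bw=\sum_i \bw^i(t)\,\phi_i(\t)$, projection onto the $j$-th spherical mode gives
\bea
\partial_t^{2}\bw^j-\mu_j^{2}\,\bw^j+\e^{2(1-\g)}\,\langle \mathcal{P}\bw,\phi_j\rangle_{L^{2}(\mathbb{S}^{n-1})}=0,\qquad \bw^j(0)=1,
\eea
on $[0,-\g\log\e]$, with $\mu_j:=\sqrt{\tfrac{n-k}{k(n-1)}\lambda_j+(\tfrac{n-2k}{2k})^{2}}$ as in Proposition \ref{D to N 1}. The decisive change with respect to that proof is that one multiplies by $\chi(t)e^{-\mu_j t}$ instead of $\chi(t)e^{\mu_j t}$, where $\chi$ is now a smooth non-increasing cutoff on $[0,-\log\e]$, equal to $1$ on $[0,-\g\log\e-1]$ and vanishing on $[-\g\log\e,-\log\e]$. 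Two integrations by parts, using $\bw^j(0)=1$ and $\chi\equiv 0$ near $-\g\log\e$, isolate $\partial_t\bw^j(0)+\mu_j$ on the left-hand side and produce on the right integrals against $\partial_t\chi,\partial_t^{2}\chi$ together with an $\e^{2(1-\g)}$ term.

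To control those remainders I would invoke Corollary \ref{stime:local2h} on $\Ode$, which supplies the uniform pointwise bound $|\bw^j(t)|\leq C\cosh(t)^{-\d}$ for every fixed $\d\in(-\tfrac{n-2k}{2k},\tfrac{n-2k}{2k})$ and all $\e$ small. Since $\mu_j\geq \mu_0>|\d|$, the product $|\bw^j(t)|\,e^{-\mu_j t}$ evaluated on the support of $\partial_t\chi,\partial_t^{2}\chi$ near $t=-\g\log\e$ is of order $\e^{\g(\mu_0+\d)}$, yielding $|\partial_t\bw^j(0)+\mu_j|\leq B\,\e^{\nu}$ for some $\nu=\nu(n,k,\d,\g)>0$, uniformly in $j$. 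Summing the squares of the Fourier coefficients exactly as in Proposition \ref{D to N 1},
\bea
\Vert (S_\e-S_0)\psi\Vert^{2}_{L^{2}(\mathbb{S}^{n-1})}=\hbox{$\sum_{j=0}^{+\infty}$}|\psi^j|^{2}\,|\partial_t\bw^j(0)+\mu_j|^{2}\leq B^{2}\e^{2\nu}\Vert\psi\Vert^{2}_{H^{1}(\mathbb{S}^{n-1})},
\eea
gives the norm convergence. No genuine new difficulty arises; the only care needed is the sign contribution from the reversed outward normal on $\partial\Ode$, which is already absorbed in the sign of the exponential weight $e^{-\mu_j t}$.
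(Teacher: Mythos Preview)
Your proof is correct and takes essentially the same approach as the paper, which simply states that the proof is identical to that of Proposition \ref{D to N 1}. You have correctly identified the two sign adjustments---the domain $(0,-\g\log\e)\times\mathbb{S}^{n-1}$ and the multiplier $\chi(t)e^{-\mu_j t}$---that account for the reversed orientation on $\Ode$ and yield the eigenvalues $-\mu_j$ for $S_0$.
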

The proof is identical to the one of Proposition  \ref{D to N 1}.

\

\subsection{Cauchy data matching}
Let $w_{i}$ and $\bw_{i} = \bw_i(\psi)$, $i=1,2$, be the solutions to the problems
\bea
\left\{
\begin{split}
\mathbb{L}_{\bg}(u_{\e})\,w_{i} &= f \hs\Omega_{i,\e}\\
w_{i} &= 0 \hs\partial\Omega_{i,\e}
\end{split}
\right.
\quad & \quad\quad   \hbox{and} \quad & \quad \quad 
\left\{
\begin{split}
\mathbb{L}_{\bg}(u_{\e})\,\bw_{i} &= 0 \hs\Omega_{i,\e}\\
\bw_{i} &= \psi \quad \quad \, \partial\Omega_{i,\e}
\end{split}
\right.
\eea
We define the global function $w$ as
\be\label{solution}
w:=\left\{
\begin{split}
w_{1}+\bw_{1}(\psi) \hs\Oue\\
w_{2}+\bw_{2}(\psi) \hs \Ode
\end{split}
\right.
\ee
We claim that for $\e$ sufficiently small, there exists a function $\psi$, such that
$$\partial_{\nu}(w_{1}+\bw_{1}(\psi))=-\partial_{\nu}(w_{2}+\bw_{2}(\psi)).$$ This is equivalent to 
$$\partial_{\nu}\,w_{1}+\partial_{\nu}\,w_{2}=-\left(T_{\e}-S_{\e}\right)(\psi).$$
Hence, we need to invert the operator 
\bea
(T_\e - S_\e) \,\, : \,\, C^{2,\b} (\mathbb{S}^{n-1}) & \longrightarrow & C^{1,\b}(\mathbb{S}^{n-1}) 
\eea
\begin{lem}
There exists a positive real number $\e_0 = \e_0(n,k,\d)>0$ such that for every $\e \in (0, \e_0]$ and for every $\eta \in C^{1,\b}(\Sp)$ there exists a unique $\psi \in C^{2,\b}(\Sp)$ such that
$$
(T_{\e}-S_{\e})(\psi)=\eta.
$$
Moreover, there exist a positive constant $C=C(n,k,\d)>0$ such that 
$$
\Vert\psi\Vert_{C^{2,\b}(\mathbb{S}^{n-1})}\leq C\,\Vert\eta\Vert_{C^{1,\b}(\mathbb{S}^{n-1})}.
$$
\end{lem}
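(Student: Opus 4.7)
The plan is to exploit the explicit spectral description of the limit operator provided by Propositions~\ref{D to N 1} and~\ref{D to N 2}. On the eigenfunctions $\phi_j$ of $-\Delta_{\Sp}$ one has
\[
(T_0 - S_0)\phi_j \,=\, 2\mu_j\,\phi_j, \qquad \mu_j \,=\, \sqrt{\tfrac{n-k}{k(n-1)}\lambda_j + \big(\tfrac{n-2k}{2k}\big)^{2}}.
\]
Since $\mu_j \geq \tfrac{n-2k}{2k}>0$ for every $j\in\N$ and $\mu_j\sim\sqrt{\lambda_j}$ as $j\to+\infty$, the operator $T_0-S_0$ is (twice) a first-order, self-adjoint, positive elliptic pseudodifferential operator on $\Sp$, comparable to $2(-\Delta_{\Sp}+c_{n,k})^{1/2}$. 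In particular $T_0-S_0$ is an isomorphism $H^{s+1}(\Sp)\to H^s(\Sp)$ for every $s\in\R$ and an isomorphism $C^{r+1,\b}(\Sp)\to C^{r,\b}(\Sp)$ for every $r\in\N$, with inverse of operator norm bounded by $(2\mu_0)^{-1}=\tfrac{k}{n-2k}$ on each factor in the first family of spaces.

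First I would invert $T_\e-S_\e$ in the $H^1\to L^2$ setting via a Neumann series. Writing
\[
T_\e - S_\e \,=\, (T_0-S_0)\Big[\,\mathrm{Id} \,+\, (T_0-S_0)^{-1}\big((T_\e-T_0) - (S_\e-S_0)\big)\Big],
\]
the norm convergence guaranteed by Propositions~\ref{D to N 1} and~\ref{D to N 2} makes the correction in brackets a small perturbation of the identity for every $\e\in(0,\e_0]$, once $\e_0=\e_0(n,k,\d)>0$ is chosen small enough. Given $\eta\in C^{1,\b}(\Sp)\hookrightarrow L^{2}(\Sp)$ one therefore obtains a unique $\psi\in H^1(\Sp)$ solving $(T_\e-S_\e)(\psi)=\eta$, together with the uniform bound
\[
\|\psi\|_{H^1(\Sp)} \,\leq\, C\,\|\eta\|_{L^{2}(\Sp)} \,\leq\, C\,\|\eta\|_{C^{1,\b}(\Sp)}.
\]

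Second, I would upgrade this to the desired Hölder estimate. By construction $\psi$ is the common Dirichlet trace on $\partial\Oue=\partial\Ode$ of the solutions $\bw_1(\psi)$ on $\Oue$ and $\bw_2(\psi)$ on $\Ode$ to the homogeneous problem~\eqref{eq:localh}, and $\eta=\p_\nu\bw_1(\psi)+\p_\nu\bw_2(\psi)$ is precisely the jump of their conormal derivatives. Gluing these two pieces across the interface produces a distributional solution of $\mathbb{L}_{\bg}(u_\e)[\bw]=0$ modulo a conormal jump in $C^{1,\b}$, to which a standard transmission-type boundary Schauder argument applies, yielding
\[
\|\psi\|_{C^{2,\b}(\Sp)} \,\leq\, C\,\big(\|\eta\|_{C^{1,\b}(\Sp)} + \|\psi\|_{L^{2}(\Sp)}\big) \,\leq\, C\,\|\eta\|_{C^{1,\b}(\Sp)}.
\]

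The main obstacle will be to make this last constant $C$ uniform in $\e$. This is however ensured by the geometric setup of Section~\ref{s:as}: in a fixed collar neighbourhood of $\partial\Oue$ the background metric $\bg$ has the scaled cylindrical form $(1+c)g_{cyl}$ with $c=\bigo{\e^{2}}$, so the coefficients of $\mathbb{L}_{\bg}(u_\e)$ are uniformly elliptic and enjoy $\e$--independent $C^{0,\b}$ bounds there, while Corollary~\ref{stime:local2h} provides $\e$--uniform weighted bounds on $\bw_1(\psi)$ and $\bw_2(\psi)$ themselves. Thus the Schauder constants depend only on $n$, $k$, $\b$, and the proof is complete.
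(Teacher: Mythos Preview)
Your proposal is correct and follows essentially the same route as the paper: both arguments first invert $T_\e-S_\e$ in the $H^1\to L^2$ setting by exploiting the spectral description $(T_0-S_0)\phi_j=2\mu_j\phi_j$ together with the norm convergence of Propositions~\ref{D to N 1} and~\ref{D to N 2}, and then upgrade to the H\"older estimate. The only cosmetic difference is that the paper writes down the inverse of $T_0-S_0$ explicitly via Fourier series and checks $(1+\lambda_j)/4\mu_j^2\leq C$, whereas you package the same thing as a Neumann-series perturbation; and for the H\"older step the paper simply invokes ``standard elliptic theory for first order pseudodifferential operators with bounded spectrum,'' while you spell out a transmission-type Schauder argument with $\e$--uniform constants.
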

\begin{proof}
As a first step, for $\e$ sufficiently small, we will prove the invertibility of $(T_\e - S_\e)$ as operator acting between $H^1(\mathbb{S}^{n-1})$ and $L^2(\mathbb{S}^{n-1})$ and in this context we will provide uniform {\em a priori} estimates for solutions to 
$$
(T_{\e}-S_{\e})(\psi)=\eta.
$$
The analogous result in H\"older spaces will follow from the standard elliptic theory for first order pseudodifferential operators with bounded spectrum.

\

From Proposition \ref{D to N 1} and Proposition \ref{D to N 2} we deduce that the operators $(T_{\e}-S_{\e})$ converge in norm to the linear operator $T_{0}-S_{0}$, defined as follows
$$
(T_{0}-S_{0})(\phi_{j})=2\mu_{j}\phi_{j}  \quad \quad \mu_{j}=\sqrt{\tfrac{n-k}{k(n-1)}\lambda_{j} +\big(\tfrac{n-2k}{2k}\big)^{2}} \quad j \in \mathbb{N}
$$
where, as usual, the functions $\phi_j$'s are the eigenfunctions of $\Delta_{\mathbb{S}^{n-1}}$ and verify $- \Delta \phi_j = \lambda_j \phi_j$, $j \in \mathbb{N}$. Hence it is sufficient to show that the limit operator $(T_0 - S_0)$ is invertible and verifies the {\em a priori} estimates. Using the Fourier expansion for $\eta$, namely $\eta=\sum_{j=0}^{+\infty}\eta^{j}\phi_{j}$, we have that the ansatz for $\psi$ is given by $\sum_{j=0}^{+\infty} (\eta^{j}/2\mu_j)\phi_{j}$. Now we need to verify that this function lies in $H^1(\mathbb{S}^{n-1})$, in other words we need to test that 
$$
\hbox{$\sum_{j=0}^{+\infty}$} (1+\lambda_j) |(\eta^j/2\mu_j)|^2  <  +\infty.
$$
From the definition of the $\mu_j$'s it is straightforward to deduce that there exists a positive constant $C=C(n,k)>0$ such that $(1+\lambda_j)/4\mu_j^2 \leq C$. Thus
$$
\hbox{$\sum_{j=0}^{+\infty}$} (1+\lambda_j) |(\eta^j/2\mu_j)|^2  \leq C \, \nor{\eta}{L^2(\mathbb{S}^{n-1})}^2 .
$$
Setting $\psi := \sum_{j=0}^{+\infty} (\eta^{j}/2\mu_j)\phi_{j} \in H^1(\mathbb{S}^{n-1})$ we have that $\psi$ solves the desired equation with the estimate
$$
\nor{\psi}{H^1(\mathbb{S}^{n-1})} \leq C^{1/2} \nor{\eta}{L^2(\mathbb{S}^{n-1})}.
$$
This completes the proof.
\end{proof}

So now we can define the function $\psi$ as 
$$\psi:=(T_{\e}-S_{\e})^{-1}\left[-\partial_{\nu}w_{1}-\partial_{\nu}w_{2}\right].$$ Moreover, using the previous lemma and Proposition \ref{stime:local2}, for $\e$ sufficiently small, we have the uniform bound
\bea
\Vert\psi\Vert_{C^{2,\b}(\mathbb{S}^{n-1})}&\leq& C\left[\Vert\partial_{\nu}w_{1}\Vert_{C^{1,\b}(\mathbb{S}^{n-1})}+\Vert \, \partial_{\nu}w_{2}\Vert_{C^{1,\b}(\mathbb{S}^{n-1})}\right]\\
&\leq&  C_1 \, \e^{-\d} \big[ \, \Vert w_{1}\Vert_{C^{2,\b}_{\d}(\Oue)}+\Vert w_{2}\Vert_{C^{2,\b}_{\d}(\Ode)}\big]\\
&\leq& C_2 \, \e^{-\d} \Vert f\Vert_{C^{\,0,\b}_{\d - \frac{n-2k}{2k}(2k-1)}(M_{\e})},
\eea
where the positive constant $C_1>0$ and $C_2>0$ only depend on $n,k$ and $\d$. From this estimate, together with Proposition \ref{stime:local2} and Proposition \ref{stime:local2h} we obtain
\bea
\Vert w\Vert_{C^{2,\b}_{\d}(M_{\e})}&\leq&\Vert w_{1}\Vert_{C^{2,\b}_{\d}(\Oue)}+\Vert \bw_{1}(\psi)\Vert_{C^{2,\b}_{\d}(\Oue)}+\Vert w_{2}\Vert_{C^{2,\b}_{\d}(\Ode)}+\Vert \bw_{2}(\psi)\Vert_{C^{2,\b}_{\d}(\Ode)}\\
&\leq& C_3 \Big[ \, \Vert f\Vert_{C^{0,\b}_{\d - \frac{n-2k}{2k}(2k-1)}(M_{\e})}+ \e^\d \Vert \psi\Vert_{C^{2,\b}(\mathbb{S}^{n-1})}\Big]\\
&\leq& C_4 \Vert f\Vert_{C^{0,\b}_{\d - \frac{n-2k}{2k}(2k-1)}(M_{\e})},
\eea
where the positive constant $C_3>0$ and $C_4>0$ only depend on $n,k$ and $\d$.

\medskip

We collect all the results of this section in the following 
\begin{pro}
\label{stime:global} 
Let $\d \in \big( -\tfrac{n-2k}{2k} , \tfrac{n-2k}{2k}\big)$, then there exists a real number $\e_0= \e_0 (n,k,\d) > 0$ such that for every $\e \in (0, \e_0]$ and every $f \in C^{0,\b}(M_\e)$ there exists a unique solution $w\in C^{2,\b}(M_\e)$ to the problem
\bea
\mathbb{L}_{\bg}(u_{\e})\,[w] &=& f \hs M_\e.
\eea
Moreover, there exists a positive constant $C= C(n,k,\d)>0$ such that for every $\e\in (0,\e_0]$
\bea
\Vert w\Vert_{C^{2,\b}_{\d}(M_{\e})} \leq  C\Vert f\Vert_{C^{ \, 0,\b}_{\d - \frac{n-2k}{2k}(2k-1)}(M_{\e})}.
\eea
\end{pro}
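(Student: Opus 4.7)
The plan is to assemble the Proposition from the three ingredients already in place: the local Dirichlet estimate of Proposition \ref{stime:local2}, the boundary--value estimate of Corollary \ref{stime:local2h}, and the invertibility of the jump operator $(T_\e-S_\e)$ just established. First I would split the equation $\mathbb{L}_{\bg}(u_\e)[w]=f$ on $M_\e$ into two Dirichlet problems on $\Omega_{1,\e}$ and $\Omega_{2,\e}$, exploiting the fact that these two pieces share the common boundary $\{0\}\times\Sp$ coming from the neck construction of Section \ref{s:as}. Namely, for $\e$ small enough, Proposition \ref{stime:local2} produces unique solutions $w_i\in C^{2,\b}_\d(\Omega_{i,\e})$ of
\[
\mathbb{L}_{\bg}(u_\e)[w_i]=f\ \text{in}\ \Omega_{i,\e},\qquad w_i=0\ \text{on}\ \partial\Omega_{i,\e},\qquad i=1,2,
\]
with the uniform bound $\|w_i\|_{C^{2,\b}_\d(\Omega_{i,\e})}\le C\,\|f\|_{C^{0,\b}_{\d-\frac{n-2k}{2k}(2k-1)}(M_\e)}$.

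The function $w_1\sqcup w_2$ is only $C^0$ across the common boundary. To upgrade the matching to $C^1$ I would add homogeneous corrections $\bar w_i(\psi)\in C^{2,\b}_\d(\Omega_{i,\e})$ obtained via Corollary \ref{stime:local2h}, with a common Dirichlet datum $\psi\in C^{2,\b}(\Sp)$ to be determined. Imposing the matching $\partial_\nu(w_1+\bar w_1(\psi))=-\partial_\nu(w_2+\bar w_2(\psi))$ reduces to the pseudodifferential equation $(T_\e-S_\e)\psi=-\partial_\nu w_1-\partial_\nu w_2$ on $\Sp$. By the Cauchy data matching lemma of the preceding subsection, this can be inverted uniformly in $\e$ and yields
\[
\|\psi\|_{C^{2,\b}(\Sp)}\le C\bigl[\|\partial_\nu w_1\|_{C^{1,\b}(\Sp)}+\|\partial_\nu w_2\|_{C^{1,\b}(\Sp)}\bigr]\le C\,\e^{-\d}\,\|f\|_{C^{0,\b}_{\d-\frac{n-2k}{2k}(2k-1)}(M_\e)},
\]
where the factor $\e^{-\d}$ is the conversion between the unweighted $C^{1,\b}$--norm on $\Sp=\{0\}\times\Sp$ (evaluated where $\e\cosh t\equiv \e$) and the weighted norm $C^{2,\b}_\d$. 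Plugging $\psi$ back into Corollary \ref{stime:local2h} yields $\|\bar w_i(\psi)\|_{C^{2,\b}_\d(\Omega_{i,\e})}\le C\e^\d\|\psi\|_{C^{2,\b}(\Sp)}$, so the offending power of $\e$ cancels and $w:=(w_1+\bar w_1(\psi))\sqcup(w_2+\bar w_2(\psi))$ is a $C^{2,\b}$ global weak solution on $M_\e$ satisfying the announced estimate (elliptic regularity of $\mathbb{L}_{\bg}(u_\e)$ upgrades the regularity across $\{0\}\times\Sp$).

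Uniqueness and existence for general $f$ will follow from the a priori bound via the Fredholm alternative applied to $\mathbb{L}_{\bg}(u_\e)$ on the weighted H\"older spaces (cf.\ \cite{gt}): the estimate forces the kernel to be trivial, hence invertibility on the complement, and the full statement on $C^{2,\b}_\d(M_\e)$ then follows. The delicate point, and the only place where the admissible range $\d\in(-\tfrac{n-2k}{2k},\tfrac{n-2k}{2k})$ is really used, is the compensation between the $\e^{-\d}$ blow--up of boundary normal derivatives and the $\e^{\d}$ decay of the homogeneous extension; outside this range either $(T_\e-S_\e)$ would fail to capture the right modes (see the eigenvalue computation $\mu_j\ge\tfrac{n-2k}{2k}>|\d|$ in Proposition \ref{D to N 1}) or Proposition \ref{stime:local2} would fail by the blow--up analysis of Case 2 and Case 3. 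I expect the only technical care needed in writing this out is to verify that the various constants really depend only on $n,k,\d$ and not on $\e$, which has already been checked step by step in the preceding subsections.
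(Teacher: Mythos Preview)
Your proposal is correct and follows essentially the same route as the paper: decompose $M_\e$ into $\Omega_{1,\e}\cup\Omega_{2,\e}$, solve the zero--Dirichlet problems via Proposition~\ref{stime:local2}, add homogeneous corrections from Corollary~\ref{stime:local2h}, choose $\psi=(T_\e-S_\e)^{-1}[-\partial_\nu w_1-\partial_\nu w_2]$ via the Cauchy data matching lemma, and observe the $\e^{-\d}/\e^{\d}$ cancellation in the chain of estimates. The paper carries out exactly this computation in the paragraphs immediately preceding the statement of the Proposition; your additional remarks on regularity across $\{0\}\times\Sp$ and on where the range of $\d$ is genuinely used are accurate and in the spirit of the surrounding analysis.
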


\begin{rem}
\label{high}
We point out that using standard elliptic theory it is possible to extend the estimates above to higher order derivatives, without changing the weight parameters. The only difference is that the constant $C$ will possibly depend on the number of the derivatives involved.
\end{rem}

\

\section{Nonlinear analysis}
\label{s:nonlinear}

Now we are ready to solve the fully nonlinear equation 
\be
\label{eq:nonlinear}
\mathcal{N}_{\bg}\,(u_{\e} + w) & = & 0.
\ee
Thanks to Proposition \ref{stime:global}, which provides invertibility for the operator $\mathbb{L}{\bg}(u_\e)$,  this amounts to solve the fixed point problem
\be
\label{fixed point}
w & = & \mathbb{L}_{\bg}(u_\e)^{-1} \, \big[ \, - \mathcal{N}_{\bg}(u_\e) \, - \, \mathcal{Q}_{\bg}(u_\e) (w) \, \big] ,
\ee
where we recall that the quadratic remainder is given by
$$
\mathcal{Q}_{\bg}(u_{\e})\, (w)\,\, : = \,\, -\int_{0}^{1}\big[ \, \mathbb{L}_{\bg}(u_{\e})-\mathbb{L}_{\bg}(u_{\e}+sw) \, \big] \, [w]\,ds.
$$
We will find the fixed point $w$ as the limit of the sequence $\{w_i\}_{i\in \mathbb{N}}$ defined by means of the following Newton iteration scheme
\be
\label{newton}
\begin{cases}
\,\, w_0 & : = \quad 0\\
\,\, w_{i+1} & : = \quad  \mathbb{L}_{\bg}(u_\e)^{-1} \, \big[ \, - \mathcal{N}_{\bg}(u_\e) \, - \, \mathcal{Q}_{\bg}(u_\e) (w_i) \, \big] , \quad i \in \mathbb{N}.
\end{cases}
\ee

\subsection{Estimate of the proper error}

As a first step we estimate the {\em proper error} term 
$\NN{\bg}{u_\e}$, which is supported in the neck region $T_\e$. It is convenient to divide this region into three subdomains $T_{1,\e} := [\log \e , (2k/n) \log \e] \times \Sp$, $T_{2,\e} := [-(2k/n)\log \e, -\log \e] \times \Sp$ and $T_{\Sig,\e}:= [(2k/n)\log \e,-(2k/n)\log \e] \times \Sp$. We start by considering the proper error on $T_{1,\e}$. With the only exception of the annulus $[\log \e, \log\e +1] \times \Sp$ (where it is easy to verify that the estimate that we are going to obtain is even better), on this region we can write
$$
g_1 = u_1^{\frac{4k}{n-2k}} (1+b_1)^{\frac{4k}{n-2k}} g_{cyl}  \quad \hbox{and} \quad u_\e = u_1 \big( 1+e^{\frac{n-2k}{2k}t } \big) .
$$
Combining these two expression with the conformal equivariance property \eqref{confequi}, we obtain
\bea
\NN{\bg}{u_\e} & = & u_1^{\frac{2kn}{n-2k}} \mathcal{N}_{g_1}(1+e^{\frac{n-2k}{k}t}) \\
& = & u_1^{\frac{2kn}{n-2k}} \, \big\{\, \L{g_1}{1}{e^{\frac{n-2k}{k}t}}   \, + \, \Q{g_1}{1}{e^{\frac{n-2k}{k}t}} \, \big\} \\
& =&  (1+b_1)^{-{\frac{2kn}{n-2k}}} \big\{\, \L{cyl}{(1+b_1) u_1}{(1+b_1) u_1 e^{\frac{n-2k}{k}t}}  \, + \, \Q{cyl}{(1+b_1)u_1}{(1+b_1)u_1e^{\frac{n-2k}{k}t}} \, \big\}, 
\eea
since $\NN{g_1}{1} = 0$. Due to the fact that the coefficients of $\L{cyl}{(1+b_1) u_1}{\,\cdot \,}$ are readily estimated as $\mathcal{O}\big( \e^{\frac{n-2k}{2k}(2k-1)} e^{-\frac{n-2k}{2k}(2k-1)t}\big)$, we obtain
\bea
\NN{\bg}{u_\e} & \simeq & (\e \cosh t)^{n-2k} e^{\frac{n-2k}{k}\, t} \quad \hbox{in} \,\, T_{1,\e}.
\eea
Using the same argument it is straightforward to verify that
\bea
\NN{\bg}{u_\e} & \simeq & (\e \cosh t)^{n-2k} e^{-\frac{n-2k}{k}\, t} \quad \hbox{in} \,\, T_{2,\e}.
\eea
In the remaining region, namely $T_{\Sig,\e}$, we set $g_{\Sig,\e} := u_\e^{4k/(n-2k)} g_{cyl}$ and we write $
\bg =  (1+b)^{{4k}/({n-2k})} g_{cyl}$. From the conformal equivariance \eqref{confequi}, we obtain 
\bea
\NN{\bg}{u_\e} & = & ((1+b)u_\e^{-1})^{- \frac{2kn}{n-2k}} \mathcal{N}_{\Sig,\e}(1 +b ) \\
& = & ((1+b)u_\e^{-1})^{- \frac{2kn}{n-2k}} \, \big\{\, \NN{\Sig,\e}{1} \, + \, \L{\Sig,\e }{1}{ b }   \, + \, \Q{\Sig,\e}{1}{ b } \, \big\} \\
& =&  (1+b)^{-{\frac{2kn}{n-2k}}} \big\{ - \hbox{${n \choose k}$} \big(\tfrac{n-2k}{4k}\big)^k u_\e^{\frac{2kn}{n-2k}}   +    \L{cyl}{ u_\e}{ u_\e  b}  \, + \, \Q{cyl}{u_\e}{ u_\e  b} \, \big\}, 
\eea
since $\sigma_k(B_{\Sig,\e}) = 0$. Recalling the expression of the (homogeneous) linearized operator around a $\sigma_k$--Schwarzschild metric, we have
$$
\L{cyl}{u_\e}{u_\e  b} \, = \, -C_{n,k} \, \e^{\tfrac{n-2k}{k} (k-1)} u_\e \, \big[ \, \partial^{2}_{t}+\tfrac{n-k}{k(n-1)}\D_{\t}-\big(\tfrac{n-2k}{2k}\big)^{2}\big] \, [u_\e b] \, - \, \hbox{${n \choose k}$} \big( \tfrac{n-2k}{4k}\big)^k \tfrac{2kn}{n-2k} u_\e^{\frac{2kn}{n-2k}} b.
$$
Due to the fact that in $T_{\Sig,\e}$ one has $b=\mathcal{O}(\e^2 e^{-2t})$ and $u_\e = \mathcal{O}\big((\e\cosh t)^{(n-2k)/2k}\big)$, we infer that
\bea
\NN{\bg}{u_\e} & \simeq &  \e^{{n-2k} + 2} (\cosh t)^{\frac{n}{k}} \quad \hbox{in} \,\, T_{\Sig,\e}.
\eea
From these computations and from the definition of the weighted H\"older spaces it follows at once the following
\begin{lem}
\label{error estimate}
There exists a positive constant $A=A(n,k)>0$ such that for every $\d \in \big(-\tfrac{n-2k}{2k} , \tfrac{n-2k}{2k} \big)$ the proper error is estimated as
\bea
\nor{\, \NN{\bg}{u_\e} \,}{C^{0,\b}_{\d-\frac{n-2k}{2k}(2k-1)}(M_\e)} & \leq & A \, \e^{\frac{n-2k}{n} ( \frac{n+2k}{2k}  + \d )} .
\eea
\end{lem}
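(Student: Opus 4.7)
The proof is essentially a bookkeeping exercise based on the three pointwise estimates for $\mathcal{N}_{\bg}(u_\e)$ just derived on the subregions $T_{1,\e}$, $T_{2,\e}$, $T_{\Sig,\e}$ of the neck, together with the fact that $\mathcal{N}_{\bg}(u_\e)$ vanishes identically off $T_\e$. My plan is first to derive the claimed weighted $C^0$ bound region by region, and then to promote it to the full $C^{0,\b}$ bound via a standard rescaling argument.

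On $T_{1,\e}=[\log\e,(2k/n)\log\e]\times\Sp$, the pointwise bound $|\mathcal{N}_{\bg}(u_\e)|\lesssim (\e\cosh t)^{n-2k}e^{(n-2k)t/k}$ multiplied by the weight $(\e\cosh t)^{\d-(n-2k)(2k-1)/(2k)}$ and simplified via $\cosh t\simeq e^{-t}/2$ (valid since $t\le (2k/n)\log\e\ll -1$) reduces to
\[
\e^{\d+(n-2k)/(2k)}\,e^{\,t\,[(n-2k)/(2k)-\d]}.
\]
Under the hypothesis $\d<(n-2k)/(2k)$ the coefficient of $t$ is strictly positive, so the sup over $t\in T_{1,\e}$ is attained at the right endpoint $t=(2k/n)\log\e$, producing exactly $\e^{(n-2k)(\d+(n+2k)/(2k))/n}$. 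The computation on $T_{2,\e}$ is identical after the change of variables $t\mapsto -t$.

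On $T_{\Sig,\e}=[(2k/n)\log\e,-(2k/n)\log\e]\times\Sp$, combining the estimate $|\mathcal{N}_{\bg}(u_\e)|\lesssim \e^{n-2k+2}(\cosh t)^{n/k}$ with the weight and the bound $\cosh t\lesssim \e^{-2k/n}$ yields $\e^{(n-2k)(\d+n/(2k)+2k-1)/n}$. Since $2k-1\ge 1$ for $k\ge 1$, this exponent is at least as large as $(n-2k)(\d+(n+2k)/(2k))/n$, so the $T_{\Sig,\e}$ contribution is dominated by the $T_{1,\e}\cup T_{2,\e}$ contribution, and taking the max over the three pieces yields the weighted $C^0$ bound.

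Finally, to upgrade from $C^0$ to $C^{0,\b}$ I would invoke the standard dilation device used in Proposition \ref{stime:local2}: around each base point $(t_0,\t_0)\in T_\e$, rescale both the metric and $u_\e$ by the homothety that sends a $g_\e$-ball of radius $\simeq \zeta_\e(t_0,\t_0)$ onto the unit ball in $\R^n$, apply a trivial $C^{0,\b}$ control on the rescaled error (whose coefficients are uniformly bounded in a fixed $C^\infty$ sense), and undo the rescaling, picking up the extra factor $\zeta_\e^\b$ that matches the H\"older contribution in the definition of the weighted seminorm. The only delicate point of the whole proof is the calibration of the transition location $t=\pm(2k/n)\log\e$: it is precisely tuned so that the two competing asymptotics on $T_{1,\e}\cup T_{2,\e}$ and on $T_{\Sig,\e}$ meet at the same order at the interface, which is what forces the exponent $(n-2k)(\d+(n+2k)/(2k))/n$ in the final bound.
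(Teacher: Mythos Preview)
Your argument follows exactly the route the paper takes (the paper merely asserts that the bound ``follows at once'' from the three regional estimates, so you are spelling out what the authors leave implicit). The computation on $T_{1,\e}$ and $T_{2,\e}$ is clean and correct, and the remark about the calibration of the interface at $t=\pm(2k/n)\log\e$ is exactly the point.

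There is one small slip in the $T_{\Sig,\e}$ step. The combined exponent of $\cosh t$ in the weighted expression is $\mu+n/k$ with $\mu=\d-\tfrac{n-2k}{2k}(2k-1)$, and this quantity is \emph{not} always nonnegative over the full range $\d\in\big(-\tfrac{n-2k}{2k},\tfrac{n-2k}{2k}\big)$; take for instance $k=2$, $n=10$, and $\d$ close to $-3/2$. When $\mu+n/k<0$, inserting the upper bound $\cosh t\lesssim\e^{-2k/n}$ produces a \emph{lower} bound on the weighted quantity, not an upper bound, so your displayed $T_{\Sig,\e}$ estimate is not literally valid in that regime. The fix is immediate: when $\mu+n/k<0$ the supremum over $T_{\Sig,\e}$ is attained at $t=0$ and equals $\e^{\,\mu+n-2k+2}=\e^{\,\d+(n-2k)/(2k)+2}$, and one checks directly that
\[
\d+\tfrac{n-2k}{2k}+2 \;-\; \tfrac{n-2k}{n}\Big(\d+\tfrac{n+2k}{2k}\Big)\;=\;\tfrac{2k\d+n+2k}{n}\;>\;0
\]
whenever $\d>-\tfrac{n-2k}{2k}$. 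So in this case too the $T_{\Sig,\e}$ contribution is dominated by the $T_{1,\e}\cup T_{2,\e}$ one. With this short case distinction added, your proof is complete and matches the paper's.
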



\

\subsection{Fixed point argument}

To simplify the notations of this subsection we define the two real numbers $\mu=\mu(n,k,\d)$ and $\nu= \nu(n,k,\d)$ as 
\bea
\mu \,\, := \,\, \d - \tfrac{(n-2k)(2k-1)}{2k} \\
\nu  \,\ := \,\, \tfrac{n-2k}{n} \big( \tfrac{n+2k}{2k}  + \d \big) 
\eea
since $\d$ varies in $\big(-\tfrac{n-2k}{2k} , \tfrac{n-2k}{2k}\big)$, we have that $\mu$ varies in $\big( 2k-n , - \tfrac{(n-2k)(k-1)}{k} \big)$ and $\nu$ varies in $\big( \tfrac{2(n-2k)}{n} ,\tfrac{n-2k}{k} \big)$.

\

To prove the convergence of the Newton iteration scheme \eqref{newton} we start by estimating $w_1$. Thanks to {\em a priori} estimate for the linearized equation and to the estimate of the {\em proper error} term, we immediately get
\be
\label{step1}
\nor{w_1}{C^{2,\b}_\d(M_\e)} & \leq & AC \, \e^\nu ,
\ee
where the positive constants $A=A(n,k,\d)>0$ and $C=C(n,k,\d)>0$ are the ones given in Lemma \ref{error estimate} and Proposition \ref{stime:global}, respectively. Since to achieve our goal it is important to keep track of the precise role played by these constants in the estimate, we point out that all through this section the letters $A$ and $C$ will represent the constants obtained in the estimate of Lemma \ref{error estimate} and Proposition \ref{stime:global}. 

\

We pass now to estimate the term $w_2$. From its definition it follows at once that
\be
\label{step2}
\nor{w_2}{C^{2,\b}_\d (M_\e)} & \leq & C \, \nor{\, \NN{\bg}{u_\e} \, + \, \Q{\bg}{u_\e}{w_1}   }{C^{0,\b}_\mu (M_\e)} \\
& \leq & AC \, \e^{\nu} \, + \, C \, \nor{\,  \Q{\bg}{u_\e}{w_1}   }{C^{0,\b}_\mu (M_\e)} . \nonumber
\ee
Now we need to estimate the quadratic remainder. Recalling the definition of the weighted norm we have
\bea
\nor{\,  \Q{\bg}{u_\e}{w_1}   }{C^{0,\b}_\mu (M_\e)}  & := &  \hbox{$\sum_{i=1}^2$} \nor{\,  \Q{\bg}{u_\e}{w_i}   }{C^{0,\b} (M_i \setminus B(p_i,1))} \, + \, 
\sup_{T_\e} (\e\cosh t)^\mu |\Q{\bg}{u_\e}{w_1}|  \\
& & + \, \sup_{(t,\t) \in T_\e} \bigg\{  \, (\e \cosh t)^{\mu}  \,    
\sup_{(t,\t)\neq(t',\t')} 
\frac{ |\Q{\bg}{u_\e}{w_1} (t,\t)-  \Q{\bg}{u_\e}{w_1} (t',\t')|}{|dist_{g_{\e}}((t,\t),(t',\t'))|^{\b}}
\bigg\}
\eea
The first term readily estimated as 
$$
\hbox{$\sum_{i=1}^2$} \nor{\,  \Q{\bg}{u_\e}{w_1}   }{C^{0,\b} (M_i \setminus B(p_i,1))} \,\, \leq \,\, D_0 \, \nor{w_1}{C^{2,\b}_\d(M_\e)} \,\, \leq \,\, ACD_0 \, \e^\nu ,
$$
where the positive constant $D_0>0$ only depends on $n,k$ and the $C^2$--norm of the coefficients of the metrics $g_1$ and $g_2$. We pass now to consider the term $(\e \cosh t)^\mu |\Q{\bg}{u_\e}{w_1}|$. Applying the conformal equivariance property, we get
\bea
(\e \cosh t)^\mu |\Q{\bg}{u_\e}{w_1}| 
& \leq & (\e \cosh t)^\mu \, \int_{0}^1 \big| \, \big[ \mathbb{L}_{\bg}(u_\e) \, - \, \mathbb{L}_{\bg}\big( u_\e (1+ s u_\e^{-1} w_1) \big) \, \big] \, [u_\e^{-1} w_1] \, \big| \, ds \\
& \leq & (\e \cosh t)^\mu \, \e^{\frac{n-2k}{2k}(2k-1)} \, \int_{0}^1 \big| \, \big[ \mathbb{L}_{\bg}(v_\Sig) \, - \, \mathbb{L}_{\bg} ( v_\Sig (1+ s u_\e^{-1} w_1) ) \, \big] \, [u_\e^{-1} w_1] \, \big| \, ds .
\eea
To estimate the right hand side on $T_\e$, we observe that there exists a positive constant $D_1>0$ only depending on $n$ and $k$ such that, for $j=0,1,2$, we have
$$
|\nabla^j_{\bg}(u_\e^{-1} w_1)|_{\bg} \, \leq \,D_1 \, (\e \cosh t)^{-\d - \frac{n-2k}{2k}}  \nor{w_1}{C^2_\d(M_\e)} \, \leq \, ACD_1 \,(\e \cosh t)^{-\d - \frac{n-2k}{2k}}  \, \e^{\nu} \,.
$$
Since $-\d - (n-2k)/2k < \nu$ we infer that the coefficients of the linear operator $\mathbb{L}_{\bg}(v_\Sig) \, - \, \mathbb{L}_{\bg} ( v_\Sig (1+ s u_\e^{-1} w_1) ) $ can be estimated on $T_\e$ as $\mathcal{O}\big( v_\Sig^{2k-1} \,  \, \big[\, | u_\e^{-1}w_1|  +   |\nabla_{\bg}(u_\e^{-1} w_1)|_{\bg}   +    |\nabla^2_{\bg}(u_\e^{-1} w_1)|_{\bg}   \,\big]\big) $.
We deduce that there exists a positive constant $D_2$ only depending on $n$ and $k$ such that
$$
\big| \, \big[ \mathbb{L}_{\bg}(v_\Sig) \, - \, \mathbb{L}_{\bg} ( v_\Sig (1+ s u_\e^{-1} w_1) ) \, \big] \, [u_\e^{-1} w_1] \, \big| \,\, \leq \,\, D_2 \, (\cosh t)^{\frac{n-2k}{2k}(2k-1)} \, (\e \cosh t)^{-2\d - \frac{n-2k}{2k}} \, \nor{w_1}{C^{2}_\d(M_\e)}^2.
$$
We end up with
$$
(\e \cosh t)^\mu |\Q{\bg}{u_\e}{w_1}| \,\, \leq \,\, D_3 \, (\e \cosh t)^{-\d-\frac{n-2k}{2k}} \, \nor{w_1}{C^{2}_\d(M_\e)}^2.
$$
Using the same argument one can deduce the analogous bound for the H\"older ratio, namely
$$
(\e \cosh t)^{\mu}  \,    
\sup_{(t,\t)\neq(t',\t')} 
\frac{ |\Q{\bg}{u_\e}{w_1} (t,\t)-  \Q{\bg}{u_\e}{w_1} (t',\t')|}{|dist_{g_{\e}}((t,\t),(t',\t'))|^{\b}}  \,\, \leq \,\, \, D_4 \, (\e \cosh t)^{-\d-\frac{n-2k}{2k}} \, \nor{w_1}{C^{2,\b}_\d(M_\e)}^2,
$$
for some positive constant $D_3$ and $D_4$ only depending on $n$ and $k$. Collecting these estimates one can conclude that the quadratic remainder $\Q{\bg}{u_\e}{w_1}$ verifies 
$$
\nor{\,  \Q{\bg}{u_\e}{w_1}   }{C^{0,\b}_\mu (M_\e)} \,\, \leq \,\, D  \cdot (\e \cosh t)^{-\d-\frac{n-2k}{2k}} \, \nor{w_1}{C^{2,\b}_\d(M_\e)}^2,
$$
where the positive constant $D>0$ only depends on $n,k$ and the $C^2$--norm of the coefficients of the metrics $g_1$ and $g_2$. Continuing the estimate in \eqref{step2}, we get
\bea
\nor{w_2}{C^{2,\b}_\d(M_\e)} & \leq & AC \, \e^\nu \, + \, CD \, (\e \cosh t)^{-\d-\frac{n-2k}{2k}} \, \nor{w_1}{C^{2,\b}_\d(M_\e)}^2 \\
&\leq & AC \, \e^\nu \, + \, A C^2 D \, (\e \cosh t)^{-\d-\frac{n-2k}{2k}} \, \e^\nu\, \nor{w_1}{C^{2,\b}_\d(M_\e)}\\
& = & AC \, \e^{\nu} \, + \, B \,  \nor{w_1}{C^{2,\b}_\d(M_\e)},
\eea
where in the second inequality we have used \eqref{step1} and we have set 
$$
B \, := \, A C^2 D \, (\e \cosh t)^{-\d-\frac{n-2k}{2k}} \, \e^\nu .
$$
Since $-\d - (n-2k)/2k < \nu$, there exists a real number $\e_0= \e_0 (n,k,\d,D)>0$ such that, for every $\e \in (0, \e_0]$, one can choose $B\leq\tfrac{1}{4}$. In general we obtain for every $ j\geq 1$
\bea
\nor{w_{j+1}}{C^{2,\b}_\d(M_\e)} & \leq & AC \, \e^\nu \,a_{j+1},
\eea
where the sequence $a_{j}$ is inductively defined as
$$
\begin{cases}
\,\,\,\,a_{1} \,\,:= 1\\
a_{j+1} := 1+ \tfrac{1}{4} \, a_{j}^{2}, \quad j\in\N.
\end{cases}
$$
Since $\sup_{j} a_{j} \leq 2$, one has
\bea
\nor{w_{j+1}}{C^{2,\b}_\d(M_\e)} & \leq & 2AC \, \e^\nu.
\eea

Exploiting once again the definition of the weighted norm, we obtain
\be
\label{bound}
\nor{w_{j+1}}{C^{2,\b}(M_\e)} \,\, \leq \,\,  E \, \e^{\nu-\d} \,\,, 
\ee
for some positive constant $E = E(n,k,\d,D) > 0$. From this inequality, we have that the $w_i$'s are equibounded in $C^{2,\b}(M_\e)$ and then, up to a subsequence, they converge in $C^2(M_\e)$ to a fixed point $w_{\e}$ for the problem \eqref{fixed point}. To conclude, we have that there exists a number $\e_{0}=\e_{0} (n,k,\d,D)>0$ such that for $\e \in (0,\e_0]$ the metrics 
$$
\widetilde{g}_\e  \,\, : = \,\, (1+u_\e^{-1}w_{\e})^{\frac{4k}{n-2k}} \, g_\e,
$$
where $g_\e$ are the explicit approximate solution  metrics given in Section \ref{s:as}, have positive constant $\sigma_k$--curvature equal to $2^{-k} {n \choose k}$. Finally we recall that by construction the approximate solutions metrics $g_\e$ were converging to the initial metric $g_i$ with respect to the $C^2$--topology on the compact subsets of $M_i \setminus \{p_i \}$, for $i=1,2$, as $\e\rightarrow 0$. 
On the other hand we have that 
\be
\label{small}
\nor{u_\e^{-1} \, w_\e}{C^{2}(M_\e)} & \leq & F\, \e^{\nu -\d -\frac{n-2k}{2k}} 
\ee
for some positive constant $F= F(n,k,\d,D)>0$. Since $\nu-\d-(n-2k)/2k > 0$ we have that also the exact solutions $\widetilde{g}_\e$ tend to the initial metric $g_i$ with respect to the $C^2$--topology on the compact subsets of $M_i \setminus \{p_i \}$, for $i=1,2$, as $\e\rightarrow 0$. 

\

Concerning the regularity of our solution $w_\e$, so far we have obtained that it belongs to $C^2(M_\e)$. On the other hand, as observed in Remark \ref{high}, it is possible to obtain uniform $C^{m,\b}_\d$--estimates for solutions to the linearized equation, for every $m\in\N$. Since the {\em proper error} term which appears in first step of the Newton iteration scheme is clearly smooth by construction, one can extend \eqref{bound} to
$$
\nor{w_{j+1}}{C^{m,\b}(M_\e)} \,\, \leq \,\,  E \, \e^{\nu-\d} \,\,, 
$$ 
where the positive constant $E$ may possibly depend also on $m$. The fact that $m$ is arbitrary in $\N$ implies that $w_\e$ is smooth.

\

We observe now $\widetilde{g}_{\e}$ lies in the positive cone $\Gamma_{k}^{+}$, as stated in Theorem \ref{main}. To see this fact we just need to show that $\sigma_j(\widetilde{g}_\e^{-1}\, A_{\widetilde{g}_\e}) >0$ for every $j=1, \ldots , k-1$, since $\widetilde{g}_\e$ has constant $\sigma_{k}$--curvature equal to $2^{-k} {n \choose k} $. This follows from \eqref{small} together with the fact that the approximate solutions $g_\e$'s belong to $\Gamma^+_{k-1}$, for $\e$ small enough, see Lemma \ref{SGUAN}. 

\

To conclude, we need to discuss how to remove Assumption \ref{ass}. Going through the proof it can be seen that all the analysis (uniform {\em a priori} estimate, estimate of the error term, etc.) is essentially based on blow--up techniques. For instance in the linear analysis this fact has allowed us to overcome the computational difficulty of writing down a global expression for the linearized operator about the approximate solution metrics $g_\e$ (which due to the fully nonlinear nature of our problem is rather intricate for $k>1$), letting us concentrate only on its limit behavior around the blow--up points. As a consequence one can realize that the only important features of our approximate solutions are the ones which become relevant in the limit for $\e \rightarrow 0$. It is in this limit for example that the use of the $\sigma_k$--Schwarzschild metric as a model metric on the neck reveals to be a clever choice. Having this in mind and looking at the expressions \eqref{gep} and \eqref{geps}, it is now straightforward to verify that all the limit features of our approximates solutions are not affected when the Assumption \ref{ass} is not in force, since the coefficients $a$'s which measure the discrepancy from the model metric in the general construction are of the same size of the $c$'s in \ref{gep}. This shows that the linear analysis issues still hold true in the general case. Concerning the estimate of the error term, which is the crucial step in the implementation of the Newton scheme once the uniform {\em a priori} estimates are provided, one can see reasoning as above that the only place where the {\em proper error} may possibly have a worse behavior is in the regions of the type $[\log \e, \log \e +C]\times \Sp$, for some positive constant $C>0$. In fact  the general $g_\e$'s are close enough to the model $\sigma_k$-Schwarzschild metric elsewhere and one can reproduce the desired estimate, arguing as in the proof of Lemma \ref{error estimate}. On the other hand, using the fact that $\NN{g_1}{1} \, = \, 0$, one has that
\bea
\NN{\bg}{u_\e} 
& = & u_1^{\frac{2kn}{n-2k}} \, \big\{\, \L{g_1}{1}{e^{\frac{n-2k}{k}t}}   \, + \, \Q{g_1}{1}{e^{\frac{n-2k}{k}t}} \, \big\} \,\,\, \simeq \,\,\, \e^{\frac{n-2k}{k}} \quad \quad \hbox{in} \,\, [\log \e , \log \e + C] \times \Sp .
\eea
Thus the estimates of the {\em proper error} as well are not affected by the removal of Assumption \ref{ass}, and we can definitely drop it out.
This concludes the proof of Theorem \ref{main}.

\

\section{Obstructions to the connected sum for $2k \geq n$}
\label{obstr}

We present now briefly two counterexamples to the possibility to extend Theorem \ref{main} in the case where $2k\geq n$.

\

{\bf Counterexample 1:} $n=3$, $k=2$. Let $(M_i,g_i) = (\R\mathbb{P}^3, g_{std})$, $i=1,2$, where $g_{std}$ is the standard metric on $\R\mathbb{P}^3$, i.e., the one who lifts to the round metric of $\mathbb{S}^3$. Clearly we have that $g_{std} \in \Gamma^+_2$ and has positive constant $\sigma_2$--curvature equal to $3/4$. Moreover $(\R\mathbb{P}^3, g_{std})$ is {\em non degenerate}. In fact, if $w$ is a function defined on $\R\mathbb{P}^3$ which verifies
$$ 
\L{g_{std}}{1}{ w} \,\, = \,\, 0\quad\quad\hbox{in}\,\,\R\mathbb{P}^3
$$
then it lifts to a function $\widetilde{w}$ defined on the universal cover $\mathbb{S}^3$ such that $\widetilde{w} (p) = \widetilde{w}(-p)$ for every $p \in \mathbb{S}^3$ and 
$$
\big( \, \D_{\mathbb{S}^3} + 3 \,\big) \, \widetilde{w} \,\, = \,\, 0 \quad\quad\hbox{in}\,\,\mathbb{S}^3.
$$
This clearly implies $\widetilde{w} 
\equiv 0$, since the solutions to this equation are linear combinations of the restriction to $\mathbb{S}^3$ of the coordinate functions of $\R^4$. Hence, $w \equiv 0$. At this point all the hypothesis of Theorem \ref{main} are in force, with the only exception of the inequality $2k<n$. On the other hand the connected sum $\R \mathbb{P}^3 \sharp \, \R \mathbb{P}^3$ cannot be endowed with a $2$--admissible metric. In fact such a metric would have positive Ricci curvature, as shown in \cite{gvw}, and this would contradict for instance Hamilton's theorem for $3$--manifolds \cite{hamilton}. 

\

{\bf Counterexample 2:} $n=4$, $k=2$. Let $(M_i,g_i) = (\R\mathbb{P}^4, g_{std})$, $i=1,2$, where $g_{std}$ is the standard metric on $\R\mathbb{P}^4$ as above. Clearly we have that $g_{std} \in \Gamma^+_2$ and has positive constant $\sigma_2$--curvature equal to $3/2$. The same argument as in Counterexample 1 shows that $(\R\mathbb{P}^4, g_{std})$ is {\em non degenerate}. If the Theorem \ref{main} would apply to this situation, we would end up with a locally conformally flat $2$--admissible metric $\widetilde{g}$ on the connected sum $M^4\, := \,\R \mathbb{P}^4 \sharp \, \R \mathbb{P}^4$, since the locally conformally flatness is clearly preserved by both the explicit construction of the approximate solutions and the conformal perturbation that we use to get the exact solutions. On the other hand a conformally flat $2$-admissible metric on a $4$--manifold has positive scalar curvature and fulfills the pinching conditions 
$$
2\,\, \big|\stackrel{\circ}{Ric}_{\widetilde{g}} \big| \,\, < \,\, (1/6) \,\, R_{\widetilde{g}}^2 \,\,.
$$
The Margerin's result \cite{margerin} implies now that $M^4$ is diffeomorphic to either $\mathbb{S}^4$ or $\R\mathbb{P}^4$.

\

\

\end{document}